\def\BibTeX{{\rm B\kern-.05em{\sc i\kern-.025em b}\kern-.08em
    T\kern-.1667em\lower.7ex\hbox{E}\kern-.125emX}}
\newcommand{\mb}[1]{\mathbf{{#1}}}
\newtheorem{lemma}{Lemma}
\newtheorem{theorem}{Theorem}
\newtheorem{rem}{Remark}
\newtheorem{assumption}{Assumption}
\newtheorem{proposition}{Proposition}
\newtheorem{example}{Example}
\begin{document}
	
\title{\LARGE\bf A Unifying Approximate Method of Multipliers \\for Distributed Composite Optimization}

\author{Xuyang Wu and Jie Lu\thanks{X. Wu is with the Division of Decision and Control Systems, KTH Royal Institute of Technology, SE-100 44 Stockholm, Sweden. Email: {\tt xuyangw@kth.se}.}
\thanks{J. Lu is with the School of Information Science and Technology, ShanghaiTech University, 201210 Shanghai, China. Email: {\tt lujie@shanghaitech.edu.cn}.}
\thanks{This work has been supported by the National Natural Science Foundation of China under grant 61603254.}}
\maketitle

\begin{abstract}
This paper investigates solving convex composite optimization on an undirected network, where each node, privately endowed with a smooth component function and a nonsmooth one, is required to minimize the sum of all the component functions throughout the network. To address such a problem, a general Approximate Method of Multipliers (AMM) is developed, which attempts to approximate the Method of Multipliers by virtue of a surrogate function with numerous options. We then design the possibly nonseparable, time-varying surrogate function in various ways, leading to different distributed realizations of AMM. We demonstrate that AMM generalizes more than ten state-of-the-art distributed optimization algorithms, and certain specific designs of its surrogate function result in a variety of new algorithms to the literature. Furthermore, we show that AMM is able to achieve an $O(1/k)$ rate of convergence to optimality, and the convergence rate becomes linear when the problem is locally restricted strongly convex and smooth. Such convergence rates provide new or stronger convergence results to many prior methods that can be viewed as specializations of AMM.
\end{abstract}

\begin{IEEEkeywords}
Distributed optimization, convex composite optimization, the Method of Multipliers.
\end{IEEEkeywords}

\section{Introduction}\label{sec:intro}

This paper addresses the following convex composite optimization problem:
\begin{equation}\label{eq:prob}
\begin{split}
\underset{x\in \mathbb{R}^d}{\operatorname{minimize}} ~&~ \sum_{i=1}^N \left(f_i(x)+h_i(x)\right),
\end{split}
\end{equation}
where each $f_i:\mathbb{R}^d\rightarrow \mathbb{R}$ is convex and has a Lipschitz continuous gradient, and each $h_i:\mathbb{R}^d\rightarrow \mathbb{R}\cup\{+\infty\}$ is convex and can be non-differentiable. Notice that $f_i$ is allowed to be a zero function. Also, if $h_i$ contains an indicator function $\mathcal{I}_{X_i}$ with respect to a closed convex set $X_i\subset\mathbb{R}^d$, i.e., $\mathcal{I}_{X_i}(x)=0$ if $x\in X_i$ and $\mathcal{I}_{X_i}(x)=+\infty$ otherwise, then problem~\eqref{eq:prob} turns into a nonsmooth, constrained convex program.

We consider solving problem~\eqref{eq:prob} in a \emph{distributed} way over a network modeled as a connected, undirected graph $\mathcal{G}=(\mathcal{V}, \mathcal{E})$, where $\mathcal{V}=\{1,\ldots,N\}$ is the vertex set and $\mathcal{E}\subseteq \{\{i,j\}|~i,j\in \mathcal{V},~i\ne j\}$ is the edge set. We suppose each node $i\in\mathcal{V}$ possesses two private component functions $f_i$ and $h_i$, aims at solving \eqref{eq:prob}, and only exchanges information with its neighbors denoted by the set $\mathcal{N}_i=\{j|~\{i,j\}\in \mathcal{E}\}$. Applications of such a distributed composite optimization problem include control of multi-agent systems, robust estimation by sensor networks, resource allocation in smart grids, decision making for swarm robotics, as well as distributed learning \cite{Nedic15b}.

To date, a large body of distributed optimization algorithms have been proposed to solve problem \eqref{eq:prob} or its special cases. A typical practice is to reformulate \eqref{eq:prob} into a constrained problem with a separable objective function and a consensus constraint to be relaxed. For example, the inexact methods \cite{Bajovic17,Mansoori20} solve this reformulated problem by penalizing the consensus constraint in the objective function, and the primal-dual methods \cite{Shi14,ShiW15,LiuYH19,LiZ19,Notarnicola17,Makhdoumi17,Zeng17,Aybat18,ShiW15a,Xu18,Lei16,Wu19,Hong17,Bianchi16,Mokhtari16,Mokhtari16a,WuX20,Xu20,alghunaim20} relax the consensus constraint through dual decomposition techniques. Another common approach is to emulate the centralized subgradient/gradient methods in a decentralized manner, such as the distributed subgradient methods \cite{Nedic15,Xi17,Yuan18} and the distributed gradient-tracking-based methods \cite{Nedic17,QuG18,PuS20,Xin20,Varagnolo16,Lorenzo16,Daneshmand19,Scutari19,Notarnicola2020} where \cite{Lorenzo16,Daneshmand19,Scutari19,Notarnicola2020} utilize surrogate functions in order to realize the decentralized emulation.

Despite the growing literature, relatively few methods manage to tackle the \emph{general form} of problem~\eqref{eq:prob} under distributed settings, among which only the first-order methods \cite{Notarnicola17,Wu19,LiuYH19,Zeng17,Bianchi16,Makhdoumi17,Aybat18,ShiW15a,Xu18,Hong17,LiZ19} are guaranteed to converge to optimality with constant step-sizes. Here, \cite{Notarnicola17,Wu19} rely on strong convexity of the problem, and \cite{LiuYH19,Zeng17,Bianchi16} only prove asymptotic convergence for non-strongly convex problems. In contrast, the remaining works \cite{Makhdoumi17,Aybat18,ShiW15a,Xu18,Hong17,LiZ19} establish $O(1/k)$ convergence rates in solving \eqref{eq:prob}. Although these algorithms are developed using different rationales, we discover that the majority of them, i.e., \cite{Makhdoumi17,Aybat18,ShiW15a,Xu18,Hong17}, can indeed be thought to originate from the Method of Multipliers \cite{Boyd11}. 

The Method of Multipliers is a seminal (centralized) optimization method, and one of its notable variants is the Alternating Direction Method of Multipliers (ADMM) \cite{Boyd11}. In this paper, we develop a novel paradigm of solving \eqref{eq:prob} via approximating the behavior of the Method of Multipliers, called \emph{Approximate Method of Multipliers} (AMM). The proposed AMM adopts a possibly time-varying surrogate function to take the place of the smooth objective function at every minimization step in the Method of Multipliers, facilitating abundant designs of distributed algorithms for solving \eqref{eq:prob} in a fully decentralized fashion. Unlike the typical separable surrogate functions for distributed optimization such as those in \cite{Lorenzo16,Daneshmand19,Scutari19,Notarnicola2020}, our surrogate function can be nonseparable in some distributed versions of AMM. It also admits certain function forms that cannot be included by \cite{Lorenzo16,Daneshmand19,Scutari19,Notarnicola2020}.

To enable distributed implementation of AMM, we first opt for a Bregman-divergence-type surrogate function, leading to a class of distributed realizations of AMM referred to as \emph{Distributed Approximate Method of Multipliers} (DAMM). We also utilize convex conjugate and graph topologies to design the surrogate function, and thus construct two additional sets of distributed realizations of AMM, referred to as DAMM-SC and DAMM-SQ, for solving smooth convex optimization (i.e., \eqref{eq:prob} with $h_i\equiv0$ $\forall i\in\mathcal{V}$). We concretely exemplify such realizations of AMM, so that new distributed proximal/second-order/gradient-tracking methods can be obtained. Apart from that, AMM and its distributed realizations also unify a wide range of state-of-the-art distributed first-order and second-order algorithms \cite{ShiW15,Xu18,Mokhtari16a,Hong17,ShiW15a,Aybat18,Shi14,Makhdoumi17,Lei16,Nedic17,Mokhtari16} for solving \eqref{eq:prob} or its special cases, as all of them can be cast into the form of AMM/DAMM/DAMM-SQ. This offers a unifying perspective for understanding the nature of these existing methods with various design rationales.

We show that AMM allows the nodes to reach a consensus and converge to the optimal value at a rate of $O(1/k)$, either with a particular surrogate function type or under a local restricted strong convexity condition on $\sum_{i\in \mathcal{V}} f_i(x)$. For all the algorithms that are able to solve the general form of problem~\eqref{eq:prob} in a distributed way, the convergence rates of AMM and the algorithms in \cite{Makhdoumi17,Aybat18,ShiW15a,Xu18,Hong17,LiZ19} achieve the best order of $O(1/k)$, while the algorithms in \cite{Makhdoumi17,Aybat18,ShiW15a,Xu18,Hong17} are indeed special cases of AMM with that particular surrogate function type and the convergence rate in \cite{LiZ19} is in terms of an implicit measure of optimality error. Moreover, we show that when problem~\eqref{eq:prob} is both smooth and locally restricted strongly convex, AMM enables the nodes to attain the optimum at a linear rate. Unlike most existing works, this linear convergence is established in no need of global strong convexity. Naturally, our analysis on AMM yields new convergence results and relaxes some problem assumptions with no degeneration of the convergence rate order for quite a few existing distributed optimization methods that can be generalized by AMM.

The outline of the paper is as follows: Section~\ref{sec:DAMM} describes the proposed AMM and the distributed designs. Section~\ref{sec:connection} demonstrates that AMM generalizes a number of prior distributed optimization methods. Section~\ref{sec:convergence} discusses the convergence results of AMM under various assumptions, and Section~\ref{sec:numerical} presents the comparative simulation results. Finally, Section~\ref{sec:conclusion} concludes the paper.

\textbf{Notation and Definition}: We use $A=(a_{ij})_{n\times n}$ to denote an $n\times n$ real matrix whose $(i,j)$-entry, denoted by $[A]_{ij}$, is equal to $a_{ij}$. In addition, $\operatorname{Null}(A)$ is the null space of $A$, $\operatorname{Range}(A)$ is the range of $A$, and $\|A\|$ is the spectral norm of $A$. Besides, $\operatorname{diag}(D_1,\ldots,D_n)\in\mathbb{R}^{nd\times nd}$ represents the block diagonal matrix with $D_1,\ldots,D_n\in \mathbb{R}^{d\times d}$ being its diagonal blocks. Also, $\mathbf{0}$, $\mathbf{1}$, and $\mathbf{O}$ represent a zero vector, an all-one vector, and a zero square matrix of proper dimensions, respectively, and $I_n$ represents the $n\times n$ identity matrix. For any two matrices $A$ and $B$, $A\otimes B$ is the Kronecker product of $A$ and $B$. If $A$ and $B$ are square matrices of the same size, $A\succeq B$ means $A-B$ is positive semidefinite and $A\succ B$ means $A-B$ is positive definite. For any $A=A^T\in\mathbb{R}^{n\times n}$, $\lambda_{\max}(A)$ denotes the largest eigenvalue of $A$. For any $z\in \mathbb{R}^n$ and $A\succeq\mathbf{O}$, $\|z\|=\sqrt{z^Tz}$ and $\|z\|_A=\sqrt{z^TAz}$. For any countable set $S$, we use $|S|$ to represent its cardinality. For any convex function $f:\mathbb{R}^d\rightarrow\mathbb{R}$, $\partial f(x)\subset\mathbb{R}^d$ denotes its subdifferential (i.e., the set of subgradients) at $x$. If $f$ is differentiable, $\partial f(x)$ only contains the gradient $\nabla f(x)$.

Given a convex set $X\subseteq \mathbb{R}^d$, a function $f:\mathbb{R}^d \rightarrow \mathbb{R}$ is said to be \emph{strongly convex} on $X$ with \emph{convexity parameter} $\sigma>0$ (or simply $\sigma$-strongly convex on $X$) if $\langle g_x-g_y, x-y\rangle \ge\sigma\|x-y\|^2$ $\forall x,y\in X$ $\forall g_x\in\partial f(x)$ $\forall g_y\in \partial f(y)$. We say $f$ is \emph{(globally) strongly convex} if it is strongly convex on $\mathbb{R}^d$. Given $\tilde{x}\in X$, $f$ is said to be \emph{restricted strongly convex} with respect to $\tilde{x}$ on $X$ with convexity parameter $\sigma>0$ if $\langle g_x-g_{\tilde{x}}, x-\tilde{x}\rangle \ge \sigma\|x-\tilde{x}\|^2$ $\forall x\in X$ $\forall g_x\in\partial f(x)$ $\forall g_{\tilde{x}}\in\partial f(\tilde{x})$. 
Given $L\ge0$, $f$ is said to be \emph{$L$-smooth} if it is differentiable and its gradient is Lipschitz continuous with Lipschitz constant $L$, i.e., $\|\nabla f(x)-\nabla f(y)\| \le L\|x-y\|$ $\forall x,y\in \mathbb{R}^d$.

\section{Approximate Method of Multipliers and Distributed Designs}\label{sec:DAMM}

This section develops distributed optimization algorithms for solving problem~\eqref{eq:prob} over the undirected, connected graph $\mathcal{G}$, with the following formalized problem assumption.

\begin{assumption}\label{asm:problem}
For each $i\in\mathcal{V}$, $f_i:\mathbb{R}^d\rightarrow \mathbb{R}$ and $h_i:\mathbb{R}^d\rightarrow \mathbb{R}\cup\{+\infty\}$ are convex. In addition, $f_i$ is $M_i$-smooth for some $M_i>0$. Moreover, there exists at least one optimal solution $x^\star$ to problem \eqref{eq:prob}.
\end{assumption}

\subsection{Approximate Method of Multipliers}\label{ssec:AMM}

We first propose a family of optimization algorithms that effectively approximate the Method of Multipliers \cite{Boyd11} and serve as a cornerstone of developing distributed algorithms for solving problem~\eqref{eq:prob}. 

Let each node $i\in \mathcal{V}$ keep a copy $x_i\in\mathbb{R}^d$ of the global decision variable $x\in\mathbb{R}^d$ in problem~\eqref{eq:prob}, and define
\begin{displaymath}
f(\mathbf{x}):=\textstyle{\sum_{i\in \mathcal{V}}} f_i(x_i),\quad h(\mathbf{x}):=\textstyle{\sum_{i\in \mathcal{V}}} h_i(x_i),
\end{displaymath}
where $\mathbf{x} = (x_1^T, \ldots, x_N^T)^T\in\mathbb{R}^{Nd}$. Then, problem~\eqref{eq:prob} can be equivalently transformed into
\begin{equation}\label{eq:equivprob}
\begin{split}
\underset{\mathbf{x}\in \mathbb{R}^{Nd}}{\operatorname{minimize}}  ~~&~  f(\mathbf{x})+h(\mathbf{x})\\
\operatorname{subject~to}  ~&~ \mathbf{x} \in S:=\{\mathbf{x}\in\mathbb{R}^{Nd}|~x_1 = \cdots = x_N\}.
\end{split}
\end{equation}
Next, let $\tilde{H}\in\mathbb{R}^{Nd\times Nd}$ be such that $\tilde{H}=\tilde{H}^T$, $\tilde{H}\succeq \mathbf{O}$, and $\operatorname{Null}(\tilde{H})=S$. It has been shown in \cite{Mokhtari16} that the consensus constraint $\mathbf{x}\in S$ in problem~\eqref{eq:equivprob} is equivalent to $\tilde{H}^{\frac{1}{2}}\mathbf{x}=\mathbf{0}$. Therefore, \eqref{eq:equivprob} is identical to
\begin{equation}\label{eq:eqprob}
\begin{split}
\underset{\mathbf{x}\in \mathbb{R}^{Nd}}{\operatorname{minimize}}  ~~&~  f(\mathbf{x})+h(\mathbf{x})\\
\operatorname{subject~to}  ~&~ \tilde{H}^{\frac{1}{2}}\mathbf{x} = \mathbf{0}.
\end{split}
\end{equation}
Problems~\eqref{eq:prob}, \eqref{eq:equivprob}, and \eqref{eq:eqprob} have the same optimal value. Also, $x^\star\in \mathbb{R}^d$ is an optimum of \eqref{eq:prob} if and only if $\mathbf{x}^\star=((x^\star)^T, \ldots, (x^\star)^T)^T\in \mathbb{R}^{Nd}$ is an optimum of \eqref{eq:equivprob} and \eqref{eq:eqprob}.

Consider applying the Method of Multipliers \cite{Boyd11} to solve problem~\eqref{eq:eqprob}, which gives
\begin{align*}
\mathbf{x}^{k+1} &\in \operatorname{\arg\;\min}_{\mathbf{x}\in\mathbb{R}^{Nd}}f(\mathbf{x})\!+h(\mathbf{x})\!+\frac{\rho}{2}\|\mathbf{x}\|_{\tilde{H}}^2\!+(\mathbf{v}^k)^T\tilde{H}^{\frac{1}{2}}\mathbf{x},\displaybreak[0]\\
\mathbf{v}^{k+1} &= \mathbf{v}^k+\rho\tilde{H}^{\frac{1}{2}}\mathbf{x}^{k+1}.
\end{align*}
Here, $\mathbf{x}^{k}\in\mathbb{R}^{Nd}$ is the primal variable at iteration $k\ge0$, which is updated by minimizing an augmented Lagrangian function with the penalty $\frac{\rho}{2}\|\mathbf{x}\|_{\tilde{H}}^2$, $\rho>0$ on the consensus constraint $\tilde{H}^{\frac{1}{2}}\mathbf{x} = \mathbf{0}$. In addition, $\mathbf{v}^{k}\in\mathbb{R}^{Nd}$ is the dual variable, whose initial value $\mathbf{v}^0$ can be arbitrarily set. 

Although the Method of Multipliers may be applied to solve \eqref{eq:eqprob} with properly selected parameters, it is not implementable in a distributed fashion over $\mathcal{G}$, even when the problem reduces to linear programming. To address this issue, our strategy is to first derive a paradigm of approximating the Method of Multipliers and then design its distributed realizations.

Our approximation approach is as follows: Starting from any $\mathbf{v}^0\in\mathbb{R}^{Nd}$, let
\begin{align}
\mathbf{x}^{k+1} &= \underset{\mathbf{x}\in\mathbb{R}^{Nd}}{\operatorname{\arg\;\min}}\;u^k(\mathbf{x})\!+\!h(\mathbf{x})\!+\!\frac{\rho}{2}\|\mathbf{x}\|_H^2\!+\!(\mathbf{v}^k)^T\tilde{H}^{\frac{1}{2}}\mathbf{x},\label{eq:AMMprimal}\displaybreak[0]\\
\mathbf{v}^{k+1} &= \mathbf{v}^k+\rho\tilde{H}^{\frac{1}{2}}\mathbf{x}^{k+1},\quad\forall k\ge0.\label{eq:AMMdual}
\end{align}
Compared to the Method of Multipliers, we adopt the same dual update \eqref{eq:AMMdual} but construct a different primal update \eqref{eq:AMMprimal}. In \eqref{eq:AMMprimal}, we use a possibly time-varying surrogate function $u^k:\mathbb{R}^{Nd}\rightarrow \mathbb{R}$ to replace $f(\mathbf{x})$ in the primal update of the Method of Multipliers, whose conditions are imposed in Assumption~\ref{asm:ukassumption} below. Additionally, to introduce more flexibility, we use a different weight matrix $H\in\mathbb{R}^{Nd\times Nd}$ to define the penalty term $\frac{\rho}{2}\|\mathbf{x}\|_H^2$, $\rho>0$. We suppose $H$ has the same properties as $\tilde{H}$, i.e., $H=H^T\succeq\mathbf{O}$ and $\operatorname{Null}(H)=S$. 

\begin{assumption}\label{asm:ukassumption}
The functions $u^k$ $\forall k\ge0$ satisfy the following:
\begin{enumerate}[(a)]
\item $u^k\,\forall k\!\ge\!0$ are convex and twice continuously differentiable.
\item $u^k+\frac{\rho}{2}\|\cdot\|_H^2$ $\forall k\ge 0$ are strongly convex, whose convexity parameters are \emph{uniformly} bounded from below by some positive constant. 
\item $\nabla u^k$ $\forall k\ge0$ are Lipschitz continuous, whose Lipschitz constants are \emph{uniformly} bounded from above by some nonnegative constant.
\item $\nabla u^k(\mathbf{x}^k) = \nabla f(\mathbf{x}^k)$ $\forall k\ge0$, where $\mathbf{x}^0$ can be arbitrarily given and $\mathbf{x}^k$, $k\ge1$ is generated by \eqref{eq:AMMprimal}--\eqref{eq:AMMdual}.
\end{enumerate}
\end{assumption}

The strong convexity condition in Assumption~\ref{asm:ukassumption}(b) guarantees that $\mathbf{x}^{k+1}$ in \eqref{eq:AMMprimal} is well-defined and uniquely exists. Assumption~\ref{asm:ukassumption}(d) is the key to making \eqref{eq:AMMprimal}--\eqref{eq:AMMdual} solve problem~\eqref{eq:eqprob}. To explain this, note that \eqref{eq:AMMprimal} is equivalent to finding the unique $\mathbf{x}^{k+1}$ satisfying
\begin{align}
-\nabla u^k(\mathbf{x}^{k+1})-\rho H\mathbf{x}^{k+1}-\tilde{H}^\frac{1}{2}\mathbf{v}^k\in\partial h(\mathbf{x}^{k+1}).\label{eq:AMMprimaloptimalitycondition}
\end{align}
Let $(\mathbf{x}^\star,\mathbf{v}^\star)$ be a primal-dual optimal solution pair of problem~\eqref{eq:eqprob}, which satisfies
\begin{align}\label{eq:zgs_opt}
-\tilde{H}^\frac{1}{2}\mathbf{v}^\star-\nabla f(\mathbf{x}^\star)\in\partial h(\mathbf{x}^\star).
\end{align}
If $(\mathbf{x}^k,\mathbf{v}^k)=(\mathbf{x}^\star,\mathbf{v}^\star)$, then $\mathbf{x}^{k+1}$ has to be $\mathbf{x}^\star$ because of Assumption~\ref{asm:ukassumption}(d), $H\mathbf{x}^\star=\mathbf{0}$, \eqref{eq:zgs_opt}, and the uniqueness of $\mathbf{x}^{k+1}$ in \eqref{eq:AMMprimaloptimalitycondition}. It follows from \eqref{eq:AMMdual} and $\tilde{H}^{\frac{1}{2}}\mathbf{x}^\star=\mathbf{0}$ that $\mathbf{v}^{k+1}=\mathbf{v}^\star$. Therefore, $(\mathbf{x}^\star,\mathbf{v}^\star)$ is a \emph{fixed point} of \eqref{eq:AMMprimal}--\eqref{eq:AMMdual}. The remaining conditions in Assumption~\ref{asm:ukassumption} will be used for convergence analysis later in Section~\ref{sec:convergence}.

The paradigm described by \eqref{eq:AMMprimal}--\eqref{eq:AMMdual} and obeying Assumption~\ref{asm:ukassumption} is called \emph{Approximate Method of Multipliers} (AMM). As there are numerous options of the surrogate function $u^k$, AMM unifies a wealth of optimization algorithms, including a variety of existing methods (cf. Section~\ref{sec:connection}) and many brand new algorithms. Moreover, since Assumption~\ref{asm:ukassumption} allows $u^k$ to have a more favorable structure than $f$, AMM with appropriate $u^k$'s may induce a prominent reduction in computational cost compared to the Method of Multipliers. In the sequel, we will provide various options of $u^k$, which give rise to a series of \emph{distributed} versions of AMM.

A number of existing works \cite{Facchinei15,Facchinei17,Hong17a,Scutari17,Cannelli19,Lorenzo16,Daneshmand19,Scutari19,Notarnicola2020} also introduce surrogate functions to optimization methods, among which \cite{Lorenzo16,Daneshmand19,Scutari19,Notarnicola2020} develop distributed algorithms and can address partially nonconvex problems on time-varying networks. The advantages of AMM as well as its differences from the algorithms in \cite{Lorenzo16,Daneshmand19,Scutari19,Notarnicola2020} are highlighted as follows: 

i) The algorithms carrying surrogate functions in \cite{Lorenzo16,Daneshmand19,Scutari19,Notarnicola2020} are (primal) gradient-tracking-based methods. In contrast, AMM incorporates a surrogate function into a primal-dual framework, so that AMM is inherently different from the algorithms in \cite{Lorenzo16,Daneshmand19,Scutari19,Notarnicola2020} and requires new analysis tools. 

ii) The surrogate function conditions in Assumption~\ref{asm:ukassumption} intersect with but still differ from those in \cite{Lorenzo16,Daneshmand19,Scutari19,Notarnicola2020}. For instance, when $f$ is twice continuously differentiable, $u^k(\mathbf{x})=\langle \nabla f(\mathbf{x}^k),\mathbf{x}\rangle+\frac{1}{2}(\mathbf{x}-\mathbf{x}^k)^T(\nabla^2 f(\mathbf{x}^k)+\epsilon I)(\mathbf{x}-\mathbf{x}^k)$ with $\epsilon>0$ meets Assumption~\ref{asm:ukassumption} but cannot be included by \cite{Lorenzo16,Daneshmand19,Scutari19,Notarnicola2020}.

iii) To enable distributed implementation, the surrogate functions in \cite{Lorenzo16,Daneshmand19,Scutari19,Notarnicola2020} need to be fully separable in the sense that they can be written as the sum of $N$ functions with independent variables. In contrast, AMM allows $u^k$ to be densely coupled under proper design yet can still be executed in a distributed fashion (cf. Sections~\ref{ssec:smooth} and~\ref{sssec:ESOM}). This may lead to more diverse algorithm design.

iv) In \cite{Lorenzo16,Daneshmand19,Scutari19,Notarnicola2020}, it is required that the global nonsmooth objective function be accessible to every node, while problem~\eqref{eq:prob} only assigns a local nonsmooth component $h_i$ to each node $i$, which is more applicable to scenarios concerning privacy and makes the analysis more challenging.

\subsection{Distributed Approximate Method of Multipliers}\label{ssec:DAMM}

This subsection lays out the parameter designs of AMM for distributed implementations.

We first apply the following change of variable to AMM: 
\begin{align}
\mathbf{q}^k=\tilde{H}^{\frac{1}{2}}\mathbf{v}^k, \quad k\ge 0.\label{eq:q=Hv}
\end{align}
Then, AMM \eqref{eq:AMMprimal}--\eqref{eq:AMMdual} can be rewritten as
\begin{align}
\mathbf{x}^{k+1} &=\underset{\mathbf{x}\in\mathbb{R}^{Nd}}{\operatorname{\arg\;\min}}~u^k(\mathbf{x})+h(\mathbf{x})+\frac{\rho}{2}\|\mathbf{x}\|_H^2+(\mathbf{q}^k)^T\mathbf{x},\label{eq:ouralgqformprimal}\displaybreak[0]\\
\mathbf{q}^{k+1} &= \mathbf{q}^k+\rho \tilde{H}\mathbf{x}^{k+1}.\label{eq:ouralgqformdual}
\end{align}
Moreover, note that
\begin{align*}
&\operatorname{Range}(\tilde{H}^{\frac{1}{2}})=\operatorname{Range}(\tilde{H})\displaybreak[0]\\
&=S^\bot:=\{\mathbf{x}\in\mathbb{R}^{Nd}|~x_1+\cdots+x_N=\mathbf{0}\},
\end{align*}
where $S^\bot$ is the orthogonal complement of $S$ in \eqref{eq:equivprob}. Hence, \eqref{eq:q=Hv} requires $\mathbf{q}^k\in S^\bot$ $\forall k\ge0$, which, due to \eqref{eq:ouralgqformdual}, can be ensured simply by the following initialization:
\begin{align}
\mathbf{q}^0\in S^\bot.\label{eq:q0}
\end{align}
Therefore, \eqref{eq:ouralgqformprimal}--\eqref{eq:q0} is an equivalent form of AMM.

Next, partition the primal variable $\mathbf{x}^k$ and the dual variable $\mathbf{q}^k$ in \eqref{eq:ouralgqformprimal}--\eqref{eq:q0} as $\mathbf{x}^k=((x_1^k)^T,\ldots,(x_N^k)^T)^T$ and $\mathbf{q}^k=((q_1^k)^T,\ldots,(q_N^k)^T)^T$. Suppose each node $i\in\mathcal{V}$ maintains $x_i^k\in\mathbb{R}^d$ and $q_i^k\in\mathbb{R}^d$. Clearly, the nodes manage to collectively meet \eqref{eq:q0} by setting, for instance, $q_i^0=\mathbf{0}$ $\forall i\in\mathcal{V}$. Below, we discuss the selections of $u^k$, $H$, and $\tilde{H}$ for the sake of distributed implementations of \eqref{eq:ouralgqformprimal} and \eqref{eq:ouralgqformdual}.

To this end, we let
\begin{align*}
H=P\otimes I_d,\quad \tilde{H}=\tilde{P}\otimes I_d, 
\end{align*}
and impose Assumption~\ref{asm:weightmatrix} on $P, \tilde{P}\in \mathbb{R}^{N\times N}$. 

\begin{assumption}\label{asm:weightmatrix}
The matrices $P=(p_{ij})_{N\times N}$ and $\tilde{P}=(\tilde{p}_{ij})_{N\times N}$ satisfy the following:
\begin{enumerate}[(a)]
\item $p_{ij}=p_{ji}$, $\tilde{p}_{ij}=\tilde{p}_{ji}$, $\forall\{i,j\}\in\mathcal{E}$.
\item $p_{ij}=\tilde{p}_{ij}=0$, $\forall i\in\mathcal{V}$, $\forall j\notin\mathcal{N}_i\cup\{i\}$.
\item $\operatorname{Null}(P)=\operatorname{Null}(\tilde{P})=\operatorname{span}(\mathbf{1})$.
\item $P\succeq \mathbf{O}$, $\tilde{P}\succeq \mathbf{O}$.
\end{enumerate}
\end{assumption}
The nodes can jointly determine $P,\tilde{P}$ under Assumption~\ref{asm:weightmatrix} without any centralized coordination. For instance, we can let each node $i\in\mathcal{V}$ agree with every neighbor $j\in\mathcal{N}_i$ on $p_{ij}=p_{ji}<0$ and $\tilde{p}_{ij}=\tilde{p}_{ji}<0$, and set $p_{ii}=-\sum_{j\in \mathcal{N}_i}p_{ij}$ and $\tilde{p}_{ii}=-\sum_{j\in \mathcal{N}_i}\tilde{p}_{ij}$, which directly guarantee Assumption~\ref{asm:weightmatrix}(a). Then, Assumption~\ref{asm:weightmatrix}(c)(d) are satisfied effortlessly by means of Assumption~\ref{asm:weightmatrix}(b) and the connectivity of $\mathcal{G}$. Typical examples of such $P$ and $\tilde{P}$ include the graph Laplacian matrix $L_{\mathcal{G}}\in\mathbb{R}^{N\times N}$ with $[L_{\mathcal{G}}]_{ij}=[L_{\mathcal{G}}]_{ji}=-1$ $\forall \{i,j\}\in \mathcal{E}$ and the Metropolis weight matrix $M_{\mathcal{G}}\in\mathbb{R}^{N\times N}$ with $[M_{\mathcal{G}}]_{ij}=[M_{\mathcal{G}}]_{ji}=-\frac{1}{\max\{|\mathcal{N}_i|, |\mathcal{N}_j|\}+1}$ $\forall \{i,j\}\in \mathcal{E}$. Also, the conditions on $H$ and $\tilde{H}$ in Section~\ref{ssec:AMM} hold due to Assumption~\ref{asm:weightmatrix}.

With Assumption~\ref{asm:weightmatrix}, \eqref{eq:ouralgqformdual} can be accomplished by letting each node update as
\begin{align}
q_i^{k+1}=q_i^k+\rho\textstyle{\sum_{j\in \mathcal{N}_i\cup\{i\}}}\tilde{p}_{ij}x_j^{k+1},\quad\forall i\in\mathcal{V}.\label{eq:DAMMdualnodei}
\end{align}
This is a local operation, as each node $i\in\mathcal{V}$ only needs to acquire the primal variables associated with its neighbors.

It remains to design the surrogate function $u^k$ so that \eqref{eq:ouralgqformprimal} can be realized in a distributed way. To this end, let $\psi_i^k:\mathbb{R}^d\rightarrow \mathbb{R}$ $\forall i\in \mathcal{V}$ $\forall k\ge 0$ be a set of functions under Assumption~\ref{asm:psi}.

\begin{assumption}\label{asm:psi}
The functions $\psi_i^k$ $\forall i\in \mathcal{V}$ $\forall k\ge 0$ satisfy:
\begin{enumerate}[(a)]
\item $\psi_i^k$ $\forall i\in \mathcal{V}$ $\forall k\ge 0$ are twice continuously differentiable.
\item $\psi_i^k$ $\forall i\in \mathcal{V}$ $\forall k\ge 0$ are strongly convex, whose convexity parameters are \emph{uniformly} bounded from below by some positive constant.
\item $\nabla\psi_i^k$ $\forall i\in \mathcal{V}$ $\forall k\ge 0$ are Lipschitz continuous, whose Lipschitz constants are \emph{uniformly} bounded from above by some positive constant.
\item $(\sum_{i\in \mathcal{V}} \psi_i^k(x_i))-\frac{\rho}{2}\|\mathbf{x}\|_H^2$ $\forall k\ge0$ are convex.
\end{enumerate}
\end{assumption}

To meet Assumption~\ref{asm:psi}(d), it suffices to let $\psi_i^k$ $\forall i\in\mathcal{V}$ $\forall k\ge0$ be any strongly convex functions whose convexity parameters are larger than or equal to $\rho\lambda_{\max}(H)>0$, and one readily available upper bound on $\lambda_{\max}(H)$ is $\max_{i\in\mathcal{V},\;j\in\mathcal{N}_i}|p_{ij}|N$. 

Now define
\begin{align}
\phi^k(\mathbf{x}):=\bigl(\textstyle{\sum_{i\in \mathcal{V}}}\psi_i^k(x_i)\bigr)-\frac{\rho}{2}\|\mathbf{x}\|_H^2,\quad\forall k\ge 0,\label{eq:phi=sumpsi}
\end{align}
and let $D_{\phi^k}(\mathbf{x},\mathbf{x}^k)$ be the Bregman divergence associated with $\phi^k$ at $\mathbf{x}$ and $\mathbf{x}^k$, i.e.,
\begin{align}
D_{\phi^k}(\mathbf{x}, \mathbf{x}^k)=\phi^k(\mathbf{x})-\phi^k(\mathbf{x}^k)-\langle \nabla \phi^k(\mathbf{x}^k), \mathbf{x}-\mathbf{x}^k\rangle.\label{eq:Bregmandiv}
\end{align}
Then, we set $u^k$ as
\begin{align}
u^k(\mathbf{x})=D_{\phi^k}(\mathbf{x}, \mathbf{x}^k)+\langle \nabla f(\mathbf{x}^k), \mathbf{x}\rangle.\label{eq:compositeformofuk}
\end{align}
With Assumption~\ref{asm:psi}, \eqref{eq:compositeformofuk} is sufficient to ensure Assumption~\ref{asm:ukassumption}. To see this, note from Assumption~\ref{asm:psi}(a)(d) that $u^k(\mathbf{x})$ in \eqref{eq:compositeformofuk} is twice continuously differentiable and convex, i.e., Assumption~\ref{asm:ukassumption}(a) holds. Also note from \eqref{eq:Bregmandiv} and \eqref{eq:compositeformofuk} that
\begin{align}
\nabla u^k(\mathbf{x}) = \nabla \phi^k(\mathbf{x})-\nabla \phi^k(\mathbf{x}^k)+\nabla f(\mathbf{x}^k).\label{eq:ukgradient}
\end{align}
This, along with Assumption~\ref{asm:psi} (b)(c), guarantees Assumption~\ref{asm:ukassumption}(b)(c)(d).

To see how \eqref{eq:compositeformofuk} results in a distributed implementation of \eqref{eq:ouralgqformprimal}, note from \eqref{eq:ukgradient} that \eqref{eq:ouralgqformprimal} is equivalent to
\begin{equation*}
\mathbf{0}=\nabla \phi^k(\mathbf{x}^{k+1})-\nabla \phi^k(\mathbf{x}^k)+\nabla f(\mathbf{x}^k)+g^{k+1}+\rho H\mathbf{x}^{k+1}+\mathbf{q}^k
\end{equation*}
for some $g^{k+1}\in \partial h(\mathbf{x}^{k+1})$. Then, using \eqref{eq:phi=sumpsi} and the structure of $H$ given in Assumption~\ref{asm:weightmatrix}, this can be written as
\begin{align*}
\mathbf{0}=&\nabla\psi_i^k(x_i^{k+1})+g_i^{k+1}+q_i^k-\nabla\psi_i^k(x_i^k)+\nabla f_i(x_i^k)\displaybreak[0]\\
&+\rho\textstyle{\sum_{j\in \mathcal{N}_i\cup\{i\}}}p_{ij}x_j^k,\quad\forall i\in\mathcal{V},
\end{align*}
where $g_i^{k+1}\in\partial h_i(x_i^{k+1})$. In other words, \eqref{eq:ouralgqformprimal} can be achieved by letting each node $i\in\mathcal{V}$ solve the following strongly convex optimization problem:
\begin{align}
&x_i^{k+1}=\operatorname{\arg\;\min}_{x\in\mathbb{R}^d}~\psi_i^k(x)+h_i(x)\nonumber\displaybreak[0]\\
&\!\!\!\!+\langle x, q_i^k-\nabla \psi_i^k(x_i^k)+\nabla f_i(x_i^k)+\rho\textstyle{\sum_{j\in \mathcal{N}_i\cup\{i\}}}p_{ij}x_j^k\rangle,\label{eq:DAMMprimalnodei}
\end{align}
which can be locally carried out by node $i$. 

Algorithms described by \eqref{eq:q0}, \eqref{eq:DAMMprimalnodei}, and \eqref{eq:DAMMdualnodei} under Assumptions~\ref{asm:weightmatrix} and~\ref{asm:psi} constitute a set of distributed realizations of AMM, referred to as \emph{Distributed Approximate Method of Multipliers} (DAMM), which can be implemented by the nodes in $\mathcal{G}$ via exchanging information with their neighbors only. Algorithm~\ref{alg:mainalg} describes how each node acts in DAMM. 

\begin{algorithm}\caption{DAMM}\label{alg:mainalg}
\begin{algorithmic}[1]
\STATE \textbf{Initialization:} 
\STATE Each node $i\in\mathcal{V}$ selects $q_i^0\in\mathbb{R}^d$ such that $\sum_{i\in \mathcal{V}} q_i^0=\mathbf{0}$ (or simply sets $q_i^0=\mathbf{0}$). 
\STATE Each node $i\in \mathcal{V}$ arbitrarily sets $x_i^0\in \mathbb{R}^d$ and sends $x_i^0$ to every neighbor $j\in \mathcal{N}_i$.
\FOR {$k\ge 0$}
\STATE Each node $i\in \mathcal{V}$ computes
$x_i^{k+1}=\operatorname{\arg\;\min}_{x\in\mathbb{R}^d}\psi_i^k(x)+h_i(x)+\langle x, q_i^k-\nabla \psi_i^k(x_i^k)+\nabla f_i(x_i^k)+\rho\sum_{j\in \mathcal{N}_i\cup\{i\}}p_{ij}x_j^k\rangle$.
\STATE Each node $i\in \mathcal{V}$ sends $x_i^{k+1}$ to every neighbor $j\in \mathcal{N}_i$.
\STATE Each node $i\in \mathcal{V}$ computes
$q_i^{k+1}=q_i^k+\rho \sum_{j\in \mathcal{N}_i\cup\{i\}} \tilde{p}_{ij}x_j^{k+1}$.
\ENDFOR
\end{algorithmic}
\end{algorithm}

Finally, we provide two examples of DAMM with two particular choices of $\psi_i^k$.

\begin{example}
\rm For each $i\in\mathcal{V}$, let $\psi_i^k(x)=r_i(x-x_i^k)+\frac{\epsilon_i}{2}\|x\|^2$ $\forall k\ge 0$, where $r_i:\mathbb{R}^d\rightarrow \mathbb{R}$ $\forall i\in \mathcal{V}$ can be any convex, smooth, and twice continuously differentiable functions and $\epsilon_i>0$ is such that $\epsilon_i\ge\rho\lambda_{\max}(H)$. Then, DAMM reduces to a new distributed \emph{proximal} algorithm, with the following local update of $x_i^k$: 
\begin{align*}
x_i^{k+1}=&\operatorname{\arg\;\min}_{x\in\mathbb{R}^d}~r_i(x-x_i^k)+\frac{\epsilon_i}{2}\|x-x_i^k\|^2+h_i(x)\displaybreak[0]\\
&+\langle x, q_i^k-\nabla r_i(\mathbf{0})+\nabla f_i(x_i^k)+\rho\textstyle{\sum_{j\in \mathcal{N}_i\cup\{i\}}}p_{ij}x_j^k\rangle.
\end{align*}
\end{example}

\begin{example}
\rm Suppose $f_i$ $\forall i\in \mathcal{V}$ are twice continuously differentiable. Since $\nabla^2 f_i(x)\preceq M_i I_d$ $\forall x\in\mathbb{R}^d$, we can let each $\psi_i^k(x)=\frac{1}{2}x^T(\nabla^2 f_i(x_i^k)+\epsilon_i I_d)x$, where $\epsilon_i>0$ is such that $\epsilon_i\ge\rho\lambda_{\max}(H)$. Then, the resulting DAMM is a new distributed \emph{second-order} method, with the following local update of $x_i^k$:
\begin{align*}
x_i^{k+1}=&\operatorname{\arg\;\min}_{x\in\mathbb{R}^d}~\frac{1}{2}\|x-x_i^k\|_{\nabla^2 f_i(x_i^k)+\epsilon_iI_d}^2+h_i(x)\displaybreak[0]\\
&+\langle x, q_i^k+\nabla f_i(x_i^k)+\rho\textstyle{\sum_{j\in \mathcal{N}_i\cup\{i\}}}p_{ij}x_j^k\rangle.
\end{align*}
\end{example}

\subsection{Special Case: Smooth Problem}\label{ssec:smooth}

In this subsection, we focus on the smooth convex optimization problem $\min_{x\in\mathbb{R}^d}\sum_{i\in\mathcal{V}}f_i(x)$, i.e., \eqref{eq:prob} with $h_i(x)\equiv 0$ $\forall i\in \mathcal{V}$. For this special case of \eqref{eq:prob}, we provide additional designs of the surrogate function $u^k$ in AMM, leading to a couple of variations of DAMM.

Here, we let $u^k$ still be in the form of \eqref{eq:compositeformofuk}, but no longer require $\phi^k+\frac{\rho}{2}\|\cdot\|_H^2$ be a separable function as in \eqref{eq:phi=sumpsi}. Instead, we construct $\phi^k$ based upon another function $\gamma^k:\mathbb{R}^{Nd}\rightarrow \mathbb{R}$ under Assumption~\ref{asm:gammakassumption}.

\begin{assumption}\label{asm:gammakassumption}
The functions $\gamma^k$ $\forall k\ge 0$ satisfy the following:
\begin{enumerate}[(a)]
\item $\gamma^k$ $\forall k\ge 0$ are twice continuously differentiable. 
\item $\gamma^k$ $\forall k\ge0$ are strongly convex, whose convexity parameters are \emph{uniformly} \linebreak[4] bounded from below by $\underline{\gamma}>0$.
\item $\nabla\gamma^k$ $\forall k\ge0$ are Lipschitz continuous, whose Lipschitz constants are \emph{uniformly} bounded from above by $\bar{\gamma}>0$.
\item $(\gamma^k)^\star(\mathbf{x})-\frac{\rho}{2}\|\mathbf{x}\|_H^2$ $\forall k\ge0$ are convex, where $(\gamma^k)^\star(\mathbf{x})=\sup_{\mathbf{y}\in\mathbb{R}^{Nd}}\langle \mathbf{x},\mathbf{y}\rangle-\gamma^k(\mathbf{y})$ is the convex conjugate function of $\gamma^k$.
\item For any $k\ge0$ and any $\mathbf{x}\in\mathbb{R}^{Nd}$, the $i$th $d$-dimensional block of $\nabla\gamma^k(\mathbf{x})$, denoted by $\nabla_i \gamma^k(\mathbf{x})$, is independent of $x_j$ $\forall j\notin \mathcal{N}_i\cup \{i\}$.
\end{enumerate}
\end{assumption}

From Assumption~\ref{asm:gammakassumption}(b)(c), each $(\gamma^k)^\star$ is $(1/\bar{\gamma})$-strongly convex and $(1/\underline{\gamma})$-smooth \cite{Kakade09}, so that Assumption~\ref{asm:gammakassumption}(d) holds as long as $I_{Nd}/\bar{\gamma}-\rho H\succeq \mathbf{O}$. Now we set
\begin{align}
\phi^k(\mathbf{x})=(\gamma^k)^\star(\mathbf{x})-\frac{\rho}{2}\|\mathbf{x}\|_H^2,\quad\forall k\ge0.\label{eq:phismooth}
\end{align}
Unlike DAMM, $u^k$ given by \eqref{eq:compositeformofuk} and \eqref{eq:phismooth} under Assumption~\ref{asm:gammakassumption} does not necessarily guarantee that $u^k(\cdot)+\frac{\rho}{2}\|\cdot\|_H^2$ is separable. Below, we show that such $u^k$ $\forall k\ge0$ also satisfy Assumption~\ref{asm:ukassumption}, leading to another subclass of AMM. 

To do so, first notice that the strong convexity and smoothness of $(\gamma^k)^\star$ guarantee Assumption~\ref{asm:ukassumption}(b)(c). Also note from \eqref{eq:ukgradient} that Assumption~\ref{asm:ukassumption}(d) is assured. In addition, due to Assumption~\ref{asm:gammakassumption}(d), $\phi^k$ is convex and, thus, so is $u^k$. To show the twice continuous differentiability of $u^k$ in Assumption~\ref{asm:ukassumption}(a), consider the fact from \cite{Beck03} that due to Assumption~\ref{asm:gammakassumption}(b)(c), $\nabla\gamma^k$ is invertible and its inverse function is $(\nabla \gamma^k)^{-1}=\nabla(\gamma^k)^\star$. This, along with \eqref{eq:phismooth}, implies that
\begin{align}
\nabla\phi^k(\mathbf{x})=(\nabla \gamma^k)^{-1}(\mathbf{x})-\rho H\mathbf{x}.\label{eq:gradientphi}
\end{align}
From the inverse function theorem \cite{Spivak65}, $(\nabla \gamma^k)^{-1}$ is continuously differentiable, so that $\phi^k$ and therefore $u^k$ given by \eqref{eq:compositeformofuk} and \eqref{eq:phismooth} are twice continuously differentiable. We then conclude that Assumption~\ref{asm:ukassumption} holds.

Equipped with \eqref{eq:compositeformofuk}, \eqref{eq:phismooth}, and Assumption~\ref{asm:gammakassumption}, we start to derive additional distributed realizations of AMM \eqref{eq:ouralgqformprimal}--\eqref{eq:q0} for minimizing $\sum_{i\in\mathcal{V}}f_i(x)$. As $h_i(x)\equiv 0$ $\forall i\in\mathcal{V}$, \eqref{eq:ouralgqformprimal} is equivalent to
\begin{align}
\nabla \phi^k(\mathbf{x}^{k+1})\!-\!\nabla \phi^k(\mathbf{x}^k)\!+\!\nabla f(\mathbf{x}^k)\!+\!\rho H\mathbf{x}^{k+1}\!+\!\mathbf{q}^k \!= \!\mathbf{0}.\label{eq:phikoptcondsmooth}
\end{align}
This, together with \eqref{eq:gradientphi}, gives
\begin{align}
\mathbf{x}^{k+1}=\nabla \gamma^k(\nabla \phi^k(\mathbf{x}^k)-\nabla f(\mathbf{x}^k)-\mathbf{q}^k).\label{eq:distributedimplementationGk}
\end{align}
Like DAMM, we let each node $i\in\mathcal{V}$ maintain the $i$th $d$-dimensional blocks of $\mathbf{x}^k$ and $\mathbf{q}^k$, i.e., $x_i^k$ and $q_i^k$, where the update of $q_i^k$ is the same as \eqref{eq:DAMMdualnodei}. According to \eqref{eq:distributedimplementationGk}, the update of $x_i^k$ is given by $x_i^{k+1}=\nabla_i\gamma^k(\nabla\phi^k(\mathbf{x}^k)-\nabla f(\mathbf{x}^k)-\mathbf{q}^k)$. From Assumption~\ref{asm:gammakassumption}(e), this can be computed by node $i$ provided that it has access to $\nabla_j\phi^k(\mathbf{x}^k)-\nabla f_j(x_j^k)-q_j^k$ $\forall j\in\mathcal{N}_i\cup\{i\}$, where $\nabla_j\phi^k(\mathbf{x}^k)\in\mathbb{R}^d$ denotes the $j$th block of $\nabla\phi^k(\mathbf{x}^k)$. Therefore, the remaining question is whether each node $i\in\mathcal{V}$ is able to locally evaluate $\nabla_i\phi^k(\mathbf{x}^k)$. In fact, this can be enabled by the following two concrete ways of constructing $\gamma^k$.

\textbf{Way \#1}: Let $\gamma^k=\gamma$ $\forall k\ge 0$ for some $\gamma:\mathbb{R}^{Nd}\rightarrow\mathbb{R}$ satisfying Assumption~\ref{asm:gammakassumption}. Then, according to \eqref{eq:phismooth}, $\phi^k$ $\forall k\ge0$ are fixed to some $\phi:\mathbb{R}^{Nd}\rightarrow\mathbb{R}$. We introduce an auxiliary variable $\mathbf{y}^k=((y_1^k)^T,\ldots,(y_N^k)^T)^T$ such that $\mathbf{y}^k=\nabla \phi(\mathbf{x}^k)$ $\forall k\ge0$. From \eqref{eq:phikoptcondsmooth}, $\mathbf{y}^k$ satisfies the recursive relation
\begin{align*}
\mathbf{y}^{k+1}=\mathbf{y}^k-\nabla f(\mathbf{x}^k)-\mathbf{q}^k-\rho H\mathbf{x}^{k+1}.
\end{align*}
Due to the structure of $H$ in Assumption~\ref{asm:weightmatrix}, each node $i\in\mathcal{V}$ is able to locally update $y_i^{k+1}$ as above. Nevertheless, the initialization $\mathbf{y}^0=\nabla\phi(\mathbf{x}^0)=(\nabla\gamma)^{-1}(\mathbf{x}^0)-\rho H\mathbf{x}^0$ cannot be achieved without centralized coordination, given that $\mathbf{x}^0$ is arbitrarily chosen. We may overcome this by imposing a restriction on $\mathbf{x}^0$ as follows: Let each node $i\in\mathcal{V}$ pick any $\tilde{z}_i\in \mathbb{R}^d$, and force $x_i^0=\nabla_i\gamma(\tilde{\mathbf{z}})$ with $\tilde{\mathbf{z}}=(\tilde{z}_1^T,\ldots,\tilde{z}_N^T)^T$, so that $\tilde{\mathbf{z}}=(\nabla\gamma)^{-1}(\mathbf{x}^0)$. Then, by letting each $y_i^0=\tilde{z}_i-\rho\sum_{j\in \mathcal{N}_i\cup\{i\}} p_{ij}x_j^0$, we obtain $\mathbf{y}^0=\nabla\phi(\mathbf{x}^0)$. Due to Assumption~\ref{asm:gammakassumption}(e), such initialization is decentralized, which only requires each node $i$ to share $\tilde{z}_i$ and $x_i^0$ with its neighbors. 

Incorporating the above into \eqref{eq:distributedimplementationGk} yields another distributed version of AMM, which is called \emph{Distributed Approximate Method of Multipliers for Smooth problems with Constant update functions} (DAMM-SC) and is specified in Algorithm~\ref{alg:DAMMSC}.

\begin{algorithm}\caption{DAMM-SC}\label{alg:DAMMSC}
\begin{algorithmic}[1]
	\STATE \textbf{Initialization:} 
	\STATE Each node $i\in\mathcal{V}$ selects $q_i^0\in\mathbb{R}^d$ such that $\sum_{i\in \mathcal{V}} q_i^0=\mathbf{0}$ (or simply sets $q_i^0=\mathbf{0}$). 
	\STATE Each node $i\!\in\!\mathcal{V}$ arbitrarily chooses $\tilde{z}_i\!\in\!\mathbb{R}^d$ and sends $\tilde{z}_i$ to every neighbor $j\!\in\! \mathcal{N}_i$.
	\STATE Each node $i\in\mathcal{V}$ sets $x_i^0\!=\!\nabla_i \gamma(\tilde{\mathbf{z}})$ (which only depends on $\tilde{z}_j$ $\forall j\in\mathcal{N}_i\cup\{i\}$) and sends it to every neighbor $j\in \mathcal{N}_i$.
	\STATE Each node $i\in\mathcal{V}$ sets $y_i^0=\tilde{z}_i-\rho\sum_{j\in \mathcal{N}_i\cup\{i\}} p_{ij}x_j^0$.
	\STATE Each node $i\in\mathcal{V}$ sends $y_i^0-\nabla f_i(x_i^0)-q_i^0$ to every neighbor $j\in \mathcal{N}_i$.
	\FOR {$k\ge 0$}
	\STATE Each node $i\in\mathcal{V}$ computes
	$x_i^{k+1}=\nabla_i \gamma(\mathbf{y}^k-\nabla f(\mathbf{x}^k)-\mathbf{q}^k)$ (which only depends on $y_j^k-\nabla f_j(x_j^k)-q_j^k$ $\forall j\in\mathcal{N}_i\cup\{i\}$).
	\STATE Each node $i\in \mathcal{V}$ sends $x_i^{k+1}$ to every neighbor $j\in \mathcal{N}_i$.
	\STATE Each node $i\in \mathcal{V}$ computes $y_i^{k+1}=y_i^k-\nabla f_i(x_i^k)-q_i^k-\rho \sum_{j\in \mathcal{N}_i\cup\{i\}}p_{ij}x_j^{k+1}$.
	\STATE Each node $i\in \mathcal{V}$ computes $q_i^{k+1}=q_i^k+\rho \sum_{j\in \mathcal{N}_i\cup\{i\}} \tilde{p}_{ij}x_j^{k+1}$.
	\STATE Each node $i\in \mathcal{V}$ sends $y_i^{k+1}-\nabla f_i(x_i^{k+1})-q_i^{k+1}$ to every neighbor $j\in \mathcal{N}_i$.
	\ENDFOR
\end{algorithmic}
\end{algorithm}

\begin{example}
\rm We can further reduce the communication cost of DAMM-SC by restricting $\nabla_i\gamma$ to solely depend on node $i$. For example, let $g_i:\mathbb{R}^d\rightarrow\mathbb{R}$ $\forall i\in\mathcal{V}$ be arbitrary twice continuously differentiable, smooth, convex functions and let $g(\mathbf{x})=\sum_{i\in\mathcal{V}}g_i(x_i)$. Then, we may choose $\gamma(\mathbf{x})=g(\mathbf{x})+\frac{\epsilon}{2}\|\mathbf{x}\|^2$ with $\epsilon>0$. It is straightforward to see that Assumption~\ref{asm:gammakassumption}(a)(b)(c) hold and Assumption~\ref{asm:gammakassumption}(d) can be satisfied with proper $\rho,\epsilon>0$. For such a choice of $\gamma$, $\nabla_i\gamma(\mathbf{x})=\nabla g_i(x_i)+\epsilon x_i$, which is up to node $i$ itself and hence meets Assumption~\ref{asm:gammakassumption}(e). Thus, the primal update of DAMM-SC, i.e., Line~8 of Algorithm~\ref{alg:DAMMSC}, is simplified to $x_i^{k+1}=\nabla g_i(y_i^k-\nabla f_i(x_i^k)-q_i^k)+\epsilon(y_i^k-\nabla f_i(x_i^k)-q_i^k)$, so that each node $i$ only needs to share $x_i^{k+1}$ with its neighbors during each iteration and the local communications in Line~6 and Line~12 of Algorithm~\ref{alg:DAMMSC} are eliminated.
\end{example}

\textbf{Way \#2}: For each $k\ge0$, let $\gamma^k(\mathbf{x})=\frac{1}{2}\mathbf{x}^TG^k\mathbf{x}$, where $G^k=(G^k)^T\in \mathbb{R}^{Nd\times Nd}$. Suppose there exist $\bar{\gamma}\ge\underline{\gamma}>0$ such that $\underline{\gamma}I_{Nd}\preceq G^k\preceq \bar{\gamma}I_{Nd}$ $\forall k\ge0$, and also suppose $(G^k)^{-1}\succeq \rho H$ $\forall k\ge0$. Moreover, we let each $G^k$ have a neighbor-sparse structure, i.e., the $d\times d$ $(i,j)$-block of $G^k$, denoted by $G_{ij}^k$, is equal to $\mathbf{O}\in \mathbb{R}^{d\times d}$ for all $j\notin \mathcal{N}_i\cup\{i\}$. It can be shown that such quadratic $\gamma^k$'s satisfy Assumption~\ref{asm:gammakassumption}. Substituting $\gamma^k(\mathbf{x})=\frac{1}{2}\mathbf{x}^TG^k\mathbf{x}$ into \eqref{eq:distributedimplementationGk} gives
\begin{equation}\label{eq:DAMMSQ}
\mathbf{x}^{k+1} = \mathbf{x}^k-G^k(\rho H\mathbf{x}^k+\nabla f(\mathbf{x}^k)+\mathbf{q}^k),
\end{equation}
which can be executed in a distributed manner due to the neighbor-sparse structure of $G^k$. This leads to an additional distributed version of AMM, which is referred to as \emph{Distributed Approximate Method of Multipliers for Smooth problems with Quadratic update functions} (DAMM-SQ) and is elaborated in Algorithm~\ref{alg:DAMMSQ} where we introduce, for convenience, an auxiliary variable $\mathbf{z}^k=((z_1^k)^T,\ldots,(z_N^k)^T)^T=\rho H\mathbf{x}^k+\nabla f(\mathbf{x}^k)+\mathbf{q}^k$. 

\begin{algorithm}\caption{DAMM-SQ}\label{alg:DAMMSQ}
\begin{algorithmic}[1]
\STATE \textbf{Initialization:} 
\STATE Same as Lines 2--3 in Algorithm~\ref{alg:mainalg}.
\STATE Each node $i\in \mathcal{V}$ computes $z_i^0=\nabla f_i(x_i^0)+q_i^0+\rho \sum_{j\in \mathcal{N}_i\cup \{i\}} p_{ij}x_j^0$ and sends it to every neighbor $j\in \mathcal{N}_i$.
\FOR {$k\ge 0$}
\STATE Each node $i\in \mathcal{V}$ computes $x_i^{k+1}=x_i^k-\sum_{j\in \mathcal{N}_i\cup\{i\}}G_{ij}^kz_j^k$.
\STATE Each node $i\in \mathcal{V}$ sends $x_i^{k+1}$ to every neighbor $j\in \mathcal{N}_i$.
\STATE Each node $i\in \mathcal{V}$ computes $q_i^{k+1}=q_i^k+\rho \sum_{j\in \mathcal{N}_i\cup\{i\}} \tilde{p}_{ij}x_j^{k+1}$.
\STATE Each node $i\in \mathcal{V}$ computes $z_i^{k+1}=\nabla f_i(x_i^{k+1})+q_i^{k+1}+\rho \sum_{j\in \mathcal{N}_i\cup \{i\}} p_{ij}x_j^{k+1}$.
\STATE Each node $i\in \mathcal{V}$ sends $z_i^{k+1}$ to every neighbor $j\in \mathcal{N}_i$.
\ENDFOR
\end{algorithmic}
\end{algorithm}

\begin{example}
\rm We present a new distributed \emph{gradient-tracking} algorithm as an example of DAMM-SQ, where $G^k=\frac{1}{\rho}(I_N-P)\otimes I_d$ $\forall k\ge 0$ with $P=\tilde{P}\prec I_N$ under Assumption~\ref{asm:weightmatrix}. Note that $P=\tilde{P}\prec I_N$ can be satisfied by letting $I_N-P=I_N-\tilde{P}$ be strictly diagonally dominant with positive diagonal entries. Similar to the discussions below Assumption~\ref{asm:weightmatrix}, such $P,\tilde{P}$ and therefore $G^k$ can be locally determined by the nodes. Moreover, since $(I_N-P)^{-1}\succeq I_N\succ P$, $(G^k)^{-1}=\rho (I_N-P)^{-1}\otimes I_d\succ \rho H$. Also, it can be verified that all the other conditions on $G^k$ required by DAMM-SQ hold. With this particular $G^k$, \eqref{eq:DAMMSQ} becomes
\begin{align*}
\mathbf{x}^{k+1} = &((P+(I_N-P)^2)\otimes I_d)\mathbf{x}^k\displaybreak[0]\\
&-\frac{1}{\rho}((I_N-P)\otimes I_d)(\nabla f(\mathbf{x}^k)+\mathbf{q}^k).
\end{align*}
Since $P\mathbf{1}=\mathbf{0}$ and $(\mathbf{1}^T\otimes I_d)\mathbf{q}^k=\mathbf{0}$, this indicates $\frac{1}{N}\sum_{i\in \mathcal{V}} x_i^{k+1}=\frac{1}{N}\sum_{i\in \mathcal{V}} x_i^k-\frac{1}{\rho}\Bigl(\frac{1}{N}\sum_{i\in \mathcal{V}} \nabla f_i(x_i^k)\Bigr)$. It can thus be seen that this example of DAMM-SQ tracks the average of all the local gradients so as to imitate the behavior of the (centralized) gradient descent method. 
\end{example}

\section{Existing Algorithms as Specializations}\label{sec:connection}

This section exemplifies that AMM and its distributed realizations generalize a variety of existing distributed optimization methods originally developed in different ways. 

\subsection{Specializations of DAMM-SQ}\label{ssec:specialDAMMSQ}

DAMM-SQ described in Algorithm~\ref{alg:DAMMSQ} generalizes several distributed first-order and second-order algorithms for solving problem \eqref{eq:prob} with $h_i\equiv 0$ $\forall i\in \mathcal{V}$, including EXTRA \cite{ShiW15}, ID-FBBS \cite{Xu18}, and DQM \cite{Mokhtari16a}.

\subsubsection{EXTRA}\label{sssec:EXTRA}

EXTRA \cite{ShiW15} is a well-known first-order algorithm developed from a decentralized gradient descent method. From \cite[Eq. (3.5)]{ShiW15}, EXTRA can be expressed as
\begin{align}
\mathbf{x}^{k+1}\!=\!(\tilde{W}\!\otimes\! I_d)\mathbf{x}^k\!-\!\alpha\nabla f(\mathbf{x}^k)\!+\!\!\sum_{t=0}^k\!((W\!-\!\tilde{W})\!\otimes\! I_d)\mathbf{x}^t,\label{eq:EXTRA}
\end{align}
where $\mathbf{x}^0$ is arbitrarily given, $\alpha>0$, and $W, \tilde{W}\in \mathbb{R}^{N\times N}$ are two average matrices associated with $\mathcal{G}$\footnote{We say $W\in \mathbb{R}^{N\times N}$ is an average matrix associated with $\mathcal{G}$ if $W=W^T$, $W\mathbf{1}=\mathbf{1}$, $\|W-\mathbf{1}\mathbf{1}^T/N\|<1$, and $[W]_{ij}=0$ $\forall i\in \mathcal{V}$ $\forall j\notin \mathcal{N}_i\cup\{i\}$. It can be shown that $-I_N\prec W\preceq I_N$.}. By letting $\mathbf{q}^k = \frac{1}{\alpha}\sum_{t=0}^k ((\tilde{W}-W)\otimes I_d)\mathbf{x}^t$, \eqref{eq:EXTRA} becomes
\begin{align}
\mathbf{x}^{k+1} &= (\tilde{W}\otimes I_d)\mathbf{x}^k-\alpha\nabla f(\mathbf{x}^k)-\alpha \mathbf{q}^k,\label{eq:extraprimal}\\
\mathbf{q}^{k+1} &= \mathbf{q}^k+\frac{1}{\alpha}((\tilde{W}-W)\otimes I_d)\mathbf{x}^{k+1}.\label{eq:extradual}
\end{align}
This is in the form of DAMM-SQ with $\rho=1/\alpha$, $\tilde{P}=\tilde{W}-W$, $P=I_N-\tilde{W}$, and $G^k=\alpha I_{Nd}$. As \cite{ShiW15} assumes $\tilde{W}\succeq W$ and $\operatorname{Null}(\tilde{W}-W)=\operatorname{span}(\mathbf{1})$, Assumption~\ref{asm:weightmatrix} and \eqref{eq:q0} are guaranteed. Besides, \cite{ShiW15} assumes $\tilde{W}\succ \mathbf{O}$, so that $(G^k)^{-1}=\rho I_{Nd}\succeq \rho(I_N-\tilde{W})\otimes I_d=\rho H$. It is then straightforward to see that this particular $G^k$ satisfies all the conditions in Section~\ref{ssec:smooth}.

\subsubsection{ID-FBBS}

ID-FBBS \cite{Xu18} takes the form of \eqref{eq:extraprimal}--\eqref{eq:extradual}, except that $W=2\tilde{W}-I_N$ and $\mathbf{q}^0$ can be any vector in $S^\perp$. Since \cite{Xu18} also assumes $\tilde{W}\succ \mathbf{O}$, it follows from the analysis in Section~\ref{sssec:EXTRA} that ID-FBBS is a particular example of DAMM-SQ, where $\rho=1/\alpha$, $\tilde{P}=P=I_N-\tilde{W}$, and $G^k=\alpha I_{Nd}$, with Assumption~\ref{asm:weightmatrix} and all the conditions on $G^k$ in Section~\ref{ssec:smooth} satisfied.

\subsubsection{DQM}
DQM \cite{Mokhtari16a} is a distributed second-order method for solving problem \eqref{eq:prob} with strongly convex, smooth, and twice continuously differentiable $f_i$'s and zero $h_i$'s. DQM takes the following form: $x_i^{k+1} = x_i^k-(2c|\mathcal{N}_i|I_d+\nabla^2 f_i(x_i^k))^{-1}(c\!\sum_{j\in \mathcal{N}_i}\!(x_i^k\!-\!x_j^k)+\nabla f_i(x_i^k)\!+\!q_i^k)$ and $q_i^{k+1}=q_i^k+c\sum_{j\in \mathcal{N}_i}(x_i^{k+1}-x_j^{k+1})$, where $x_i^0$ $\forall i\in\mathcal{V}$ are arbitrarily given, $q_i^0$ $\forall i\in\mathcal{V}$ satisfy $\sum_{i\in \mathcal{V}} q_i^0 = \mathbf{0}$, and $c>0$. Observe that DAMM-SQ reduces to DQM if we set $G^k=(2c\operatorname{diag}(|\mathcal{N}_1|, \ldots, |\mathcal{N}_N|)\otimes I_d+\nabla^2 f(\mathbf{x}^k))^{-1}$, $\rho=c$, $p_{ij}=\tilde{p}_{ij}=-1$ $\forall \{i,j\}\in \mathcal{E}$, and $p_{ii}=\tilde{p}_{ii}=|\mathcal{N}_i|$ $\forall i\in \mathcal{V}$. Clearly, $P$ and $\tilde{P}$ satisfy Assumption~\ref{asm:weightmatrix}. Additionally, $(G^k)^{-1}\succeq 2c\operatorname{diag}(|\mathcal{N}_1|, \ldots, |\mathcal{N}_N|)\otimes I_d\succeq \rho P\otimes I_d=\rho H$, and $G^k$ meets all the other requirements in Section~\ref{ssec:smooth}.

\subsection{Specializations of DAMM}\label{ssec:specialDAMM}

A number of distributed algorithms for composite or nonsmooth convex optimization can be cast into the form of DAMM described in Algorithm~\ref{alg:mainalg}, including PGC \cite{Hong17}, PG-EXTRA \cite{ShiW15a}, DPGA \cite{Aybat18}, a decentralized ADMM in \cite{Shi14}, and D-FBBS \cite{Xu18}.

\subsubsection{PGC and PG-EXTRA}\label{sssec:PGC}

PGC \cite{Hong17} and PG-EXTRA \cite{ShiW15a} are two recently proposed distributed methods for solving problem \eqref{eq:prob}, where PGC is constructed upon ADMM \cite{Boyd11} and PG-EXTRA is an extension of EXTRA \cite{ShiW15} to address \eqref{eq:prob} with nonzero $h_i$'s. According to \cite[Section V-D]{Hong17}, PGC can be described by $\mathbf{q}^0\in S^\perp$, $\mathbf{q}^k = \mathbf{q}^0+\sum_{\ell=1}^k ((\Lambda_\beta(\tilde{W}-W))\otimes I_d)\mathbf{x}^{\ell}$ $\forall k\ge 1$, and $\mathbf{x}^{k+1}=\operatorname{\arg\;\min}_{\mathbf{x}\in \mathbb{R}^{Nd}}h(\mathbf{x})+\frac{1}{2}\|\mathbf{x}-(\tilde{W}\otimes I_d)\mathbf{x}^k\|_{\Lambda_\beta\otimes I_d}^2+\langle \nabla f(\mathbf{x}^k)+\mathbf{q}^k, \mathbf{x}\rangle$ $\forall k\ge 0$, where $\mathbf{x}^0$ is arbitrarily given and the parameters are chosen as follows: Let $\Lambda_\beta=\operatorname{diag}(\beta_1,\ldots,\beta_N)$ be a positive definite diagonal matrix and $W,\tilde{W}\in \mathbb{R}^{N\times N}$ be two row-stochastic matrices such that $\Lambda_\beta W$ and $\Lambda_\beta\tilde{W}$ are symmetric, $[W]_{ij},[\tilde{W}]_{ij}>0$ $\forall j\in \mathcal{N}_i\cup\{i\}$, and $[W]_{ij}=[\tilde{W}]_{ij}=0$ otherwise. To cast PGC in the form of DAMM, let $\rho = 1$, $\tilde{P}=\Lambda_\beta(\tilde{W}-W)$, and $P = \Lambda_\beta(I_N-\tilde{W})$. Then, starting from any $\mathbf{q}^0\in S^\perp$, the updates of PGC can be expressed as
\begin{align}
\mathbf{x}^{k+1} =&\operatorname{\arg\;\min}_{\mathbf{x}\in \mathbb{R}^{Nd}}h(\mathbf{x})\!+\!\frac{1}{2}\|\mathbf{x}-\mathbf{x}^k\|_{\Lambda_\beta \otimes I_d}^2\nonumber\displaybreak[0]\\
&+\langle \nabla f(\mathbf{x}^k)\!+\!\mathbf{q}^k\!+\!\rho(P\!\otimes\!I_d)\mathbf{x}^k, \mathbf{x}\rangle,\label{eq:PGextraprimal}\displaybreak[0]\\
\mathbf{q}^{k+1} =&\mathbf{q}^k+\rho(\tilde{P}\otimes I_d)\mathbf{x}^{k+1}.\label{eq:PGextradual}
\end{align}
This means that PGC is exactly in the form of DAMM with $\psi_i^k(\cdot)=\frac{\beta_i}{2}\|\cdot\|^2$. Note that $\Lambda_\beta\succeq \Lambda_\beta\tilde{W}$ and $\operatorname{Null}(\Lambda_\beta(I_N-\tilde{W}))=\operatorname{span}(\mathbf{1})$. In addition, \cite{Hong17} assumes $\Lambda_\beta\tilde{W}\succeq \Lambda_\beta W$, $\Lambda_\beta\tilde{W}\succeq \mathbf{O}$, and $\operatorname{Null}(\Lambda_\beta(\tilde{W}-W))=\operatorname{span}(\mathbf{1})$. Consequently, Assumption~\ref{asm:weightmatrix} and Assumption~\ref{asm:psi} hold. PG-EXTRA can also be described by \eqref{eq:PGextraprimal}--\eqref{eq:PGextradual} with $\beta_i=\rho>0$ $\forall i\in \mathcal{V}$ and $\mathbf{q}^0=\rho(\tilde{P}\otimes I_d)\mathbf{x}^0$, i.e., is a special form of PGC. Therefore, DAMM generalizes both PGC and PG-EXTRA.

\subsubsection{DPGA and a Decentralized ADMM}\label{ssec:DPGA}

DPGA \cite{Aybat18} is a distributed proximal gradient method and has the following form: Given arbitrary $x_i^0$ and $q_i^0=\mathbf{0}$,
\begin{align*}
x_i^{k+1}=&\operatorname{\arg\;\min}_{x\in\mathbb{R}^d}~h_i(x)+\frac{1}{2c_i}\|x-x_i^k\|^2\displaybreak[0]\\
&+\langle\nabla f_i(x_i^k)+q_i^k+\textstyle{\sum_{j\in\mathcal{N}_i\cup\{i\}}}\Gamma_{ij}x_j^k,x\rangle,\\
q_i^{k+1} =& q_i^k+\textstyle{\sum_{j\in \mathcal{N}_i\cup\{i\}}} \Gamma_{ij}x_j^{k+1},\quad\forall i\in\mathcal{V},
\end{align*}
where $c_i>0$ $\forall i\in \mathcal{V}$, $\Gamma_{ij}=\Gamma_{ji}<0$ $\forall \{i,j\}\in \mathcal{E}$, and $\Gamma_{ii}=-\sum_{j\in \mathcal{N}_i} \Gamma_{ij}$ $\forall i\in \mathcal{V}$. The above update equations of DPGA are equivalent to those of DAMM with $\psi_i^k(\cdot)=\frac{1}{2c_i}\|\cdot\|^2$, $\rho=1$, and $P,\tilde{P}$ such that $\tilde{p}_{ij},p_{ij}$ are equal to $\Gamma_{ij}$ if $\{i,j\}\in\mathcal{E}$ or $i=j$ and are equal to $0$ otherwise. Apparently, $P$ and $\tilde{P}$ satisfy Assumption~\ref{asm:weightmatrix}. Furthermore, due to the conditions on $c_i$ in \cite{Aybat18}, it is guaranteed that $\sum_{i\in \mathcal{V}}\psi_i^k(x_i)-\frac{\rho}{2}\|\mathbf{x}\|_H^2$ is convex and, thus, Assumption~\ref{asm:psi} holds. The decentralized ADMM in \cite{Shi14} for solving strongly convex and smooth problems can be viewed as a special case of DPGA with $c_i = \frac{1}{2c|\mathcal{N}_i|}$ $\forall i\in \mathcal{V}$ for some $c>0$ and $\Gamma_{ij}=\Gamma_{ji}=-c$ $\forall \{i,j\}\in \mathcal{E}$, so that it is also a specialization of DAMM.

\subsubsection{D-FBBS}
D-FBBS \cite{Xu18} addresses problem~\eqref{eq:prob} with $f_i\equiv 0$ $\forall i\in \mathcal{V}$, which is designed based on a Bregman method and an operator splitting technique. D-FBBS is described by \eqref{eq:PGextraprimal}--\eqref{eq:PGextradual} with $\beta_i=\rho>0$ $\forall i\in \mathcal{V}$, $\tilde{P}=P=I_N-W$ for an average matrix $W\in \mathbb{R}^{N\times N}$ associated with $\mathcal{G}$, $\mathbf{x}^0$ being arbitrary, and $\mathbf{q}^0\in S^\perp$. 
Clearly, $P,\tilde{P}$ satisfy Assumption~\ref{asm:weightmatrix}. Now let $\psi_i^k(\cdot)=\frac{\rho}{2}\|\cdot\|^2$, which satisfies Assumption~\ref{asm:psi} because \cite{Xu18} assumes $W\succ \mathbf{O}$. Therefore, we conclude that D-FBBS can be specialized from DAMM.

\subsection{Specializations of AMM}

Since DAMM-SQ and DAMM are subsets of AMM, the algorithms in Section~\ref{ssec:specialDAMMSQ} and Section~\ref{ssec:specialDAMM} are also specializations of AMM. Below, we present some other methods \cite{Makhdoumi17,Lei16,Nedic17,Mokhtari16} that can be specialized from AMM but do not belong to DAMM, DAMM-SC, or DAMM-SQ. 

\subsubsection{A Distributed ADMM}\label{ssec:distributedADMM}

In \cite{Makhdoumi17}, a distributed ADMM is proposed to solve \eqref{eq:prob} with $f_i\equiv 0$ $\forall i\in \mathcal{V}$: 
\begin{align*}
\mathbf{x}^{k+1} =& \operatorname{\arg\;\min}_{\mathbf{x}\in \mathbb{R}^{Nd}} h(\mathbf{x})+\langle Q^T\mathbf{w}^k, \mathbf{x}\rangle\displaybreak[0]\\
&+c\langle Q^T(\Lambda^{-1}\otimes I_d)Q\mathbf{x}^k,\mathbf{x}\rangle+\frac{c}{2}\|\mathbf{x}-\mathbf{x}^k\|_{\tilde{Q}}^2,\\
\mathbf{w}^{k+1} =& \mathbf{w}^k+c(\Lambda^{-1}\otimes I_d)Q\mathbf{x}^{k+1},
\end{align*}
where $\mathbf{x}^0\in\mathbb{R}^{Nd}$ is arbitrarily given and $\mathbf{w}^0=\mathbf{0}$. In the above, $c>0$, $\Lambda=\operatorname{diag}(|\mathcal{N}_1|+1, \ldots, |\mathcal{N}_N|+1)$, and $Q=\Gamma\otimes I_d$ with $\Gamma\in \mathbb{R}^{N\times N}$ satisfying $[\Gamma]_{ij}=0$ $\forall i\in\mathcal{V}$ $\forall j\notin \mathcal{N}_i\cup\{i\}$ and $\operatorname{Null}(\Gamma^T\Lambda^{-1}\Gamma)$ $=\operatorname{span}(\mathbf{1})$. Additionally, $\tilde{Q}=\operatorname{diag}(Q_1^TQ_1, \ldots, Q_N^TQ_N)\in \mathbb{R}^{Nd\times Nd}$, where $Q_i\in \mathbb{R}^{Nd\times d}$ $\forall i\in \mathcal{V}$ are such that $Q=(Q_1,\ldots,Q_N)$. It is shown in \cite{Makhdoumi17} that $\tilde{Q}\succeq Q^T(\Lambda^{-1}\!\otimes\!I_d)Q$. We may view this algorithm as a specialization from AMM \eqref{eq:ouralgqformprimal}--\eqref{eq:q0}, in which $\mathbf{q}^k=Q^T\mathbf{w}^k$, $H=\tilde{H}=Q^T(\Lambda^{-1}\otimes I_d)Q$, $\rho=c$, and $u^k(\mathbf{x})=\frac{\rho}{2}(\mathbf{x}-\mathbf{x}^k)^T(\tilde{Q}-H)(\mathbf{x}-\mathbf{x}^k)$. Apparently, $H,\tilde{H}$ are symmetric and positive semidefinite, and $\operatorname{Null}(H)=\operatorname{Null}(\tilde{H})=S$, as required in Section~\ref{ssec:AMM}. Also, since each $Q_i$ has full column rank, $\tilde{Q}\succ \mathbf{O}$. This guarantees Assumption~\ref{asm:ukassumption}.

\subsubsection{A Distributed Primal-Dual Algorithm}

In \cite{Lei16}, a distributed primal-dual algorithm is developed to solve \eqref{eq:prob} with each $h_i$ equal to the indicator function $\mathcal{I}_{X_i}$ with respect to a closed convex set $X_i$, and takes the following form: 
\begin{align}
&\!\!\mathbf{x}^{k+1}\!=\!P_X[\mathbf{x}^k\!\!-\!\alpha(\nabla f(\mathbf{x}^k)\!+\!(\Gamma\otimes I_d)\mathbf{w}^k\!\!+\!\!(\Gamma\otimes I_d)\mathbf{x}^k)],\label{eq:Lei16updateprimal}\\
&\!\!\mathbf{w}^{k+1}\!=\!\mathbf{w}^k+\alpha(\Gamma\otimes I_d)\mathbf{x}^k,\label{eq:Lei16updatedual}
\end{align}
where $\mathbf{x}^0,\mathbf{w}^0$ are arbitrarily given, $X=X_1\times\cdots\times X_N$, and $P_X[\cdot]$ is the projection onto $X$. Besides, $\Gamma\in \mathbb{R}^{N\times N}$ is a symmetric positive semidefinite matrix satisfying $[\Gamma]_{ij}=0$ $\forall i\in\mathcal{V}$ $\forall j\notin \mathcal{N}_i\cup\{i\}$ and $\operatorname{Null}(\Gamma)=\operatorname{span}(\mathbf{1})$, and $0<\alpha\le \frac{1}{2\|\Gamma\|}$. To see how this algorithm relates to AMM, we use \eqref{eq:Lei16updatedual} to rewrite \eqref{eq:Lei16updateprimal} as
\begin{align}
\mathbf{x}^{k+1}=&\operatorname{\arg\;\min}_{\mathbf{x}\in \mathbb{R}^{Nd}}~h(\mathbf{x})\nonumber\displaybreak[0]\\
&+\frac{1}{2\alpha}\|\mathbf{x}-\Bigl(\mathbf{x}^k-\alpha\bigl(\nabla f(\mathbf{x}^k)+(\Gamma\otimes I_d)\mathbf{w}^{k+1}\nonumber\displaybreak[0]\\
&+((\Gamma-\alpha \Gamma^2)\otimes I_d)\mathbf{x}^k\bigr)\Bigr)\|^2,\label{eq:Lei16updateprimalnew}
\end{align}
where $h(\mathbf{x})=\mathcal{I}_{X}(\mathbf{x})$. It can be seen that \eqref{eq:Lei16updatedual}--\eqref{eq:Lei16updateprimalnew} are equivalent to AMM \eqref{eq:AMMprimal}--\eqref{eq:AMMdual} with $\mathbf{v}^k=\mathbf{w}^{k+1}$, $\rho=\alpha$, $\tilde{H}=\Gamma^2\otimes I_d$, $H=(\Gamma/\alpha-\Gamma^2)\otimes I_d$, and $u^k(\mathbf{x})=\langle \nabla f(\mathbf{x}^k),\mathbf{x}\rangle+\frac{1}{2\alpha}\|\mathbf{x}-\mathbf{x}^k\|_{I_{Nd}-\alpha(\Gamma-\alpha \Gamma^2)\otimes I_d}^2$. Since $\alpha\le \frac{1}{2\|\Gamma\|}$, we have $H\succeq \Gamma\otimes I_d/(2\alpha)$ and $I_{Nd}-\alpha(\Gamma-\alpha \Gamma^2)\otimes I_d\succeq I_{Nd}-\alpha\Gamma\otimes I_d \succ \mathbf{O}$. Thus, $H,\tilde{H}$ satisfy the conditions required by AMM in Section~\ref{ssec:AMM}. Also, $u^k$ is strongly convex and satisfies Assumption~\ref{asm:ukassumption}.

\subsubsection{DIGing on Static Networks}

DIGing \cite{Nedic17} is a distributed gradient-tracking method for solving problem \eqref{eq:prob} with $h_i\equiv 0$ $\forall i\in \mathcal{V}$ over time-varying networks. Here, we only consider DIGing on static undirected networks. Let $\alpha>0$ and $W\in \mathbb{R}^{N\times N}$ satisfy $W\mathbf{1}=W^T\mathbf{1}=\mathbf{1}$, $[W]_{ij}=0$ $\forall i\in\mathcal{V}$ $\forall j\notin\mathcal{N}_i\cup\{i\}$, and $\|W-\frac{1}{N}\mathbf{1}\mathbf{1}^T\|<1$. It is shown in \cite{Nedic17} that DIGing with $W=W^T$ can be written as follows: 
\begin{align*}
\mathbf{x}^{k+2}=&(2W\otimes I_d)\mathbf{x}^{k+1}-(W^2\otimes I_d)\mathbf{x}^k\displaybreak[0]\\
&-\alpha(\nabla f(\mathbf{x}^{k+1})-\nabla f(\mathbf{x}^k)),\quad\forall k\ge0,
\end{align*}
where $\mathbf{x}^0$ can be arbitrary and $\mathbf{x}^1=(W\otimes I_d)\mathbf{x}^0-\alpha\nabla f(\mathbf{x}^0)$. Adding the above equation from $k=0$ to $k=K-1$ yields
\begin{align*}
\mathbf{x}^{K+1} =& (W^2\otimes I_d)\mathbf{x}^K+((I_N-W)\otimes I_d)\mathbf{x}^0\displaybreak[0]\\
&-\sum_{k=0}^K ((I_N-W)^2\otimes I_d)\mathbf{x}^k-\alpha\nabla f(\mathbf{x}^K)
\end{align*}
for all $K\ge 0$. By letting $\mathbf{q}^0=\frac{1}{\alpha}((W^2-W)\otimes I_d)\mathbf{x}^0$, the above update is the same as
\begin{align*}
\mathbf{x}^{K+1} &= (W^2\otimes I_d)\mathbf{x}^K-\alpha\nabla f(\mathbf{x}^K)-\alpha \mathbf{q}^K,\displaybreak[0]\\
\mathbf{q}^{K+1} &= \mathbf{q}^K+\frac{1}{\alpha}((I_N-W)^2\otimes I_d)\mathbf{x}^{K+1}.
\end{align*}
Such an algorithmic form of DIGing is identical to AMM \eqref{eq:ouralgqformprimal}--\eqref{eq:q0} with the above given $\mathbf{q}^0\in S^\bot$, $\rho=1/\alpha$, $H=(I_N-W^2)\otimes I_d$, $\tilde{H}=(I_N-W)^2\otimes I_d$, and $u^k(\mathbf{x})=\langle \nabla f(\mathbf{x}^k), \mathbf{x}\rangle+\frac{\rho}{2}\|(W\otimes I_d)(\mathbf{x}-\mathbf{x}^k)\|^2$. It can be verified that $u^k$ $\forall k\ge 0$ and $H,\tilde{H}$ satisfy all the conditions in Section~\ref{ssec:AMM}. 

\subsubsection{ESOM}\label{sssec:ESOM}
ESOM \cite{Mokhtari16} is a class of distributed second-order algorithms that address problem \eqref{eq:prob} with $f_i$ $\forall i\in\mathcal{V}$ being strongly convex, smooth, and twice continuously differentiable and with $h_i\equiv 0$ $\forall i\in\mathcal{V}$. It is developed by incorporating a proximal technique and certain second-order approximations into the Method of Multipliers. To describe ESOM, let $\alpha>0$, $\epsilon>0$, and $W\in \mathbb{R}^{N\times N}$ be an average matrix associated with $\mathcal{G}$ such that $[W]_{ij}\ge 0$ $\forall i,j\in \mathcal{V}$. In addition, define $W_d:= \operatorname{diag}([W]_{11}, \ldots, [W]_{NN})$, $B:=\alpha(I_{Nd}+(W-2W_d)\otimes I_d)$, $D^k:=\nabla^2 f(\mathbf{x}^k)+\epsilon I_{Nd}+2\alpha(I_{Nd}-W_d\otimes I_d)$, and $Q^k(K) := (D^k)^{-\frac{1}{2}}\sum_{t=0}^K ((D^k)^{-\frac{1}{2}}B(D^k)^{-\frac{1}{2}})^t(D^k)^{-\frac{1}{2}}$, $K\ge0$. With the above notations, each ESOM-$K$ algorithm can be described by
\begin{align}
\mathbf{x}^{k+1}\!&=\! \mathbf{x}^k\!\!-\!Q^k(\!K\!)(\nabla f(\mathbf{x}^k)\!\!+\!\mathbf{q}^k\!+\!\alpha(I_{Nd}\!-\!W\!\!\otimes\!I_d)\mathbf{x}^k),\label{eq:ESOM1}\\
\mathbf{q}^{k+1}\!&= \mathbf{q}^k+\alpha(I_{Nd}-\!W\!\otimes I_d)\mathbf{x}^{k+1},\label{eq:ESOM2}
\end{align}
where $\mathbf{x}^0$ is any vector in $\mathbb{R}^{Nd}$ and $\mathbf{q}^0=\mathbf{0}$ which satisfies the initialization \eqref{eq:q0} of AMM. Note that the primal and dual updates of AMM, i.e., \eqref{eq:ouralgqformprimal}--\eqref{eq:ouralgqformdual}, reduce to \eqref{eq:ESOM1}--\eqref{eq:ESOM2} when $\tilde{H}=H=I_{Nd}-W\otimes I_d$, $\rho=\alpha$, and $u^k(\mathbf{x})=\frac{1}{2}\mathbf{x}^T((Q^k(K))^{-1}-\rho H)\mathbf{x}-\langle ((Q^k(K))^{-1}-\rho H)\mathbf{x}^k, \mathbf{x}\rangle+\langle \nabla f(\mathbf{x}^k), \mathbf{x}\rangle$. Clearly, $H$ and $\tilde{H}$ satisfy the conditions in Section~\ref{ssec:AMM}. Also, Assumption~\ref{asm:ukassumption} holds, since $(M+\epsilon+2\rho)I_{Nd}\succeq (Q^k(K))^{-1}\succeq \nabla^2 f(\mathbf{x}^k)+\epsilon I_{Nd}+\rho H$ \cite{Mokhtari16}, where $M>0$ is the Lipschitz constant of all the $\nabla f_i$'s. Note that unlike most specializations of AMM discussed in this section, $u^k(\cdot)+\frac{\rho}{2}\|\cdot\|_H^2$ for ESOM-$K$ with $K\ge1$ is non-separable.

\subsection{Connections between AMM and Existing Unifying Methods}

PUDA \cite{alghunaim20} and ABC \cite{Xu20} are two recently proposed distributed methods for convex composite optimization, which unify a number of existing methods including a subset of the aforementioned specializations of AMM. Nevertheless, unlike AMM that can be specialized to both first-order and second-order methods, PUDA and ABC are first-order algorithms. Moreover, AMM allows $h_i$ $\forall i\in\mathcal{V}$ to be non-identical, i.e., each node only needs to know a local portion of the global nonsmooth objective function $h$. In contrast, PUDA and ABC are restricted to the case of identical $h_i$'s, i.e., each node has to know the entire $h$, and it is not straightforward to extend their analyses to the more general case of non-identical $h_i$'s.

Although none of AMM, PUDA, and ABC can include the others as special cases, they are implicitly connected through the following algorithm: Let $\mathbf{q}^0=\mathbf{0}_{Nd}$. For any $k\ge 0$, let
\begin{align}
\mathbf{z}^{k+1} &= \operatorname{\arg\;\min}_{\mathbf{z}\in\mathbb{R}^{Nd}} u^k(\mathbf{z})+\frac{\rho}{2}\|\mathbf{z}\|_H^2+(\mathbf{q}^k)^T\mathbf{z},\label{eq:UDAABCz}\displaybreak[0]\\
\mathbf{x}^{k+1} &= \operatorname{\arg\;\min}_{\mathbf{x}\in\mathbb{R}^{Nd}}~h(\mathbf{x})+\frac{\rho}{2}\|\mathbf{x}-\mathbf{z}^{k+1}\|^2,\label{eq:UDAABCx}\\
\mathbf{q}^{k+1} &= \mathbf{q}^k+\rho \tilde{H}\mathbf{z}^{k+1}.\label{eq:UDAABCq}
\end{align}
Different from AMM whose primal update \eqref{eq:ouralgqformprimal} involves both the surrogate function and the nonsmooth objective, the above algorithm tackles these two parts separately and thus has two sequential primal minimization operations. Indeed, when $h\equiv 0$, \eqref{eq:UDAABCz}--\eqref{eq:UDAABCq} are identical to \eqref{eq:ouralgqformprimal}--\eqref{eq:ouralgqformdual}.

It can be shown that with particular $u^k$, $H$, and $\tilde{H}$, \eqref{eq:UDAABCz}--\eqref{eq:UDAABCq} and $\mathbf{q}^0=\mathbf{0}_{Nd}$ become equivalent to PUDA and ABC under some mild parameter conditions. Specifically, PUDA corresponds to $\mathbf{u}^k(\mathbf{z})=\frac{\rho}{2}\|\mathbf{z}-\mathbf{x}^k\|_{I-\mathcal{C}_1}^2+\langle \nabla \mathbf{f}(\mathbf{x}^k),\mathbf{z}\rangle$, $\rho>0$, $H=\mathcal{A}_1^{-1}-I_{Nd}+\mathcal{C}_1$, and $\tilde{H}=\mathcal{B}_1^2\mathcal{A}_1^{-1}$, while ABC corresponds to $\mathbf{u}^k(\mathbf{z})=\frac{\rho}{2}\|\mathbf{z}-\mathbf{x}^k\|_{\mathcal{B}_2^{-1}\mathcal{A}_2}^2+\langle \nabla \mathbf{f}(\mathbf{x}^k),\mathbf{z}\rangle$, $\rho>0$, $H=\mathcal{B}_2^{-1}(I_{Nd}-\mathcal{A}_2)$, and $\tilde{H}=\mathcal{B}_2^{-1}\mathcal{C}_2$. Here, $\mathcal{A}_1,\mathcal{B}_1,\mathcal{C}_1,\mathcal{A}_2,\mathcal{B}_2, \mathcal{C}_2\in\mathbb{R}^{Nd\times Nd}$ are proper matrices whose detailed designs can be found in \cite{alghunaim20,Xu20}. Such equivalence requires additional conditions that $\mathcal{A}_1$ and $\mathcal{B}_2$ are invertible, $\mathcal{C}_1\preceq I_{Nd}$, and $\mathcal{B}_1^2\mathcal{A}_1^{-1}$ is symmetric. In fact, these additional conditions are satisfied by many specializations of PUDA and ABC such as those in \cite[Table I]{alghunaim20} and \cite[Table II]{Xu20} when they choose positive definite average matrices. Also, the above options of $u^k$, $H$, and $\tilde{H}$ for PUDA and ABC satisfy our conditions for AMM in Section~\ref{ssec:AMM}.

To summarize, PUDA and ABC with some additional parameter conditions can be specialized from \eqref{eq:UDAABCz}--\eqref{eq:UDAABCq}, while AMM and \eqref{eq:UDAABCz}--\eqref{eq:UDAABCq} are the same if problem~\eqref{eq:prob} is smooth and are different otherwise. As a result, when solving smooth problems, AMM, PUDA, and ABC share a number of common special cases (e.g., EXTRA \cite{ShiW15} and DIGing \cite{Nedic17}).


\section{Convergence Analysis}\label{sec:convergence}

In this section, we analyze the convergence performance of AMM described by \eqref{eq:AMMprimal}--\eqref{eq:AMMdual} (which are equivalent to \eqref{eq:ouralgqformprimal}--\eqref{eq:q0}).

For the purpose of analysis, we add $H\succeq \tilde{H}$ to the conditions on $H,\tilde{H}$ in Section~\ref{ssec:AMM}, leading to Assumption~\ref{asm:tildeHSmallerthanH}.

\begin{assumption}\label{asm:tildeHSmallerthanH}
The matrices $H$ and $\tilde{H}$ are symmetric and $\operatorname{Null}(H)=\operatorname{Null}(\tilde{H})=S$, where $S$ is defined in \eqref{eq:equivprob}. In addition, $H\succeq \tilde{H}\succeq \mathbf{O}$.
\end{assumption}

In DAMM, DAMM-SC, and DAMM-SQ, we adopt $H=P\otimes I_d$ and $\tilde{H}=\tilde{P}\otimes I_d$, where $P$ and $\tilde{P}$ comply with Assumption~\ref{asm:weightmatrix}. Thus, as long as we further impose $\tilde{P}\preceq P$, Assumption~\ref{asm:tildeHSmallerthanH} holds. Moreover, all the existing specializations of AMM, DAMM, and DAMM-SQ in Section~\ref{sec:connection}, except DIGing \cite{Nedic17}, also need to satisfy Assumption~\ref{asm:tildeHSmallerthanH}. DIGing is not necessarily required to meet $H\succeq \tilde{H}$, yet it can easily satisfy $H\succeq \tilde{H}$ by letting its average matrix be symmetric positive semidefinite. 

In what follows, we let $(\mathbf{x}^k,\mathbf{v}^k)$ be generated by \eqref{eq:AMMprimal}--\eqref{eq:AMMdual}, and let $(\mathbf{x}^\star,\mathbf{v}^\star)$ be a primal-dual optimal solution pair of problem~\eqref{eq:eqprob}, which satisfies \eqref{eq:zgs_opt}. Also, let $\Lambda_M:=\operatorname{diag}(M_1, \ldots, M_N)\otimes I_d$, where $M_i>0$ is the Lipschitz constant of $\nabla f_i$. In addition, define
\begin{align*}
A^k:=\int_0^1 \nabla^2 u^k((1-s)\mathbf{x}^k+s\mathbf{x}^{k+1}) ds,\quad\forall k\ge 0.
\end{align*}
Note that every $A^k$ exists and is symmetric due to Assumption~\ref{asm:ukassumption}. Moreover, since $\nabla u^k(\mathbf{x}^k)=\nabla f(\mathbf{x}^k)$,
\begin{align}
&\nabla u^k(\mathbf{x}^{k+1})\nonumber\displaybreak[0]\\
=&\nabla u^k(\mathbf{x}^k)+\int_0^1 \nabla^2 u^k((1-s)\mathbf{x}^k+s\mathbf{x}^{k+1})(\mathbf{x}^{k+1}-\mathbf{x}^k)ds\nonumber\displaybreak[0]\\
=&\nabla f(\mathbf{x}^k)+A^k(\mathbf{x}^{k+1}-\mathbf{x}^k).\label{eq:meanvaluetheo}
\end{align}
Then, we introduce the following auxiliary lemma for deriving the main results.

\begin{lemma}\label{lemma:vrelationforboundedz}
Suppose Assumption~\ref{asm:problem}, Assumption~\ref{asm:ukassumption}, and Assumption~\ref{asm:tildeHSmallerthanH} hold. For any $\eta\in [0,1)$ and $k\ge 0$,
\begin{align}
&\frac{1}{\rho}\langle\mathbf{v}^k-\mathbf{v}^{k+1},\mathbf{v}^{k+1}-\mathbf{v}^\star\rangle \ge \eta\langle \mathbf{x}^k-\mathbf{x}^\star, \nabla f(\mathbf{x}^k)-\nabla f(\mathbf{x}^\star)\rangle\nonumber\displaybreak[0]\\
&+\langle\mathbf{x}^{k+1}-\mathbf{x}^\star,A^k(\mathbf{x}^{k+1}-\mathbf{x}^k)\rangle-\frac{\|\mathbf{x}^{k+1}-\mathbf{x}^k\|_{\Lambda_M}^2}{4(1-\eta)}.\label{eq:vrelationforboundedz}
\end{align}
\end{lemma}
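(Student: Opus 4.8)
The plan is to convert the left-hand side into primal quantities using the dual recursion \eqref{eq:AMMdual}, and then exploit the primal optimality condition \eqref{eq:AMMprimaloptimalitycondition}, monotonicity of $\partial h$, and co-coercivity of $\nabla f$. First I would rewrite the left-hand side. Since $\mathbf{v}^k-\mathbf{v}^{k+1}=-\rho\tilde{H}^{\frac{1}{2}}\mathbf{x}^{k+1}$ by \eqref{eq:AMMdual}, and $\tilde{H}^{\frac{1}{2}}\mathbf{x}^\star=\mathbf{0}$ because $\mathbf{x}^\star\in S=\operatorname{Null}(\tilde{H})=\operatorname{Null}(\tilde{H}^{\frac{1}{2}})$, the left-hand side equals $-\langle\mathbf{x}^{k+1}-\mathbf{x}^\star,\tilde{H}^{\frac{1}{2}}(\mathbf{v}^{k+1}-\mathbf{v}^\star)\rangle$. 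Substituting $\mathbf{v}^{k+1}-\mathbf{v}^\star=(\mathbf{v}^k-\mathbf{v}^\star)+\rho\tilde{H}^{\frac{1}{2}}\mathbf{x}^{k+1}$ and using $\tilde{H}\mathbf{x}^\star=\mathbf{0}$ splits it into $-\langle\mathbf{x}^{k+1}-\mathbf{x}^\star,\tilde{H}^{\frac{1}{2}}(\mathbf{v}^k-\mathbf{v}^\star)\rangle-\rho\|\mathbf{x}^{k+1}-\mathbf{x}^\star\|_{\tilde{H}}^2$.

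Second I would lower-bound the cross term $-\langle\mathbf{x}^{k+1}-\mathbf{x}^\star,\tilde{H}^{\frac{1}{2}}(\mathbf{v}^k-\mathbf{v}^\star)\rangle$. The primal optimality condition \eqref{eq:AMMprimaloptimalitycondition} supplies $g^{k+1}=-\nabla u^k(\mathbf{x}^{k+1})-\rho H\mathbf{x}^{k+1}-\tilde{H}^{\frac{1}{2}}\mathbf{v}^k\in\partial h(\mathbf{x}^{k+1})$, while \eqref{eq:zgs_opt} supplies $g^\star=-\tilde{H}^{\frac{1}{2}}\mathbf{v}^\star-\nabla f(\mathbf{x}^\star)\in\partial h(\mathbf{x}^\star)$; monotonicity of $\partial h$ then gives $\langle g^{k+1}-g^\star,\mathbf{x}^{k+1}-\mathbf{x}^\star\rangle\ge0$. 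After replacing $\nabla u^k(\mathbf{x}^{k+1})$ by the mean-value identity \eqref{eq:meanvaluetheo} and using $H\mathbf{x}^\star=\mathbf{0}$, this inequality rearranges into $-\langle\mathbf{x}^{k+1}-\mathbf{x}^\star,\tilde{H}^{\frac{1}{2}}(\mathbf{v}^k-\mathbf{v}^\star)\rangle\ge\langle\nabla f(\mathbf{x}^k)-\nabla f(\mathbf{x}^\star),\mathbf{x}^{k+1}-\mathbf{x}^\star\rangle+\langle\mathbf{x}^{k+1}-\mathbf{x}^\star,A^k(\mathbf{x}^{k+1}-\mathbf{x}^k)\rangle+\rho\|\mathbf{x}^{k+1}-\mathbf{x}^\star\|_H^2$. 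Combining with the first step and invoking $H\succeq\tilde{H}$ from Assumption~\ref{asm:tildeHSmallerthanH} to discard the nonnegative remainder $\rho\|\mathbf{x}^{k+1}-\mathbf{x}^\star\|_{H-\tilde{H}}^2$ leaves exactly the desired $A^k$ term together with $\langle\nabla f(\mathbf{x}^k)-\nabla f(\mathbf{x}^\star),\mathbf{x}^{k+1}-\mathbf{x}^\star\rangle$.

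Finally I would process this gradient inner product. Writing $\mathbf{x}^{k+1}-\mathbf{x}^\star=(\mathbf{x}^k-\mathbf{x}^\star)+(\mathbf{x}^{k+1}-\mathbf{x}^k)$ separates it into $\langle\nabla f(\mathbf{x}^k)-\nabla f(\mathbf{x}^\star),\mathbf{x}^k-\mathbf{x}^\star\rangle$ plus a cross term; a weighted Young inequality in the $\Lambda_M$-geometry, with parameter $1-\eta$, bounds the cross term below by $-\frac{\|\mathbf{x}^{k+1}-\mathbf{x}^k\|_{\Lambda_M}^2}{4(1-\eta)}-(1-\eta)\|\nabla f(\mathbf{x}^k)-\nabla f(\mathbf{x}^\star)\|_{\Lambda_M^{-1}}^2$. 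The crux is absorbing the last term: since each $f_i$ is convex and $M_i$-smooth, blockwise co-coercivity summed over $i\in\mathcal{V}$ yields $\langle\nabla f(\mathbf{x}^k)-\nabla f(\mathbf{x}^\star),\mathbf{x}^k-\mathbf{x}^\star\rangle\ge\|\nabla f(\mathbf{x}^k)-\nabla f(\mathbf{x}^\star)\|_{\Lambda_M^{-1}}^2$, so $(1-\eta)$ times this inequality exactly cancels the offending norm and leaves the factor $\eta$ multiplying $\langle\nabla f(\mathbf{x}^k)-\nabla f(\mathbf{x}^\star),\mathbf{x}^k-\mathbf{x}^\star\rangle$; assembling the three steps gives \eqref{eq:vrelationforboundedz}.

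I expect the main obstacle to be the sign bookkeeping in the second step, namely expanding $g^{k+1}-g^\star$, inserting the mean-value identity, and tracking the $\tilde{H}$- and $H$-quadratic terms so that $H\succeq\tilde{H}$ applies cleanly. The conceptual pivot, by contrast, is recognizing that $\Lambda_M^{-1}$-weighted co-coercivity of $\nabla f$ is precisely what the $(1-\eta)$-scaled Young residual demands, which is what forces the appearance of the factor $\eta$ and the denominator $4(1-\eta)$ in the claimed bound.
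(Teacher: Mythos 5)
Your proposal is correct and follows essentially the same route as the paper's proof: both convert the dual inner product into primal quantities via \eqref{eq:AMMdual}, invoke the optimality conditions \eqref{eq:AMMprimaloptimalitycondition} and \eqref{eq:zgs_opt} together with monotonicity of $\partial h$ and the mean-value identity \eqref{eq:meanvaluetheo}, discard the nonnegative $H\!-\!\tilde{H}$ remainder, and finish with the same blockwise weighted Young plus $\Lambda_M^{-1}$-co-coercivity argument that produces the factor $\eta$ and the $4(1-\eta)$ denominator. The only differences are cosmetic (you expand $\mathbf{v}^{k+1}-\mathbf{v}^\star$ directly, whereas the paper works with $\mathbf{v}^k-\mathbf{v}^\star$ and then subtracts $\|\mathbf{v}^{k+1}-\mathbf{v}^k\|^2$, and you state the final step in vectorized rather than coordinate-wise form).
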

\begin{proof}
	See Appendix \ref{ssec:proofvrelationforboundedz}.
\end{proof}

\subsection{Convergence under General Convexity}\label{ssec:sublinear}

In this subsection, we provide the convergence rates of AMM in solving the general convex problem \eqref{eq:prob}.

Let $\bar{\mathbf{x}}^k\!=\!\frac{1}{k}\sum_{t=1}^k \mathbf{x}^t$ $\forall k\ge 1$ be the running average of $\mathbf{x}^t$ from $t=1$ to $t=k$. Below, we derive sublinear convergence rates for (i) the consensus error $\|\tilde{H}^{\frac{1}{2}}\bar{\mathbf{x}}^k\|$, which represents the infeasibility of $\bar{\mathbf{x}}^k$ in solving the equivalent problem~\eqref{eq:eqprob}, and (ii) the objective error $|f(\bar{\mathbf{x}}^k)+h(\bar{\mathbf{x}}^k)-f(\mathbf{x}^\star)-h(\mathbf{x}^\star)|$, which is a direct measure of optimality. Throughout Section~\ref{ssec:sublinear}, we tentatively consider the following type of surrogate function that fulfills Assumption~\ref{asm:ukassumption}:
\begin{equation}\label{eq:constantmatrix}
u^k(\mathbf{x})=\frac{1}{2}\|\mathbf{x}-\mathbf{x}^k\|_A^2+\langle \nabla f(\mathbf{x}^k), \mathbf{x}\rangle, \quad\forall k\ge 0,
\end{equation}
where $A\in \mathbb{R}^{Nd\times Nd}$ satisfies $A=A^T$, $A\succeq \mathbf{O}$, and $A+\rho H\succ \mathbf{O}$. Such a choice of $u^k$ results in $A^k=A$ $\forall k\ge 0$. 

Note that AMM endowed with \eqref{eq:constantmatrix} still generalizes most existing algorithms discussed in Section~\ref{sec:connection}, including EXTRA \cite{ShiW15}, ID-FBBS \cite{Xu18}, PGC \cite{Hong17}, PG-EXTRA \cite{ShiW15a}, DPGA \cite{Aybat18}, the decentralized ADMM in \cite{Shi14}, D-FBBS \cite{Xu18}, the distributed ADMM in \cite{Makhdoumi17}, the distributed primal-dual algorithm in \cite{Lei16}, and DIGing \cite{Nedic17}. Although DQM \cite{Mokhtari16a} and ESOM \cite{Mokhtari16} are specialized from AMM with different $u^k$'s other than \eqref{eq:constantmatrix}, they all require problem~\eqref{eq:prob} to be strongly convex and smooth---In such a case, we will provide convergence rates for the general form of AMM in Section~\ref{ssec:linear}.

Now we bound $\|\tilde{H}^{\frac{1}{2}}\bar{\mathbf{x}}^k\|$ and $|f(\bar{\mathbf{x}}^k)+h(\bar{\mathbf{x}}^k)-f(\mathbf{x}^\star)-h(\mathbf{x}^\star)|$. This plays a key role in acquiring the rates at which these errors vanish.

\begin{lemma}\label{lemma:basicfuncval}
	Suppose Assumption~\ref{asm:problem}, Assumption~\ref{asm:tildeHSmallerthanH}, and \eqref{eq:constantmatrix} hold. For any $k\ge 1$,
	\begin{align}
	&\|\tilde{H}^{\frac{1}{2}}\bar{\mathbf{x}}^k\|=\frac{1}{\rho k}\|\mathbf{v}^k-\mathbf{v}^0\|,\label{eq:consensuserrornonstronglyconvex}\displaybreak[0]\\
	&f(\bar{\mathbf{x}}^k)\!+\!h(\bar{\mathbf{x}}^k)\!-\!f(\mathbf{x}^\star)\!-\!h(\mathbf{x}^\star)\nonumber\displaybreak[0]\\
	&\le\frac{1}{k}\!\Bigl(\frac{\|\mathbf{v}^0\|^2}{2\rho}\!+\!\frac{\|\mathbf{x}^0\!-\!\mathbf{x}^\star\|_A^2}{2}\!+\!\sum_{t=0}^{k-1}\!\frac{\|\mathbf{x}^{t+1}\!-\!\mathbf{x}^t\|_{\Lambda_M\!-\!A}^2}{2}\Bigr),\label{eq:general_funcvalle}\displaybreak[0]\\
	&f(\bar{\mathbf{x}}^k)\!+\!h(\bar{\mathbf{x}}^k)\!-\!f(\mathbf{x}^\star)\!-\!h(\mathbf{x}^\star)\!\ge -\!\frac{\|\mathbf{v}^\star\|\cdot\|\mathbf{v}^k-\mathbf{v}^0\|}{\rho k}.\label{eq:general_funcvalge}
	\end{align}
\end{lemma}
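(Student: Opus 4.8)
The plan is to exploit the fixed-point structure of AMM together with the special quadratic surrogate~\eqref{eq:constantmatrix}, and then telescope over the iterations. First I would establish the consensus-error identity~\eqref{eq:consensuserrornonstronglyconvex}. From the dual update~\eqref{eq:AMMdual} we have $\mathbf{v}^{t+1}-\mathbf{v}^t=\rho\tilde{H}^{\frac{1}{2}}\mathbf{x}^{t+1}$, so summing from $t=0$ to $t=k-1$ gives $\mathbf{v}^k-\mathbf{v}^0=\rho\tilde{H}^{\frac{1}{2}}\sum_{t=1}^k\mathbf{x}^t=\rho k\,\tilde{H}^{\frac{1}{2}}\bar{\mathbf{x}}^k$. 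Taking norms and dividing by $\rho k$ yields~\eqref{eq:consensuserrornonstronglyconvex} immediately; this part is routine.

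The substantive work is the upper bound~\eqref{eq:general_funcvalge}. The key step is to convert the optimality condition~\eqref{eq:AMMprimaloptimalitycondition} of the primal update into a per-iteration inequality relating the objective value $f(\mathbf{x}^{t+1})+h(\mathbf{x}^{t+1})$ to $f(\mathbf{x}^\star)+h(\mathbf{x}^\star)$. Using~\eqref{eq:constantmatrix}, we have $\nabla u^t(\mathbf{x}^{t+1})=A(\mathbf{x}^{t+1}-\mathbf{x}^t)+\nabla f(\mathbf{x}^t)$, so there exists $g^{t+1}\in\partial h(\mathbf{x}^{t+1})$ with $A(\mathbf{x}^{t+1}-\mathbf{x}^t)+\nabla f(\mathbf{x}^t)+\rho H\mathbf{x}^{t+1}+\tilde{H}^{\frac{1}{2}}\mathbf{v}^t+g^{t+1}=\mathbf{0}$. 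I would pair this with the convexity inequalities $f(\mathbf{x}^{t+1})\le f(\mathbf{x}^\star)+\langle\nabla f(\mathbf{x}^{t+1}),\mathbf{x}^{t+1}-\mathbf{x}^\star\rangle$ and $h(\mathbf{x}^{t+1})\le h(\mathbf{x}^\star)+\langle g^{t+1},\mathbf{x}^{t+1}-\mathbf{x}^\star\rangle$. The gradient-gap term $\langle\nabla f(\mathbf{x}^{t+1})-\nabla f(\mathbf{x}^t),\mathbf{x}^{t+1}-\mathbf{x}^\star\rangle$ is then controlled by $M_i$-smoothness, producing the $\tfrac12\|\mathbf{x}^{t+1}-\mathbf{x}^t\|_{\Lambda_M}^2$ contribution, while the $\rho H\mathbf{x}^{t+1}$ and $\tilde{H}^{\frac12}\mathbf{v}^t$ terms are rewritten via $H\mathbf{x}^\star=\mathbf{0}$, $\tilde{H}^{\frac12}\mathbf{x}^\star=\mathbf{0}$, and the dual recursion so that they telescope. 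The $A$-weighted inner products combine into a difference of squared $A$-norms using the polarization identity $\langle A(\mathbf{x}^{t+1}-\mathbf{x}^t),\mathbf{x}^{t+1}-\mathbf{x}^\star\rangle=\tfrac12(\|\mathbf{x}^{t+1}-\mathbf{x}^\star\|_A^2-\|\mathbf{x}^t-\mathbf{x}^\star\|_A^2+\|\mathbf{x}^{t+1}-\mathbf{x}^t\|_A^2)$.

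After summing the per-iteration inequality from $t=0$ to $t=k-1$, the telescoping $A$-norm and dual-variable terms collapse to their initial values $\tfrac12\|\mathbf{x}^0-\mathbf{x}^\star\|_A^2$ and $\tfrac{1}{2\rho}\|\mathbf{v}^0\|^2$ (the $\mathbf{v}^k$ tail dropping by positive semidefiniteness), while the residual step terms assemble into $\sum_{t=0}^{k-1}\tfrac12\|\mathbf{x}^{t+1}-\mathbf{x}^t\|_{\Lambda_M-A}^2$. Finally I would invoke Jensen's inequality, using convexity of $f+h$, to replace $\tfrac1k\sum_{t=1}^k(f(\mathbf{x}^t)+h(\mathbf{x}^t))$ by $f(\bar{\mathbf{x}}^k)+h(\bar{\mathbf{x}}^k)$ on the left, which yields~\eqref{eq:general_funcvalge}. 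For the lower bound~\eqref{eq:general_funcvalge}, I would use the saddle-point property: since $\mathbf{v}^\star$ is dual optimal, weak duality gives $f(\bar{\mathbf{x}}^k)+h(\bar{\mathbf{x}}^k)-f(\mathbf{x}^\star)-h(\mathbf{x}^\star)\ge-\langle\mathbf{v}^\star,\tilde{H}^{\frac12}\bar{\mathbf{x}}^k\rangle$, and then bound the right side by $-\|\mathbf{v}^\star\|\cdot\|\tilde{H}^{\frac12}\bar{\mathbf{x}}^k\|$ via Cauchy--Schwarz and substitute~\eqref{eq:consensuserrornonstronglyconvex}. I expect the main obstacle to be the careful bookkeeping in the telescoping step: ensuring the dual cross-terms $\langle\tilde{H}^{\frac12}\mathbf{v}^t,\mathbf{x}^{t+1}\rangle$ recombine cleanly with the dual recursion so that only the $\|\mathbf{v}^0\|^2$ boundary term survives with the correct $\tfrac{1}{2\rho}$ coefficient, rather than leaving uncontrolled residuals.
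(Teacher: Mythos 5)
Your overall architecture coincides with the paper's: the identity \eqref{eq:consensuserrornonstronglyconvex} by telescoping the dual update, a per-iteration inequality obtained from the optimality of $\mathbf{x}^{t+1}$ in \eqref{eq:AMMprimal} that telescopes in the $A$-weighted distance and in $\|\mathbf{v}^t\|^2/(2\rho)$ (the paper packages this as strong convexity of the proximal objective $J^t$ with respect to $A$ rather than as the first-order condition plus polarization, but these are the same computation), Jensen's inequality for the running average, and the Lagrangian saddle-point argument plus Cauchy--Schwarz for the lower bound. The use of $H\succeq\tilde H$ to absorb the $\tfrac{\rho}{2}\|\mathbf{x}^{t+1}\|_{\tilde H}^2$ residual from the dual recursion into the quadratic penalty is also present in your sketch, as in the paper's inequality \eqref{eq:xk1tildeHxvk}.

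There is, however, one step that fails as written. You bound $f(\mathbf{x}^{t+1})-f(\mathbf{x}^\star)$ by convexity \emph{at} $\mathbf{x}^{t+1}$, i.e.\ by $\langle\nabla f(\mathbf{x}^{t+1}),\mathbf{x}^{t+1}-\mathbf{x}^\star\rangle$, and then claim that the resulting mismatch $\langle\nabla f(\mathbf{x}^{t+1})-\nabla f(\mathbf{x}^t),\mathbf{x}^{t+1}-\mathbf{x}^\star\rangle$ is controlled by smoothness and yields $\tfrac12\|\mathbf{x}^{t+1}-\mathbf{x}^t\|_{\Lambda_M}^2$. This cross term cannot be bounded by a quantity depending only on $\mathbf{x}^{t+1}-\mathbf{x}^t$: it scales linearly in $\|\mathbf{x}^{t+1}-\mathbf{x}^\star\|$, which is not controlled at this point of the argument (take $f$ quadratic with $\nabla f(\mathbf{x}^{t+1})\neq\nabla f(\mathbf{x}^t)$ and move $\mathbf{x}^\star$ far away in the aligned direction). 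The correct route, which is what the paper does in \eqref{eq:fxk1minusfstarupperbound}, never evaluates $\nabla f$ at $\mathbf{x}^{t+1}$: split $f(\mathbf{x}^{t+1})-f(\mathbf{x}^\star)=\bigl(f(\mathbf{x}^{t+1})-f(\mathbf{x}^t)\bigr)+\bigl(f(\mathbf{x}^t)-f(\mathbf{x}^\star)\bigr)$, apply the descent lemma to the first bracket and convexity at $\mathbf{x}^t$ to the second, obtaining $f(\mathbf{x}^{t+1})-f(\mathbf{x}^\star)\le\langle\nabla f(\mathbf{x}^t),\mathbf{x}^{t+1}-\mathbf{x}^\star\rangle+\tfrac12\|\mathbf{x}^{t+1}-\mathbf{x}^t\|_{\Lambda_M}^2$, which pairs directly with the $\nabla f(\mathbf{x}^t)$ appearing in the optimality condition. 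With this repair the rest of your telescoping goes through and reproduces \eqref{eq:general_funcvalle}. (Also note the labels in your text: the upper bound you derive is \eqref{eq:general_funcvalle}, not \eqref{eq:general_funcvalge}.)
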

\begin{proof}
	See Appendix \ref{ssec:proofofbasicfuncval}.
\end{proof}

Observe from Lemma~\ref{lemma:basicfuncval} that as long as $\|\mathbf{v}^k-\mathbf{v}^0\|$ and $\sum_{t=0}^{k-1} \|\mathbf{x}^{t+1}-\mathbf{x}^t\|_{\Lambda_M-A}^2$ are bounded, $\|\tilde{H}^{\frac{1}{2}}\bar{\mathbf{x}}^k\|$ and $|f(\bar{\mathbf{x}}^k)+h(\bar{\mathbf{x}}^k)-f(\mathbf{x}^\star)-h(\mathbf{x}^\star)|$ are guaranteed to converge to $0$. The following lemma assures this is true with the help of Lemma~\ref{lemma:vrelationforboundedz}, in which $\mathbf{z}^k:=((\mathbf{x}^k)^T,(\mathbf{v}^k)^T)^T$ $\forall k\ge 0$, $\mathbf{z}^\star:=((\mathbf{x}^\star)^T,(\mathbf{v}^\star)^T)^T$, and $G:=\operatorname{diag}(A,I_{Nd}/\rho)$. 

\begin{lemma}\label{lemma:upperboundconstantweight}
	Suppose all the conditions in Lemma~\ref{lemma:basicfuncval} hold. Also suppose that $A \succ \Lambda_M/2$. For any $k\ge 1$,
	\begin{align}
	&\|\mathbf{v}^k-\mathbf{v}^0\|\le \|\mathbf{v}^0-\mathbf{v}^\star\|+\sqrt{\rho}\|\mathbf{z}^0-\mathbf{z}^\star\|_G,\label{eq:vkv0upperboundconstant}\displaybreak[0]\\
	&\sum_{t=0}^{k-1} \|\mathbf{x}^{t+1}-\mathbf{x}^t\|_{\Lambda_M-A}^2\le \frac{\|\mathbf{z}^0-\mathbf{z}^\star\|_G^2}{1-\underline{\sigma}},\label{eq:xt1xtupperboundconstant}
	\end{align}
	where $\underline{\sigma}:=\min\{\sigma|~\sigma A\succeq \Lambda_M/2\}\in (0,1)$.
\end{lemma}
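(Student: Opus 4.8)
The plan is to instantiate Lemma~\ref{lemma:vrelationforboundedz} with the constant surrogate \eqref{eq:constantmatrix} (so that $A^k=A$ for all $k$) and the particular choice $\eta=0$, and then convert \eqref{eq:vrelationforboundedz} into a one-step contraction of the Lyapunov function $\|\mathbf{z}^k-\mathbf{z}^\star\|_G^2=\|\mathbf{x}^k-\mathbf{x}^\star\|_A^2+\frac{1}{\rho}\|\mathbf{v}^k-\mathbf{v}^\star\|^2$. Concretely, I would rewrite the two inner products in \eqref{eq:vrelationforboundedz} via the polarization identities
\begin{align*}
\langle \mathbf{v}^k-\mathbf{v}^{k+1},\mathbf{v}^{k+1}-\mathbf{v}^\star\rangle&=\tfrac12\bigl(\|\mathbf{v}^k-\mathbf{v}^\star\|^2-\|\mathbf{v}^{k+1}-\mathbf{v}^\star\|^2-\|\mathbf{v}^k-\mathbf{v}^{k+1}\|^2\bigr),\\
\langle \mathbf{x}^{k+1}\!-\!\mathbf{x}^\star,A(\mathbf{x}^{k+1}\!-\!\mathbf{x}^k)\rangle&=\tfrac12\bigl(\|\mathbf{x}^{k+1}\!-\!\mathbf{x}^\star\|_A^2-\|\mathbf{x}^k\!-\!\mathbf{x}^\star\|_A^2+\|\mathbf{x}^{k+1}\!-\!\mathbf{x}^k\|_A^2\bigr),
\end{align*}
drop the nonnegative term $\eta\langle\cdots\rangle$ (which vanishes for $\eta=0$), and regroup the $(\mathbf{x}^{k+1}-\mathbf{x}^k)$ contributions using $-\tfrac12\|\cdot\|_A^2+\tfrac14\|\cdot\|_{\Lambda_M}^2=-\tfrac12\|\cdot\|_{A-\Lambda_M/2}^2$. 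This should give the key per-step inequality
\begin{align*}
\tfrac12\|\mathbf{z}^{k+1}-\mathbf{z}^\star\|_G^2\le \tfrac12\|\mathbf{z}^k-\mathbf{z}^\star\|_G^2-\tfrac{1}{2\rho}\|\mathbf{v}^k-\mathbf{v}^{k+1}\|^2-\tfrac12\|\mathbf{x}^{k+1}-\mathbf{x}^k\|_{A-\Lambda_M/2}^2.
\end{align*}

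For \eqref{eq:vkv0upperboundconstant}, I would note that since $A\succ\Lambda_M/2$ and the dual term is nonnegative, the right-hand side is at most $\tfrac12\|\mathbf{z}^k-\mathbf{z}^\star\|_G^2$, so $\|\mathbf{z}^k-\mathbf{z}^\star\|_G$ is nonincreasing and hence $\|\mathbf{z}^k-\mathbf{z}^\star\|_G\le\|\mathbf{z}^0-\mathbf{z}^\star\|_G$. Because $G=\operatorname{diag}(A,I_{Nd}/\rho)$ yields $\tfrac1\rho\|\mathbf{v}^k-\mathbf{v}^\star\|^2\le\|\mathbf{z}^k-\mathbf{z}^\star\|_G^2$, this gives $\|\mathbf{v}^k-\mathbf{v}^\star\|\le\sqrt\rho\|\mathbf{z}^0-\mathbf{z}^\star\|_G$, and \eqref{eq:vkv0upperboundconstant} then follows from $\|\mathbf{v}^k-\mathbf{v}^0\|\le\|\mathbf{v}^k-\mathbf{v}^\star\|+\|\mathbf{v}^0-\mathbf{v}^\star\|$.

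For \eqref{eq:xt1xtupperboundconstant}, I would sum the per-step inequality over $t=0,\dots,k-1$, discard the nonnegative dual and endpoint $G$-norm terms, and obtain $\sum_{t=0}^{k-1}\|\mathbf{x}^{t+1}-\mathbf{x}^t\|_{A-\Lambda_M/2}^2\le\|\mathbf{z}^0-\mathbf{z}^\star\|_G^2$. It then remains to pass from the weight $A-\Lambda_M/2$ to the weight $\Lambda_M-A$ at the cost of the factor $1/(1-\underline\sigma)$, which is the crux and reduces to the linear matrix inequality
\begin{align*}
\Lambda_M-A\preceq \tfrac{1}{1-\underline\sigma}\bigl(A-\tfrac{\Lambda_M}{2}\bigr).
\end{align*}
I would prove this by clearing the positive denominator, reducing to $(2-\underline\sigma)A-(\tfrac32-\underline\sigma)\Lambda_M\succeq\mathbf{O}$, and invoking the defining property $2\underline\sigma A\succeq\Lambda_M$ of $\underline\sigma$: since $\tfrac32-\underline\sigma>0$ we get $(\tfrac32-\underline\sigma)\Lambda_M\preceq(3\underline\sigma-2\underline\sigma^2)A$, whence the left side is bounded below by $(2-4\underline\sigma+2\underline\sigma^2)A=2(1-\underline\sigma)^2A\succeq\mathbf{O}$. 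Applying this inequality termwise to the summed bound yields \eqref{eq:xt1xtupperboundconstant}.

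The step I expect to be the main obstacle is this last one, namely securing the \emph{exact} constant $1/(1-\underline\sigma)$. The key is that $\eta=0$ must be used: the seemingly natural choice $\eta=1-\underline\sigma$ (sufficient for monotonicity alone) produces the weight $A-\Lambda_M/(2\underline\sigma)$, which degenerates precisely along the top eigendirection of $\Lambda_M$ relative to $A$ and therefore cannot dominate $\Lambda_M-A$ once $\underline\sigma>\tfrac12$. With $\eta=0$ the reduction collapses to the clean nonnegative residual $2(1-\underline\sigma)^2A$, and along the way I would record that $A\succ\Lambda_M/2$ (with $\Lambda_M\succ\mathbf O$) forces $A\succ\mathbf O$ and $\underline\sigma\in(0,1)$, so that both $1-\underline\sigma>0$ and $\tfrac32-\underline\sigma>0$ are legitimate.
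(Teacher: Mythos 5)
Your proposal is correct and follows essentially the same route as the paper: both invoke Lemma~\ref{lemma:vrelationforboundedz} with $A^k=A$ and $\eta=0$, polarize the two inner products into a one-step decrease of $\|\mathbf{z}^k-\mathbf{z}^\star\|_G^2$ with residual $\frac{1}{\rho}\|\mathbf{v}^{k+1}-\mathbf{v}^k\|^2+\|\mathbf{x}^{k+1}-\mathbf{x}^k\|_{A-\Lambda_M/2}^2$, and then telescope. The only (cosmetic) difference is the final weight conversion: the paper chains $\|\cdot\|_{A-\Lambda_M/2}^2\ge(1-\underline{\sigma})\|\cdot\|_A^2$ with $\Lambda_M-A\preceq A$, whereas you verify the combined inequality $\Lambda_M-A\preceq\frac{1}{1-\underline{\sigma}}(A-\Lambda_M/2)$ directly via the identity $(2-\underline{\sigma})A-(\tfrac32-\underline{\sigma})\Lambda_M\succeq2(1-\underline{\sigma})^2A$ — both are valid and yield the same constant.
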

\begin{proof}
	See Appendix \ref{ssec:proofofupperboundconstantweight}.
\end{proof}

The following theorem results from Lemma~\ref{lemma:basicfuncval} and Lemma~\ref{lemma:upperboundconstantweight}, which provides $O(1/k)$ rates for both the consensus error and the objective error at the running average $\bar{\mathbf{x}}^k$.

\begin{theorem}\label{theo:convergence}
Suppose all the conditions in Lemma~\ref{lemma:upperboundconstantweight} hold. For any $k\ge 1$,
\begin{align*}
&\|\tilde{H}^{\frac{1}{2}}\bar{\mathbf{x}}^k\|\le \frac{\|\mathbf{v}^0-\mathbf{v}^\star\|+\sqrt{\rho}\|\mathbf{z}^0-\mathbf{z}^\star\|_G}{\rho k},\displaybreak[0]\\
&f(\bar{\mathbf{x}}^k)\!+\!h(\bar{\mathbf{x}}^k)\!-\!f(\mathbf{x}^\star)\!-\!h(\mathbf{x}^\star)\displaybreak[0]\\
&\le \frac{1}{2k}\left(\frac{\|\mathbf{v}^0\|^2}{\rho}\!+\!\|\mathbf{x}^0-\mathbf{x}^\star\|_A^2\!+\!\frac{\|\mathbf{z}^0-\mathbf{z}^\star\|_G^2}{1-\underline{\sigma}}\right),\displaybreak[0]\\
&f(\bar{\mathbf{x}}^k)+h(\bar{\mathbf{x}}^k)-f(\mathbf{x}^\star)-h(\mathbf{x}^\star)\displaybreak[0]\\
&\ge -\frac{\|\mathbf{v}^\star\|(\|\mathbf{v}^0-\mathbf{v}^\star\|+\sqrt{\rho}\|\mathbf{z}^0-\mathbf{z}^\star\|_G)}{\rho k}.
\end{align*}
\end{theorem}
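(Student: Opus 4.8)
The plan is to obtain all three bounds by a direct combination of the two preceding lemmas, since Theorem~\ref{theo:convergence} inherits every hypothesis of both Lemma~\ref{lemma:basicfuncval} and Lemma~\ref{lemma:upperboundconstantweight}. The strategy is to treat Lemma~\ref{lemma:basicfuncval} as supplying an exact expression (or a one-sided bound) for each error in terms of the two quantities $\|\mathbf{v}^k-\mathbf{v}^0\|$ and $\sum_{t=0}^{k-1}\|\mathbf{x}^{t+1}-\mathbf{x}^t\|_{\Lambda_M-A}^2$, and then to replace these two quantities by the uniform-in-$k$ upper bounds furnished by Lemma~\ref{lemma:upperboundconstantweight}. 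Each displayed inequality in the theorem then falls out by a single substitution.

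Concretely, for the consensus error I would start from the identity \eqref{eq:consensuserrornonstronglyconvex}, namely $\|\tilde{H}^{\frac{1}{2}}\bar{\mathbf{x}}^k\|=\frac{1}{\rho k}\|\mathbf{v}^k-\mathbf{v}^0\|$, and insert \eqref{eq:vkv0upperboundconstant} to obtain the stated $O(1/k)$ rate. For the objective upper bound I would take \eqref{eq:general_funcvalle} and bound only its last term via \eqref{eq:xt1xtupperboundconstant}; pulling the common factor $\tfrac12$ to the front then reproduces the claimed expression $\frac{1}{2k}\bigl(\frac{\|\mathbf{v}^0\|^2}{\rho}+\|\mathbf{x}^0-\mathbf{x}^\star\|_A^2+\frac{\|\mathbf{z}^0-\mathbf{z}^\star\|_G^2}{1-\underline{\sigma}}\bigr)$. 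For the lower bound I would take \eqref{eq:general_funcvalge} and again substitute \eqref{eq:vkv0upperboundconstant} for the factor $\|\mathbf{v}^k-\mathbf{v}^0\|$.

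The only points requiring attention are bookkeeping ones: checking that the hypotheses align (in particular, that the assumption $A\succ\Lambda_M/2$ of Lemma~\ref{lemma:upperboundconstantweight} gives $\underline{\sigma}\in(0,1)$, so the right-hand side of \eqref{eq:xt1xtupperboundconstant} is finite and the quadratic form with $\Lambda_M-A$ in \eqref{eq:general_funcvalle} is treated consistently) and matching the numerical constants once the $\tfrac12$'s are factored out. I do not expect a genuine obstacle within the theorem itself; the analytical content has already been absorbed into Lemma~\ref{lemma:vrelationforboundedz} and the telescoping/descent arguments underlying Lemmas~\ref{lemma:basicfuncval} and~\ref{lemma:upperboundconstantweight}. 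Thus the theorem is essentially an assembly step, and I would present it as three short substitutions rather than a fresh argument.
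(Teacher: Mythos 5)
Your proposal is correct and matches the paper's proof exactly: the paper's argument for Theorem~\ref{theo:convergence} is precisely the substitution of \eqref{eq:vkv0upperboundconstant}--\eqref{eq:xt1xtupperboundconstant} into \eqref{eq:consensuserrornonstronglyconvex}--\eqref{eq:general_funcvalge}. The bookkeeping points you flag (finiteness via $\underline{\sigma}\in(0,1)$ and the factor of $\tfrac12$) are handled implicitly in the paper and pose no issue.
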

\begin{proof}
Substitute \eqref{eq:vkv0upperboundconstant}--\eqref{eq:xt1xtupperboundconstant} into \eqref{eq:consensuserrornonstronglyconvex}--\eqref{eq:general_funcvalge}.
\end{proof}

Among the existing distributed algorithms that are able to solve \emph{nonsmooth} convex optimization problems with \emph{non-strongly} convex objective functions (e.g., \cite{Zeng17,LiuYH19,LiZ19,Makhdoumi17,Aybat18,ShiW15a,Xu18,Hong17,Bianchi16,Nedic15,Xi17,Scutari19,Lorenzo16,Notarnicola2020,Daneshmand19,Xu20}), the best convergence rates are of $O(1/k)$, achieved by \cite{Makhdoumi17,Aybat18,Hong17,ShiW15a,Xu18,LiZ19,Scutari19,Xu20}. Indeed, AMM with all the conditions in Theorem~\ref{theo:convergence} generalizes the algorithms in \cite{Makhdoumi17,Aybat18,Hong17,ShiW15a,Xu18} as well as their parameter conditions. Like Theorem~\ref{theo:convergence}, \cite{Makhdoumi17,Aybat18,Hong17,Xu20} achieve $O(1/k)$ rates for both the objective error and the consensus error, whereas \cite{ShiW15a,Xu18,LiZ19,Scutari19} reach $O(1/k)$ rates in terms of optimality residuals that implicitly reflect deviations from an optimality condition, and they only guarantee $O(1/\sqrt{k})$ rates for the consensus error. Also note that \cite{Scutari19,Xu20} only address problem~\eqref{eq:prob} with identical $h_i$'s. 

Theorem~\ref{theo:convergence} provides new convergence results for some existing algorithms generalized by AMM. In particular, the $O(1/k)$ rate in terms of the objective error is new to PG-EXTRA \cite{ShiW15a} and D-FBBS \cite{Xu18} for nonsmooth problems as well as EXTRA \cite{ShiW15} and ID-FBBS \cite{Xu18} for smooth problems, which only had $O(1/k)$ rates with respect to optimality residuals before. Moreover, Theorem~\ref{theo:convergence} improves their $O(1/\sqrt{k})$ rates in reaching consensus to $O(1/k)$. Furthermore, Theorem~\ref{theo:convergence} extends the $O(1/k)$ rate of the distributed primal-dual algorithm in \cite{Lei16} for constrained smooth convex optimization to general nonsmooth convex problems, and allows PGC \cite{Hong17} to establish its $O(1/k)$ rate without the originally required condition that each $h_i$ needs to have a compact domain. 

Finally, we illustrate how the sublinear rates in Theorem~\ref{theo:convergence} are influenced by the graph topology. For simplicity, set $\mathbf{v}^0=\mb{0}$ and $\tilde{H}\preceq I_{Nd}$. Also, let $g^\star\in \partial h(\mathbf{x}^\star)$ be such that $\nabla f(\mathbf{x}^\star)+g^\star\in S^\perp$, which exists because of the optimality condition $-\sum_{i\in\mathcal{V}} \nabla f_i(x^\star)\in (\partial\sum_{i\in\mathcal{V}} h_i)(x^\star)$, and let $\mb{v}^\star=-(\tilde{H}^{\frac{1}{2}})^\dag(\nabla f(\mathbf{x}^\star)+g^\star)$. It follows from \eqref{eq:zgs_opt} that $\mb{v}^\star$ is an optimal solution to the Lagrange dual of \eqref{eq:eqprob}. Then, $\|\mb{v}^0-\mb{v}^\star\|=\|\mb{v}^\star\|\le \frac{1}{\sqrt{\lambda_{\tilde{H}}}}\|\nabla f(\mathbf{x}^\star)+g^\star\|$, where $\lambda_{\tilde{H}}\in(0,1]$ is the smallest nonzero eigenvalue of $\tilde{H}$. Note that $\|\mb{z}^0-\mb{z}^\star\|_G=(\|\mb{x}^0-\mb{x}^\star\|_A^2+\frac{1}{\rho}\|\mb{v}^0-\mb{v}^\star\|^2)^{\frac{1}{2}}\le\|\mb{x}^0-\mb{x}^\star\|_A+\frac{1}{\sqrt{\rho}}\|\mb{v}^0-\mb{v}^\star\|$. Thus, by letting $\rho=\frac{1}{\sqrt{\lambda_{\tilde{H}}}}$, we have $|f(\bar{\mathbf{x}}^k)+h(\bar{\mathbf{x}}^k)-f(\mathbf{x}^\star)-h(\mathbf{x}^\star)|\le O(\frac{1}{k\sqrt{\lambda_{\tilde{H}}}})$. In addition, $\|P_{S^\perp}[\bar{\mb{x}}^k]\|\le\frac{1}{\sqrt{\lambda_{\tilde{H}}}}\|P_{S^\perp}[\bar{\mb{x}}^k]\|_{\tilde{H}}=\frac{1}{\sqrt{\lambda_{\tilde{H}}}}\|\tilde{H}^{\frac{1}{2}}\bar{\mathbf{x}}^k\|\le O(\frac{1}{k\sqrt{\lambda_{\tilde{H}}}})$. Here, $\|P_{S^\perp}[\bar{\mb{x}}^k]\|$ plays a similar role as $\|\tilde{H}^{\frac{1}{2}}\bar{\mathbf{x}}^k\|$ in quantifying the consensus error. Therefore, \emph{denser network connectivity}, which is often indicated by larger $\lambda_{\tilde{H}}$, \emph{can yield faster convergence} of both the objective error and the consensus error (measured by $\|P_{S^\perp}[\bar{\mb{x}}^k]\|$). 

Compared to the existing specializations of AMM that also have $O(1/k)$ rates in solving \eqref{eq:prob}, the above $O(1/(k\sqrt{\lambda_{\tilde{H}}}))$ rate has better dependence on $\lambda_{\tilde{H}}\in(0,1]$ than the $O(1/(k\lambda_{\tilde{H}}))$ rate of DPGA \cite{Aybat18} and the $O(1/(k\lambda_{\tilde{H}}^2))$ rate of the distributed ADMM \cite{Makhdoumi17}. The remaining specializations discussed in Section~\ref{sec:connection} have not been provided with similar dependence of their convergence rates on the graph topology.


\subsection{Convergence under Local Restricted Strong Convexity}\label{ssec:linear}

In this subsection, we additionally impose a problem assumption and acquire the convergence rates of AMM. Henceforth, we no longer restrict $u^k$ to the form of \eqref{eq:constantmatrix}.

\begin{assumption}\label{asm:stronglyconvex}
The function $\sum_{i\in \mathcal{V}} f_i(x)$ is locally restricted strongly convex with respect to $x^\star$, where $x^\star$ is an optimum of problem~\eqref{eq:prob}, i.e., for any convex and compact set $C\subset\mathbb{R}^d$ containing $x^\star$, $\sum_{i\in \mathcal{V}} f_i(x)$ is restricted strongly convex with respect to $x^\star$ on $C$ with convexity parameter $m_{C}>0$. 
\end{assumption}

Restricted strong convexity has been studied in the literature. For example, \cite{YuanK16,ShiW15} consider its global version and \cite{WuX20} considers the local version as Assumption~\ref{asm:stronglyconvex}. Under Assumption~\ref{asm:stronglyconvex}, problem~\eqref{eq:prob} has a unique optimum $x^\star$, and problem~\eqref{eq:eqprob} has a unique optimum $\mathbf{x}^\star=((x^\star)^T,\ldots,(x^\star)^T)^T\in S$ \cite{WuX20}. Furthermore, Assumption \ref{asm:stronglyconvex} is less restricted than the standard global strong convexity condition. For instance, the objective function of logistic regression \cite{Bach14} is not globally strongly convex but meets Assumption \ref{asm:stronglyconvex}.

\begin{proposition}\label{prop:strongconvex}
Suppose Assumption~\ref{asm:problem} holds. If there exists $\epsilon>0$ such that each $f_i$ is twice continuously differentiable on the ball $B(x^\star,\epsilon):=\{x|~\|x-x^\star\|\le \epsilon\}$ and $\sum_{i\in\mathcal{V}}\nabla^2 f_i(x^\star)$ is positive definite, then Assumption~\ref{asm:stronglyconvex} holds.
\end{proposition}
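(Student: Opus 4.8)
The plan is to work with the aggregate function $F(x):=\sum_{i\in\mathcal{V}} f_i(x)$ and to verify the defining inequality of restricted strong convexity directly. Since each $f_i$ is $M_i$-smooth by Assumption~\ref{asm:problem}, $F$ is convex and continuously differentiable on all of $\mathbb{R}^d$, so $\partial F(x)=\{\nabla F(x)\}$ everywhere and the subgradient quantifiers in the definition collapse to gradients. Thus it suffices to show that for every convex compact $C\ni x^\star$ there is $m_C>0$ with $\langle \nabla F(x)-\nabla F(x^\star),\,x-x^\star\rangle\ge m_C\|x-x^\star\|^2$ for all $x\in C$. The idea is to exploit the positive definite Hessian at $x^\star$ to obtain a quadratic lower bound on a small ball, and then to propagate that bound to all of $C$ using global convexity together with the boundedness of $C$.

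First I would localize. Because $F$ is twice continuously differentiable on $B(x^\star,\epsilon)$ and $\sum_{i\in\mathcal{V}}\nabla^2 f_i(x^\star)=\nabla^2 F(x^\star)\succ\mathbf{O}$, continuity of $\nabla^2 F$ yields some $\delta\in(0,\epsilon]$ and a constant $\mu:=\tfrac12\lambda_{\min}(\nabla^2 F(x^\star))>0$ such that $\nabla^2 F(x)\succeq \mu I_d$ for all $x\in B(x^\star,\delta)$. Since $B(x^\star,\delta)$ is convex, for any $x\in B(x^\star,\delta)$ the segment $x^\star+s(x-x^\star)$, $s\in[0,1]$, stays in the ball, and the integral representation $\nabla F(x)-\nabla F(x^\star)=\bigl(\int_0^1\nabla^2 F(x^\star+s(x-x^\star))\,ds\bigr)(x-x^\star)$ gives $\langle \nabla F(x)-\nabla F(x^\star),x-x^\star\rangle\ge \mu\|x-x^\star\|^2$ throughout $B(x^\star,\delta)$.

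The \textbf{main obstacle} is handling $x\in C$ with $\|x-x^\star\|>\delta$, where only the value of $\nabla^2 F$ away from $x^\star$ (possibly merely positive semidefinite) is available, so the quadratic bound cannot be read off directly. Here I would use a ray-monotonicity argument. For such $x$, set $y:=x^\star+\tfrac{\delta}{\|x-x^\star\|}(x-x^\star)$, so $y\in B(x^\star,\delta)$ with $\|y-x^\star\|=\delta$; applying the ball bound at $y$ and using $y-x^\star=\tfrac{\delta}{\|x-x^\star\|}(x-x^\star)$ gives $\langle \nabla F(y)-\nabla F(x^\star),x-x^\star\rangle\ge \mu\delta\|x-x^\star\|$. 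Since $F$ is convex and differentiable, the scalar map $t\mapsto\langle \nabla F(x^\star+t(x-x^\star))-\nabla F(x^\star),x-x^\star\rangle$ is nondecreasing in $t$ and vanishes at $t=0$; comparing its value at the parameter of $y$ with its value at $t=1$ then yields $\langle \nabla F(x)-\nabla F(x^\star),x-x^\star\rangle\ge \mu\delta\|x-x^\star\|$. Finally, compactness of $C$ provides $R_C:=\max_{x\in C}\|x-x^\star\|<\infty$, so $\|x-x^\star\|\ge \|x-x^\star\|^2/R_C$ converts this linear bound into a quadratic one. Combining the two regimes, the inequality holds on all of $C$ with $m_C:=\min\{\mu,\ \mu\delta/R_C\}>0$, which establishes Assumption~\ref{asm:stronglyconvex}.
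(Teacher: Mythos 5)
Your proof is correct and follows essentially the same route as the paper's: establish a quadratic lower bound on a small ball around $x^\star$ via continuity of the Hessian, extend it to the rest of $C$ by projecting onto the sphere and using monotonicity of the gradient along the ray (your "ray-monotonicity" step is exactly the paper's splitting $\langle\nabla F(x)-\nabla F(z),x-x^\star\rangle\ge 0$ plus the bound at $z$), and convert the resulting linear bound to a quadratic one using the diameter of $C$. No gaps.
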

\begin{proof}
	See Appendix \ref{ssec:proofofpropstrongtildef}.
\end{proof}

Proposition~\ref{prop:strongconvex} provides a sufficient condition for Assumption~\ref{asm:stronglyconvex}. When each $f_i$ is twice continuously differentiable, this sufficient condition is much weaker than global strong convexity which requires $\sum_{i\in\mathcal{V}}\nabla^2 f_i(x)\succ\mathbf{O}$ $\forall x\in\mathbb{R}^d$. 

Note that Assumption~\ref{asm:stronglyconvex} is unable to assure any strong convexity of $f(\mathbf{x})=\sum_{i\in\mathcal{V}}f_i(x_i)$, yet it guarantees the following property on $f(\mathbf{x})+\frac{\rho}{4}\|\mathbf{x}\|_{\tilde{H}}^2$.

\begin{proposition}\label{prop:strongconvexity}\cite[Lemma 1]{WuX20}
Under Assumption~\ref{asm:tildeHSmallerthanH} and Assumption~\ref{asm:stronglyconvex}, $f(\mathbf{x})+\frac{\rho}{4}\|\mathbf{x}\|_{\tilde{H}}^2$ is locally restricted strongly convex with respect to $\mathbf{x}^\star$.
\end{proposition}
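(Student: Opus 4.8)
The plan is to prove the equivalent statement about the gradient map of $F(\mathbf{x}):=f(\mathbf{x})+\frac{\rho}{4}\|\mathbf{x}\|_{\tilde{H}}^2$. Since each $f_i$ is differentiable and the quadratic term is smooth, $F$ is differentiable with $\nabla F(\mathbf{x})=\nabla f(\mathbf{x})+\frac{\rho}{2}\tilde{H}\mathbf{x}$, so local restricted strong convexity with respect to $\mathbf{x}^\star$ amounts to showing that for every convex compact $\mathbf{C}\ni\mathbf{x}^\star$ there is $m_{\mathbf{C}}>0$ with
\[
T(\mathbf{x}):=\textstyle{\sum_{i\in\mathcal{V}}}\langle\nabla f_i(x_i)-\nabla f_i(x^\star),x_i-x^\star\rangle+\tfrac{\rho}{2}\|\mathbf{x}-\mathbf{x}^\star\|_{\tilde{H}}^2\ge m_{\mathbf{C}}\|\mathbf{x}-\mathbf{x}^\star\|^2
\]
for all $\mathbf{x}\in\mathbf{C}$. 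Both summands of $T$ are nonnegative: the first by convexity of the $f_i$, the second because $\tilde{H}\succeq\mathbf{O}$. Because the $f_i$ are only assumed $M_i$-smooth (no global twice differentiability), I would avoid a Hessian/second-order expansion and argue by contradiction using compactness.

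Suppose the bound fails for some $\mathbf{C}$. Then there exist $\mathbf{x}^n\in\mathbf{C}\setminus\{\mathbf{x}^\star\}$ with $T(\mathbf{x}^n)<\frac{1}{n}\|\mathbf{x}^n-\mathbf{x}^\star\|^2$. Set $t_n:=\|\mathbf{x}^n-\mathbf{x}^\star\|$ and $\mathbf{d}^n:=(\mathbf{x}^n-\mathbf{x}^\star)/t_n$; passing to a subsequence, $\mathbf{x}^n\to\mathbf{x}^\infty\in\mathbf{C}$ and $\mathbf{d}^n\to\mathbf{d}$ with $\|\mathbf{d}\|=1$. I would split according to whether $\mathbf{x}^\infty=\mathbf{x}^\star$. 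In the case $\mathbf{x}^\infty\neq\mathbf{x}^\star$, continuity of $T$ gives $T(\mathbf{x}^\infty)\le 0$, hence $T(\mathbf{x}^\infty)=0$ and both nonnegative summands vanish. Vanishing of the $\tilde{H}$-term together with $\operatorname{Null}(\tilde{H})=S$ forces $\mathbf{x}^\infty\in S$, i.e. all blocks equal some $\xi$; vanishing of the first summand then reads $\langle\nabla g(\xi)-\nabla g(x^\star),\xi-x^\star\rangle=0$ with $g:=\sum_{i\in\mathcal{V}}f_i$, and restricted strong convexity of $g$ on a compact set containing $\xi$ forces $\xi=x^\star$, i.e. $\mathbf{x}^\infty=\mathbf{x}^\star$, a contradiction.

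The delicate case, which I expect to be the main obstacle, is $\mathbf{x}^\infty=\mathbf{x}^\star$, i.e. $t_n\to 0$. Here I would first use the $\tilde{H}$-term to pin down the limiting direction: since $T(\mathbf{x}^n)\ge\frac{\rho}{2}\|\mathbf{x}^n-\mathbf{x}^\star\|_{\tilde{H}}^2\ge\frac{\rho\lambda_{\tilde{H}}}{2}\|P_{S^\perp}(\mathbf{x}^n-\mathbf{x}^\star)\|^2$, with $\lambda_{\tilde{H}}>0$ the smallest nonzero eigenvalue of $\tilde{H}$, dividing by $t_n^2$ and letting $n\to\infty$ yields $\|P_{S^\perp}\mathbf{d}\|=0$, so $\mathbf{d}=\mathbf{1}\otimes e$ for a common $e$ with $\|e\|=1/\sqrt{N}$. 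I would then show $T(\mathbf{x}^n)/t_n^2$ is bounded away from $0$, contradicting $T(\mathbf{x}^n)/t_n^2\to 0$. Writing $x_i^n-x^\star=t_n d_i^n$ with $d_i^n\to e$, the first summand of $T(\mathbf{x}^n)/t_n^2$ equals $\sum_{i\in\mathcal{V}}\frac{1}{t_n}\langle\nabla f_i(x^\star+t_n d_i^n)-\nabla f_i(x^\star),d_i^n\rangle$; invoking $M_i$-Lipschitz continuity of $\nabla f_i$ to replace each $d_i^n$ by the common limit $e$ costs an error of order $M_i\|d_i^n-e\|(\|d_i^n\|+\|e\|)\to 0$, so this quantity shares the limit of $\frac{1}{t_n}\langle\nabla g(x^\star+t_n e)-\nabla g(x^\star),e\rangle$, which by restricted strong convexity of $g$ at $x^\star$ is at least $m_{C}\|e\|^2>0$. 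Since the first summand is $\le T(\mathbf{x}^n)$, this contradicts $T(\mathbf{x}^n)/t_n^2\to 0$ and completes the argument.

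The remaining point to justify is that the restricted-strong-convexity parameter of $g$ can be taken uniform over the bounded points $\xi$ and $x^\star+t_n e$ occurring above; this follows by applying Assumption~\ref{asm:stronglyconvex} once to a fixed closed ball $B(x^\star,R)$ large enough to contain the blocks of every $\mathbf{x}\in\mathbf{C}$. I deliberately route through compactness rather than a direct estimate: decomposing $\mathbf{x}-\mathbf{x}^\star$ into its $S$- and $S^\perp$-parts does lower-bound the $\tilde{H}$-term and, via the restricted strong convexity of $g$ at the average, the $S$-part, but the cross terms between the two parts are only controllable after absorbing them by Young's inequality and do not produce a clean closed-form $m_{\mathbf{C}}$ for small $\rho$, whereas the contradiction argument needs only the existence of a positive parameter, which is exactly what the definition requires.
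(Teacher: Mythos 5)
Your argument is correct. Note first that the paper does not prove this proposition itself---it imports it as \cite[Lemma 1]{WuX20}---so there is no in-text proof to match against; judged on its own, your reduction to the gradient inequality $T(\mathbf{x})\ge m_{\mathbf{C}}\|\mathbf{x}-\mathbf{x}^\star\|^2$ is the right equivalent statement (since $F$ is differentiable, the subdifferential is a singleton), and both cases of the compactness argument close. In the case $\mathbf{x}^\infty\ne\mathbf{x}^\star$ the identification $\operatorname{Null}(\tilde{H})=S$ and restricted strong convexity of $g=\sum_i f_i$ on a fixed large ball do force $\mathbf{x}^\infty=\mathbf{x}^\star$; in the case $t_n\to0$ the eigenvalue bound $\|\cdot\|_{\tilde{H}}^2\ge\lambda_{\tilde{H}}\|P_{S^\perp}(\cdot)\|^2$ pins the limiting direction into $S$, and the Lipschitz swap of $d_i^n$ for $e$ indeed costs only $O(\|d_i^n-e\|)$. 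One cosmetic caveat: the limit of $\frac{1}{t_n}\langle\nabla g(x^\star+t_ne)-\nabla g(x^\star),e\rangle$ need not exist when $g$ is only $C^1$, so the last step should be phrased as a $\liminf$ bound ($\liminf_n A_n\ge m/N>0$ versus $\limsup_n A_n\le 0$), which is what your inequalities actually deliver. Compared with the direct route that the cited lemma presumably follows---splitting $\mathbf{x}-\mathbf{x}^\star$ into its $S$ and $S^\perp$ components, lower-bounding the consensus part via restricted strong convexity of $g$ at the block average and absorbing the cross terms by Young's inequality---your contradiction argument is non-constructive (it yields no explicit $m_{\mathbf{C}}$ and hence could not be used to quantify how $m_\rho$ scales with $\rho$ or $\lambda_{\tilde{H}}$ in Theorems~\ref{theo:linearrate} and~\ref{thm:sublinearstronglyconvex}), but it avoids all cross-term bookkeeping and works uniformly for any $\rho>0$; since the definition only demands existence of a positive parameter, that suffices for the proposition as stated.
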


Subsequently, we construct a convex and compact set $\mathcal{C}\subset \mathbb{R}^{Nd}$ containing $\mathbf{x}^\star$, so that the restricted strong convexity in Proposition~\ref{prop:strongconvexity} takes into effect on $\mathcal{C}$, leading to a parameter condition that guarantees $\mathbf{x}^k\in \mathcal{C}$ $\forall k\ge 0$. To introduce $\mathcal{C}$, note from Assumption~\ref{asm:ukassumption} that there exist symmetric positive semidefinite matrices $A_\ell$, $A_u\in \mathbb{R}^{Nd\times Nd}$ such that for any $\mathbf{x}\in \mathbb{R}^{Nd}$ and $k\ge 0$, $A_\ell \preceq \nabla^2 u^k(\mathbf{x}) \preceq A_u$. Let $\Delta:=\|A_u-A_\ell\|\ge0$. Moreover, define $A_a=(A_\ell+A_u)/2$ and $\tilde{G}=\operatorname{diag}(A_a,I_{Nd}/\rho)$, which are also symmetric positive semidefinite. Then, let $\mathcal{C}:=\left\{\mathbf{x}|~\|\mathbf{x}-\mathbf{x}^\star\|_{A_a+\rho\tilde{H}}^2\le\|\mathbf{z}^0-\mathbf{z}^\star\|_{\tilde{G}}^2+\rho\|\mathbf{x}^0\|_{\tilde{H}}^2\right\}$, where $\mathbf{z}^0$ and $\mathbf{z}^\star$ are defined above Lemma~\ref{lemma:upperboundconstantweight}. From Assumption~\ref{asm:ukassumption}(b) and Assumption~\ref{asm:tildeHSmallerthanH}, $A_a+\rho\tilde{H}\succ\mathbf{O}$, so that $\mathcal{C}$ is convex and compact. Thus, from Proposition~\ref{prop:strongconvexity}, $f(\mathbf{x})+\frac{\rho}{4}\|\mathbf{x}\|_{\tilde{H}}^2$ is restricted strongly convex with respect to $\mathbf{x}^\star$ on $\mathcal{C}$ with convexity parameter $m_\rho\in(0,\infty)$. Lemma~\ref{lemma:levelset} is another consequence of Lemma~\ref{lemma:vrelationforboundedz}, showing that $\mathbf{x}^k$ stays identically in $\mathcal{C}$.

\begin{lemma}\label{lemma:levelset}
Under Assumption~\ref{asm:problem}, Assumption~\ref{asm:ukassumption}, Assumption~\ref{asm:tildeHSmallerthanH}, and Assumption~\ref{asm:stronglyconvex}, $\mathbf{x}^k\in \mathcal{C}$ $\forall k\ge 0$ provided that
\begin{align}
A_a\!\succ\!\!\left(\!\frac{\Delta^2}{8\eta m_\rho}\!+\!\Delta\right)I_{Nd}\!+\!\frac{\Lambda_M}{2(1\!-\!\eta)}\text{ for some $\eta\in (0,1)$},\label{eq:restrictedstrongstepsize}
\end{align}
where $\Lambda_M$ is defined above Lemma~\ref{lemma:vrelationforboundedz}.
\end{lemma}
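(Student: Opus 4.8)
The plan is to prove the claim by induction on $k$, using a sharpened invariant that already encodes membership in $\mathcal{C}$. Set $R^2 := \|\mathbf{z}^0-\mathbf{z}^\star\|_{\tilde G}^2 + \rho\|\mathbf{x}^0\|_{\tilde H}^2$ and, for every $k\ge 0$, introduce the energy $\Theta^k := \|\mathbf{x}^k-\mathbf{x}^\star\|_{A_a+\rho\tilde H}^2 + \frac1\rho\|\mathbf{v}^k-\mathbf{v}^\star\|^2$. Since $\tilde H\mathbf{x}^\star=\mathbf{0}$ gives $\|\mathbf{x}^k\|_{\tilde H}^2=\|\mathbf{x}^k-\mathbf{x}^\star\|_{\tilde H}^2$, and, for $k\ge1$, the dual recursion $\mathbf{v}^k-\mathbf{v}^{k-1}=\rho\tilde H^{\frac12}\mathbf{x}^k$ yields $\rho\|\mathbf{x}^k\|_{\tilde H}^2=\frac1\rho\|\mathbf{v}^k-\mathbf{v}^{k-1}\|^2$, one checks that $\Theta^k=\|\mathbf{z}^k-\mathbf{z}^\star\|_{\tilde G}^2+\rho\|\mathbf{x}^k\|_{\tilde H}^2$ and that $\Theta^0=R^2$. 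Because $\|\mathbf{x}^k-\mathbf{x}^\star\|_{A_a+\rho\tilde H}^2\le\Theta^k$, the invariant $\Theta^k\le R^2$ already forces $\mathbf{x}^k\in\mathcal{C}$, so it suffices to prove $\Theta^k\le R^2$ for all $k$. The base case $\Theta^0=R^2$ is immediate.

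The heart of the argument is the one-step descent $\Theta^{k+1}\le\Theta^k$, which I would establish under the inductive hypothesis $\mathbf{x}^k\in\mathcal{C}$. Expanding both the primal part $\|\mathbf{x}^{k+1}-\mathbf{x}^\star\|_{A_a}^2-\|\mathbf{x}^k-\mathbf{x}^\star\|_{A_a}^2$ and the dual part $\frac1\rho(\|\mathbf{v}^{k+1}-\mathbf{v}^\star\|^2-\|\mathbf{v}^k-\mathbf{v}^\star\|^2)$ with the identity $\|a\|_M^2-\|b\|_M^2=2\langle a-b,Ma\rangle-\|a-b\|_M^2$, and then bounding the dual cross term $\frac2\rho\langle\mathbf{v}^{k+1}-\mathbf{v}^k,\mathbf{v}^{k+1}-\mathbf{v}^\star\rangle$ by twice the inequality of Lemma~\ref{lemma:vrelationforboundedz}, the two cross terms combine into $2\langle\mathbf{x}^{k+1}-\mathbf{x}^\star,(A_a-A^k)(\mathbf{x}^{k+1}-\mathbf{x}^k)\rangle$. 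I would split $\mathbf{x}^{k+1}-\mathbf{x}^\star=(\mathbf{x}^k-\mathbf{x}^\star)+(\mathbf{x}^{k+1}-\mathbf{x}^k)$ and use $\|A_a-A^k\|\le\Delta/2$ (which follows from $A_\ell\preceq A^k\preceq A_u$ and $A_a=(A_\ell+A_u)/2$), controlling the part carrying $\mathbf{x}^k-\mathbf{x}^\star$ via the Young inequality $\Delta\|\mathbf{x}^k-\mathbf{x}^\star\|\,\|\mathbf{x}^{k+1}-\mathbf{x}^k\|\le 2\eta m_\rho\|\mathbf{x}^k-\mathbf{x}^\star\|^2+\frac{\Delta^2}{8\eta m_\rho}\|\mathbf{x}^{k+1}-\mathbf{x}^k\|^2$, with the leading constant chosen to match the strong-convexity term below.

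Next I would invoke the restricted strong convexity of $f(\mathbf{x})+\frac\rho4\|\mathbf{x}\|_{\tilde H}^2$ on $\mathcal{C}$ from Proposition~\ref{prop:strongconvexity}; evaluated at $\mathbf{x}^k\in\mathcal{C}$ and using $\tilde H\mathbf{x}^\star=\mathbf{0}$, it gives $\langle\mathbf{x}^k-\mathbf{x}^\star,\nabla f(\mathbf{x}^k)-\nabla f(\mathbf{x}^\star)\rangle\ge m_\rho\|\mathbf{x}^k-\mathbf{x}^\star\|^2-\frac\rho2\|\mathbf{x}^k\|_{\tilde H}^2$, so the $\eta$-weighted inner-product term produced by Lemma~\ref{lemma:vrelationforboundedz} contributes $-2\eta m_\rho\|\mathbf{x}^k-\mathbf{x}^\star\|^2+\eta\rho\|\mathbf{x}^k\|_{\tilde H}^2$. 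The two $\|\mathbf{x}^k-\mathbf{x}^\star\|^2$ contributions cancel exactly, $\eta\rho\|\mathbf{x}^k\|_{\tilde H}^2\le\rho\|\mathbf{x}^k\|_{\tilde H}^2$, and after moving $\frac1\rho\|\mathbf{v}^{k+1}-\mathbf{v}^k\|^2=\rho\|\mathbf{x}^{k+1}\|_{\tilde H}^2$ to the left the residual is $-\langle\mathbf{x}^{k+1}-\mathbf{x}^k,(A_a-(\frac{\Delta^2}{8\eta m_\rho}+\Delta)I_{Nd}-\frac{\Lambda_M}{2(1-\eta)})(\mathbf{x}^{k+1}-\mathbf{x}^k)\rangle$, which is nonpositive precisely by the step-size condition \eqref{eq:restrictedstrongstepsize}. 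This yields $\Theta^{k+1}\le\Theta^k\le R^2$, closing the induction, whence $\mathbf{x}^k\in\mathcal{C}$ for all $k\ge0$.

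I expect the main obstacle to be the bookkeeping that makes the cancellation exact: the strong-convexity estimate is only available at $\mathbf{x}^k$ (which forces the induction to be phrased so that $\mathbf{x}^k\in\mathcal{C}$ is known before $\mathbf{x}^{k+1}\in\mathcal{C}$ is deduced), whereas the combined cross term is anchored at $\mathbf{x}^{k+1}$; reconciling the two requires the split of $\mathbf{x}^{k+1}-\mathbf{x}^\star$ together with a Young constant tuned to annihilate the $m_\rho\|\mathbf{x}^k-\mathbf{x}^\star\|^2$ contribution, leaving only the $\|\mathbf{x}^{k+1}-\mathbf{x}^k\|$-quadratic that \eqref{eq:restrictedstrongstepsize} is designed to dominate. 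A secondary point of care is the coupling of the dual increment to the penalty through $\mathbf{v}^k-\mathbf{v}^{k-1}=\rho\tilde H^{\frac12}\mathbf{x}^k$, which is what allows the descent to fold into the tight invariant $\Theta^k\le R^2$ rather than a looser bound.
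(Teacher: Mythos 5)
Your proposal is correct and follows essentially the same route as the paper's proof: the energy $\Theta^k$ is exactly the paper's $c^k=\|\mathbf{z}^k-\mathbf{z}^\star\|_{\tilde{G}}^2+\rho\|\mathbf{x}^k\|_{\tilde{H}}^2$, and the induction, the invocation of Lemma~\ref{lemma:vrelationforboundedz}, the split of $\mathbf{x}^{k+1}-\mathbf{x}^\star$ with $\|A^k-A_a\|\le\Delta/2$, and the use of Proposition~\ref{prop:strongconvexity} at $\mathbf{x}^k\in\mathcal{C}$ all match. The only cosmetic difference is that you tune the Young constant to $4\eta m_\rho$ so the $\|\mathbf{x}^k-\mathbf{x}^\star\|^2$ terms cancel exactly, whereas the paper routes the same estimate through the auxiliary parameters $\beta,\sigma$ of \eqref{eq:sigmabetarange} so as to retain strictly positive margins in \eqref{eq:strongzkbound} for reuse in Theorems~\ref{theo:linearrate} and~\ref{thm:sublinearstronglyconvex}.
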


\begin{proof}
	See Appendix \ref{ssec:proofoflevelset}.
\end{proof}

When $\Delta=0$ (which means $\nabla^2u^k$ is constant) or $\sum_{i\in \mathcal{V}} f_i(x)$ is globally restricted strongly convex (which means $m_\rho$ can be independent of $\mathcal{C}$), it is straightforward to find $u^k$ so that \eqref{eq:restrictedstrongstepsize} holds. Otherwise, $m_\rho$ depends on $\mathcal{C}$ and thus on $A_a$, so that both sides of \eqref{eq:restrictedstrongstepsize} involve $A_a$. With that being said, \eqref{eq:restrictedstrongstepsize} can always be satisfied by proper $u^k$'s. To see this, arbitrarily pick $\eta\in(0,1)$ and $\bar{\lambda}_a,\underline{\lambda}_a>0$ such that $\bar{\lambda}_a>\underline{\lambda}_a>\frac{M}{2(1-\eta)}$, where $M:=\max_{i\in \mathcal{V}} M_i$. If we choose $u^k$ such that the corresponding $A_a$ satisfies $\underline{\lambda}_aI_{Nd}\preceq A_a\preceq \bar{\lambda}_aI_{Nd}$, then $\mathcal{C}$ is a subset of $\mathcal{C}':=\{\mathbf{x}|~\underline{\lambda}_a\|\mathbf{x}-\mathbf{x}^\star\|^2\le\bar{\lambda}_a\|\mathbf{x}^0-\mathbf{x}^\star\|^2+\frac{1}{\rho}\|\mathbf{v}^0-\mathbf{v}^\star\|^2+\rho\|\mathbf{x}^0\|_{\tilde{H}}^2\}$. Let $m_\rho'$ be any convexity parameter of $f(\mathbf{x})+\frac{\rho\|\mathbf{x}\|_{\tilde{H}}^2}{4}$ on $\mathcal{C}'$. Clearly, $m_\rho'\in (0,m_\rho]$ and is independent of $A_a$. Then, the following suffices to satisfy \eqref{eq:restrictedstrongstepsize}:
\begin{equation}\label{eq:strongercondition}
\left(\frac{\Delta^2}{8\eta m_\rho'}+\Delta+\underline{\lambda}_a\right)I_{Nd}\preceq A_a\preceq  \bar{\lambda}_aI_{Nd}.
\end{equation}
The lower and upper bounds on $A_a$ in \eqref{eq:strongercondition} do not need to depend on $A_a$. Also, \eqref{eq:strongercondition} is well-posed, since $\frac{\Delta^2}{8\eta m_\rho'}+\Delta+\underline{\lambda}_a\le\bar{\lambda}_a$ holds for sufficiently small $\Delta>0$. Therefore, $u^k$ $\forall k\ge0$ with such $\Delta$ and with $A_a$ satisfying \eqref{eq:strongercondition} meet \eqref{eq:restrictedstrongstepsize}.
 
Next, we present the convergence rates of AMM in both optimality and feasibility under the local restricted strong convexity condition. We first consider the smooth case of \eqref{eq:prob} with each $h_i$ identically equal to $0$, and provide a linear convergence rate for $\|\mathbf{z}^k-\mathbf{z}^\star\|_{\tilde{G}}^2+\rho\|\mathbf{x}^k\|_{\tilde{H}}^2$, which quantifies the distance to primal optimality, dual optimality, and primal feasibility of \eqref{eq:eqprob}. In Theorem~\ref{theo:linearrate}, we force $\mathbf{v^\star}$ to satisfy not only \eqref{eq:zgs_opt} but $\mathbf{v}^\star-\mathbf{v}^0\in S^\perp$ as well. Such $\mathbf{v}^\star$ is a particular optimum to the Lagrange dual of \eqref{eq:eqprob}, and can be chosen as $\mathbf{v}^\star=\tilde{\mathbf{v}}+((\frac{1}{N}\sum_{i\in \mathcal{V}} (v_i^0-\tilde{v}_i))^T, \ldots, (\frac{1}{N}\sum_{i\in \mathcal{V}} (v_i^0-\tilde{v}_i))^T)^T$, where $\tilde{\mathbf{v}}$ is any Lagrange dual optimum of \eqref{eq:eqprob} and $\mathbf{v}^0=((v_1^0)^T,\ldots,(v_N^0)^T)^T$ is the initial dual iterate of AMM. 

\begin{theorem}\label{theo:linearrate}
Suppose all the conditions in Lemma~\ref{lemma:levelset} hold. Also suppose $h_i(x)\equiv 0$ $\forall i\in \mathcal{V}$. Then, there exists $\tilde{\delta}\in (0,1)$ such that for each $k\ge 0$,
\begin{align}
&\|\mathbf{z}^{k+1}-\mathbf{z}^\star\|_{\tilde{G}}^2+\rho\|\mathbf{x}^{k+1}\|_{\tilde{H}}^2\nonumber\displaybreak[0]\\
&\le (1-\tilde{\delta})(\|\mathbf{z}^k-\mathbf{z}^\star\|_{\tilde{G}}^2+\rho\|\mathbf{x}^k\|_{\tilde{H}}^2).\label{eq:theononcompositeresult}
\end{align}
Specifically, $\tilde{\delta}=\max\{\delta|~B_i(\delta)\succeq \mathbf{O}, \forall i\in \{1,2,3\}\}$ with
\begin{align*}
B_1(\delta)\!=&(2\eta m_\rho\!\!-\!\beta\Delta)I_{Nd}\!-\!\delta A_a\!\!-\!\delta(1\!+\!\theta_1)(1\!+\!\theta_2)\Lambda_M^2/\!(\rho\lambda_{\tilde{H}}\!),\displaybreak[0]\\
B_2(\delta)\!=&\rho(1-\delta-\eta)\tilde{H}-\rho\delta(1+1/\theta_1)H\tilde{H}^\dag H,\displaybreak[0]\\
B_3(\delta)\!=& (1\!-\!\sigma)\!A_a\!\!-\!2\delta(1\!+\!\theta_1\!)(1\!+\!\!1/\theta_2\!)(\tfrac{A_u^2}{\rho\lambda_{\tilde{H}}}\!+\!\rho\bar{\lambda} H^{\frac{1}{2}}\tilde{H}^\dag H^{\frac{1}{2}}),
\end{align*}
where $\theta_1,\theta_2$ are arbitrary positive scalars, $\bar{\lambda}$ is any upper bound on $\|H\|$, $\eta\in (0,1)$ is given in \eqref{eq:restrictedstrongstepsize}, $\beta>0$ and $\sigma\in (0,1)$ are such that
\begin{equation}\label{eq:sigmabetarange}
\beta\Delta<2\eta m_\rho,\quad \sigma A_a\succeq (\frac{1}{4\beta}+1)\Delta I_{Nd}+\frac{\Lambda_M}{2(1-\eta)},
\end{equation}
and $\lambda_{\tilde{H}}>0$ is the smallest nonzero eigenvalue of $\tilde{H}$.
\end{theorem}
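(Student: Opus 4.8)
The plan is to establish a one-step contraction for the Lyapunov function $V^k := \|\mathbf{z}^k-\mathbf{z}^\star\|_{\tilde{G}}^2 + \rho\|\mathbf{x}^k\|_{\tilde{H}}^2$, which combines the $G$-weighted distance of the primal-dual iterate to $(\mathbf{x}^\star,\mathbf{v}^\star)$ with a feasibility term measured in the $\tilde{H}$-seminorm. The natural starting point is Lemma~\ref{lemma:vrelationforboundedz}, the master inequality relating $\langle \mathbf{v}^k-\mathbf{v}^{k+1}, \mathbf{v}^{k+1}-\mathbf{v}^\star\rangle$ to the local strong convexity gain and the successive-difference penalty. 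First I would specialize that lemma with $h\equiv 0$, so that the subgradient terms drop out and $\nabla u^k(\mathbf{x}^{k+1})+\rho H\mathbf{x}^{k+1}+\mathbf{v}^k=\mathbf{0}$ becomes an equality; combined with the mean-value representation \eqref{eq:meanvaluetheo} this lets me express $\mathbf{x}^{k+1}-\mathbf{x}^k$ and $\mathbf{v}^{k+1}-\mathbf{v}^k=\rho\tilde{H}^{1/2}\mathbf{x}^{k+1}$ in terms of the optimality gap. The restricted strong convexity of $f(\mathbf{x})+\frac{\rho}{4}\|\mathbf{x}\|_{\tilde{H}}^2$ on $\mathcal{C}$ (Proposition~\ref{prop:strongconvexity}) supplies the $2\eta m_\rho$ term, and Lemma~\ref{lemma:levelset} guarantees $\mathbf{x}^k,\mathbf{x}^{k+1}\in\mathcal{C}$ so that this convexity parameter is actually in force at every iteration.

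The core of the argument is an algebraic identity of the form $V^{k+1} \le V^k - (\text{nonnegative quadratic in } \mathbf{x}^{k+1}-\mathbf{x}^\star,\ \mathbf{x}^{k+1}-\mathbf{x}^k,\ \text{and } \tilde{H}^{1/2}\mathbf{x}^{k+1})$, obtained by expanding $\|\mathbf{z}^{k+1}-\mathbf{z}^\star\|_{\tilde{G}}^2 = \|\mathbf{x}^{k+1}-\mathbf{x}^\star\|_{A_a}^2 + \frac{1}{\rho}\|\mathbf{v}^{k+1}-\mathbf{v}^\star\|^2$ and the feasibility term, then substituting the dual update. To convert this descent inequality into a genuine linear rate I would try to dominate $\delta V^k$ by the negative quadratic for some $\delta>0$; this is exactly where the three matrix conditions $B_1(\delta),B_2(\delta),B_3(\delta)\succeq\mathbf{O}$ enter. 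Each $B_i(\delta)$ isolates one block: $B_1$ governs the primal-distance/strong-convexity balance (the $2\eta m_\rho-\beta\Delta$ term), $B_2$ controls the dual/feasibility coupling through $H\tilde{H}^\dagger H$, and $B_3$ absorbs the successive-difference cross terms via the bounds $A_\ell\preceq A^k\preceq A_u$. The splitting constants $\theta_1,\theta_2$ arise from Young's inequality applied to the cross terms $\langle \mathbf{x}^{k+1}-\mathbf{x}^k,\cdot\rangle$ and $\langle \tilde{H}^{1/2}\mathbf{x}^{k+1},\cdot\rangle$, and $\beta,\sigma$ from \eqref{eq:sigmabetarange} are the same parameters used in the level-set argument of Lemma~\ref{lemma:levelset}, which ensures consistency between the invariance and the contraction.

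The main obstacle will be handling the dual error $\mathbf{v}^{k+1}-\mathbf{v}^\star$, since the dual variable only contracts along $\operatorname{Range}(\tilde{H})$ and $\tilde{H}$ is merely positive semidefinite with nontrivial null space $S$. The restriction $\mathbf{v}^\star-\mathbf{v}^0\in S^\perp$, imposed just before the theorem, is precisely what confines $\mathbf{v}^k-\mathbf{v}^\star$ to $\operatorname{Range}(\tilde{H})=S^\perp$ for all $k$ (because every increment $\rho\tilde{H}^{1/2}\mathbf{x}^{k+1}$ lies in that range), so that the smallest nonzero eigenvalue $\lambda_{\tilde{H}}$ can be used to lower-bound $\|\mathbf{v}^{k+1}-\mathbf{v}^\star\|^2$ by the feasibility quantity; the pseudoinverse $\tilde{H}^\dagger$ and the factor $1/(\rho\lambda_{\tilde{H}})$ appearing throughout $B_1$ and $B_3$ are the traces of this step. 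The remaining work is to verify that $\tilde{\delta}=\max\{\delta: B_i(\delta)\succeq\mathbf{O},\ i\in\{1,2,3\}\}$ is strictly positive: at $\delta=0$ one checks $B_1(0)=(2\eta m_\rho-\beta\Delta)I_{Nd}\succ\mathbf{O}$ by \eqref{eq:sigmabetarange}, $B_2(0)=\rho(1-\eta)\tilde{H}\succeq\mathbf{O}$, and $B_3(0)=(1-\sigma)A_a\succ\mathbf{O}$ by Assumption~\ref{asm:ukassumption}(b) and $\sigma\in(0,1)$; continuity of the minimum eigenvalue in $\delta$ then yields a positive $\tilde{\delta}<1$, completing the contraction \eqref{eq:theononcompositeresult}.
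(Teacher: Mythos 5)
Your plan follows the paper's proof essentially step for step: the range restriction $\mathbf{v}^k-\mathbf{v}^\star\in S^\perp$, the use of the $h\equiv 0$ optimality condition and $(\tilde{H}^{\frac{1}{2}})^\dag$ to express the dual error in terms of primal residuals, Young's inequality with $\theta_1,\theta_2$, the descent inequality \eqref{eq:strongzkbound} inherited from the proof of Lemma~\ref{lemma:levelset}, and the positivity check at $\delta=0$. Two small cautions: what is actually needed is an \emph{upper} bound on $\|\mathbf{v}^k-\mathbf{v}^\star\|^2$ via $\tilde{H}^\dag\preceq I_{Nd}/\lambda_{\tilde{H}}$ (not a lower bound by the feasibility term), and since $B_2(0)=\rho(1-\eta)\tilde{H}$ is only positive semidefinite, the continuity argument for $\tilde{\delta}>0$ must be carried out on $S^\perp$, which works because $\operatorname{Range}(H\tilde{H}^\dag H)\subseteq S^\perp$.
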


\begin{proof}
See Appendix \ref{ssec:proofoflinearrate}.
\end{proof}

In comparison with many existing works such as \cite{Shi14,Mokhtari16,Mokhtari16a,Makhdoumi17,Nedic17,QuG18,LiuYH19,LiZ19,PuS20,Xin20,Xu20,alghunaim20} that assume \emph{global} strong convexity to derive linear convergence rates, the linear rate in Theorem~\ref{theo:linearrate} only requires the weaker condition of \emph{local restricted} strong convexity in Assumption~\ref{asm:stronglyconvex}. Moreover, Theorem~\ref{theo:linearrate} provides new convergence results to a number of existing algorithms that can be specialized from AMM. Specifically, Theorem~\ref{theo:linearrate} establishes linear convergence for D-FBBS \cite{Xu18}, ID-FBBS \cite{Xu18}, and DPGA \cite{Aybat18}, which has never been discussed before. In addition, Theorem~\ref{theo:linearrate} allows EXTRA \cite{ShiW15}, DIGing \cite{Nedic17}, ESOM \cite{Mokhtari16}, DQM \cite{Mokhtari16a}, the distributed ADMMs \cite{Makhdoumi17,Shi14}, PUDA \cite{alghunaim20}, and ABC \cite{Xu20} to achieve linear convergence under less restrictive problem assumptions, including relaxing the global strong convexity in \cite{Nedic17,Mokhtari16,Mokhtari16a,Makhdoumi17,Shi14,Xu20,alghunaim20} and the global restricted strong convexity in \cite{ShiW15} to local restricted strong convexity, and eliminating the Lipschitz continuity of $\nabla^2 f_i$ required in \cite{Mokhtari16,Mokhtari16a}.


Both the problem and the graph have impacts on the linear rate $(1-\tilde{\delta})$ in Theorem~\ref{theo:linearrate}. Their impacts become explicit when each $f_i$ is globally restricted strongly convex with respect to $x^\star$ with convexity parameter $m>0$. For simplicity, we set $M_i=M\ge m$ $\forall i\in\mathcal{V}$ and $m_\rho=m$. Also, choose $H=\tilde{H}$ and proper $u^k$ $\forall k\ge0$. 
In the expressions of $B_i(\delta)$ $\forall i=1,2,3$, we pick any $\theta_2>0$ and then let $\theta_1$ be such that $\max\{\delta: B_2(\delta)\succeq \mathbf{O}\}>\max\{\delta: B_1(\delta)\succeq \mathbf{O}\}$. We first suppose $m$ increases. Note that $u^k$ still satisfies \eqref{eq:restrictedstrongstepsize} with the same $\eta$, while we allow for a larger $\beta$ and thus a smaller $\sigma$ such that \eqref{eq:sigmabetarange} holds. Accordingly, $\max\{\delta: B_i(\delta)\succeq \mathbf{O}\}$ $\forall i=1,3$ become larger and, therefore, so does $\tilde{\delta}$. Similarly, smaller $M$ yields larger $\tilde{\delta}$. Now suppose the graph topology changes so that $\lambda_{\tilde{H}}$ increases, which implies denser connectivity of $\mathcal{G}$. Since $\bar{\lambda}$ can be any upper bound on $\|H\|$, we can always assume it is unchanged with the graph topology (e.g., $\bar{\lambda}=N$ when $H=L_{\mathcal{G}}\otimes I_d$). Then, it can be seen that $\tilde{\delta}$ increases.

The above discussions suggest that \emph{smaller condition number $\kappa_f:= M/m$ or denser network connectivity may lead to faster convergence} of AMM in solving strongly convex and smooth problems. Indeed, for some particular forms of AMM, such dependence can be more explicit and is comparable to some prior results. For example, let $H=\tilde{H}=(I_N-W)\otimes I_d/4$ with an average matrix $W$ associated with $\mathcal{G}$, $\rho=M\sqrt{2/\lambda_{\tilde{H}}}$, and $u^k$ be such that $\Delta=0$ and $A_a=\frac{\rho(3I_N+W)\otimes I_d}{4}$. Thus, $\rho I_{Nd}\succeq A_a\succeq \frac{\rho}{2}I_{Nd}$, $\|H\|\le \bar{\lambda}=\frac{1}{2}$, and $\lambda_{\tilde{H}}=\frac{1-\lambda_2(W)}{4}\in (0,\frac{1}{2})$, where $\lambda_2(W)$ is the second largest eigenvalue of $W$. Accordingly, $\rho\ge 2M$ and $A_a\succeq MI_{Nd}$. We may pick $\eta=\frac{1}{4}$ and $\sigma=\frac{2}{3}$ so that \eqref{eq:sigmabetarange} holds. Then, by choosing $\theta_1=\theta_2=1$ in Theorem \ref{theo:linearrate}, we can show that 
\begin{equation}\label{eq:tildedeltasparse}
\tilde{\delta}=O(\min\{\tfrac{\sqrt{1-\lambda_2(W)}}{\kappa_f}, 1-\lambda_2(W)\}).
\end{equation}
Most existing linearly convergent specializations of AMM have not revealed such explicit relations as \eqref{eq:tildedeltasparse}, except the following. The decentralized ADMM \cite{Shi14} has the same result as \eqref{eq:tildedeltasparse}, and DIGing \cite{Nedic17} gives $\tilde{\delta}=O(\frac{(1-\lambda_2(W))^2}{\kappa_f^{1.5}})$ when $W\succeq\mathbf{O}$, which is worse than \eqref{eq:tildedeltasparse}. The distributed ADMM \cite{Makhdoumi17} can reach $\tilde{\delta} = O(\frac{1-\lambda_2(W)}{\sqrt{\kappa_f}})$, which is better than \eqref{eq:tildedeltasparse}. This is probably because \eqref{eq:tildedeltasparse} is directly from our analysis for more general problem assumptions and algorithmic forms.


Finally, we remove the condition of $h_i\equiv 0$ $\forall i\in\mathcal{V}$ in Theorem~\ref{theo:linearrate} and derive the following result.

\begin{theorem}\label{thm:sublinearstronglyconvex}
Suppose all the conditions in Lemma~\ref{lemma:levelset} hold. For any $k\ge 1$,
\begin{align}
&\!\|\tilde{H}^{\frac{1}{2}}\bar{\mathbf{x}}^k\|\le \frac{d_0+\|\mathbf{v}^0-\mathbf{v}^\star\|}{\rho k},\label{eq:consensuserrornonsmoothstrongconvex}\displaybreak[0]\\
&\!f(\bar{\mathbf{x}}^k)+h(\bar{\mathbf{x}}^k)-f(\mathbf{x}^\star)-h(\mathbf{x}^\star)\!\le\frac{1}{k}\nonumber\displaybreak[0]\\
&\!\cdot\!\Bigl(\!\frac{d_0^2}{\rho}\bigl(\dfrac{\Delta}{2(2\eta m_\rho\!-\!\beta\Delta)}\!+\!\dfrac{1\!-\!\eta}{1\!-\!\sigma}\bigr)\!\!+\!\!\frac{\|\mathbf{x}^0\!-\!\mathbf{x}^\star\|_{A_u}^2}{2}\!+\!\!\frac{\|\mathbf{v}^0\|^2}{2\rho}\!\Bigr),\label{eq:objectiveerrorupperboundnonsmoothstrongconvex}\displaybreak[0]\\
&\!f(\bar{\mathbf{x}}^k)\!\!+\!h(\bar{\mathbf{x}}^k)\!-\!\!f(\mathbf{x}^\star)\!-\!h(\mathbf{x}^\star)\!\!\ge\!\! -\frac{\|\mathbf{v}^\star\|(d_0\!+\!\!\|\mathbf{v}^0\!-\!\mathbf{v}^\star\|)}{\rho k},\label{eq:objectiveerrorlowerboundnonsmoothstrongconvex}
\end{align}
where $d_0=\sqrt{\rho\|\mathbf{z}^0-\mathbf{z}^\star\|_{\tilde{G}}^2+\rho^2\|\mathbf{x}^0\|_{\tilde{H}}^2}$, $\eta\in(0,1)$ is given in \eqref{eq:restrictedstrongstepsize}, and $\beta>0$ and $\sigma\in (0,1)$ are such that \eqref{eq:sigmabetarange} holds.
\end{theorem}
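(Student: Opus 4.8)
The plan is to mirror the structure of Lemma~\ref{lemma:basicfuncval} and Theorem~\ref{theo:convergence}, but now allowing $u^k$ to be a general surrogate (so that $A^k=\nabla^2 u^k$ varies with $k$) and replacing the constant-weight estimates of Lemma~\ref{lemma:upperboundconstantweight} by bounds driven by the local restricted strong convexity. The first observation is that the consensus identity $\|\tilde{H}^{\frac{1}{2}}\bar{\mathbf{x}}^k\|=\frac{1}{\rho k}\|\mathbf{v}^k-\mathbf{v}^0\|$ of \eqref{eq:consensuserrornonstronglyconvex} uses only the dual recursion \eqref{eq:AMMdual} (summing $\mathbf{v}^{t+1}-\mathbf{v}^t=\rho\tilde{H}^{\frac{1}{2}}\mathbf{x}^{t+1}$ gives $\mathbf{v}^k-\mathbf{v}^0=\rho k\,\tilde{H}^{\frac{1}{2}}\bar{\mathbf{x}}^k$), so it holds verbatim here. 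Hence the whole theorem reduces to (i) bounding $\|\mathbf{v}^k-\mathbf{v}^0\|$ and (ii) establishing a telescoping upper bound on the accumulated objective gap, then applying Jensen's inequality to the running average.

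For (i) I would reuse the Lyapunov estimate behind Lemma~\ref{lemma:levelset}. That lemma is proved by showing the monotone boundedness $\|\mathbf{z}^k-\mathbf{z}^\star\|_{\tilde{G}}^2+\rho\|\mathbf{x}^k\|_{\tilde{H}}^2\le\|\mathbf{z}^0-\mathbf{z}^\star\|_{\tilde{G}}^2+\rho\|\mathbf{x}^0\|_{\tilde{H}}^2=d_0^2/\rho=:V^0$ (which is exactly the statement $\mathbf{x}^k\in\mathcal{C}$). Since $\tilde{G}=\operatorname{diag}(A_a,I_{Nd}/\rho)$, the dual block alone gives $\frac{1}{\rho}\|\mathbf{v}^k-\mathbf{v}^\star\|^2\le V^0$, i.e.\ $\|\mathbf{v}^k-\mathbf{v}^\star\|\le d_0$, whence $\|\mathbf{v}^k-\mathbf{v}^0\|\le d_0+\|\mathbf{v}^0-\mathbf{v}^\star\|$ by the triangle inequality; combined with \eqref{eq:consensuserrornonstronglyconvex} this yields \eqref{eq:consensuserrornonsmoothstrongconvex}. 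The lower bound \eqref{eq:objectiveerrorlowerboundnonsmoothstrongconvex} then follows from the saddle-point inequality $f(\mathbf{x}^\star)+h(\mathbf{x}^\star)\le f(\bar{\mathbf{x}}^k)+h(\bar{\mathbf{x}}^k)+\langle\mathbf{v}^\star,\tilde{H}^{\frac{1}{2}}\bar{\mathbf{x}}^k\rangle$ (using $\tilde{H}^{\frac{1}{2}}\mathbf{x}^\star=\mathbf{0}$), Cauchy--Schwarz, \eqref{eq:consensuserrornonstronglyconvex}, and the same bound on $\|\mathbf{v}^k-\mathbf{v}^0\|$; this part is independent of the form of $u^k$.

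For (ii) I would derive a per-iteration inequality: writing the primal optimality condition \eqref{eq:AMMprimaloptimalitycondition} with the mean-value representation \eqref{eq:meanvaluetheo}, convexity of $h$ and the descent lemma for the $M_i$-smooth $f_i$'s give, for each $t$, $f(\mathbf{x}^{t+1})+h(\mathbf{x}^{t+1})-f(\mathbf{x}^\star)-h(\mathbf{x}^\star)\le\langle-A^t(\mathbf{x}^{t+1}-\mathbf{x}^t)-\rho H\mathbf{x}^{t+1}-\mathbf{q}^t,\,\mathbf{x}^{t+1}-\mathbf{x}^\star\rangle+\frac12\|\mathbf{x}^{t+1}-\mathbf{x}^t\|_{\Lambda_M}^2$. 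I then discard the term $-\rho\langle H\mathbf{x}^{t+1},\mathbf{x}^{t+1}-\mathbf{x}^\star\rangle=-\rho\|\mathbf{x}^{t+1}\|_H^2\le0$ (using $H\mathbf{x}^\star=\mathbf{0}$), telescope the dual term via $\langle\mathbf{q}^t,\mathbf{x}^\star\rangle=0$ and $\mathbf{q}^{t+1}-\mathbf{q}^t=\rho\tilde{H}\mathbf{x}^{t+1}$ into $-\frac{1}{2\rho}(\|\mathbf{v}^{t+1}\|^2-\|\mathbf{v}^t\|^2)+\frac{\rho}{2}\|\mathbf{x}^{t+1}\|_{\tilde{H}}^2$, and apply the polarization identity to the $A^t$ term. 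Summing from $t=0$ to $k-1$ telescopes the dual squares into $\frac{\|\mathbf{v}^0\|^2}{2\rho}$, and the $A^t$-quadratics into $\frac{1}{2}\|\mathbf{x}^0-\mathbf{x}^\star\|_{A^0}^2\le\frac{1}{2}\|\mathbf{x}^0-\mathbf{x}^\star\|_{A_u}^2$; the non-telescoping remainders are an error $\frac{\Delta}{2}\|\mathbf{x}^{t+1}-\mathbf{x}^\star\|^2$ (from replacing $A^t$ by $A^{t+1}$, using $\|A^{t+1}-A^t\|\le\Delta$) together with displacement terms in $\|\mathbf{x}^{t+1}-\mathbf{x}^t\|$ and the leftover $\frac{\rho}{2}\|\mathbf{x}^{t+1}\|_{\tilde{H}}^2$. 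Dividing by $k$ and invoking convexity of $f+h$ at $\bar{\mathbf{x}}^k$ then produces the $\frac{\|\mathbf{x}^0-\mathbf{x}^\star\|_{A_u}^2}{2}+\frac{\|\mathbf{v}^0\|^2}{2\rho}$ part of \eqref{eq:objectiveerrorupperboundnonsmoothstrongconvex}.

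The accumulated remainders are where the restricted strong convexity enters, and this is the main obstacle. Feeding Lemma~\ref{lemma:vrelationforboundedz} and Proposition~\ref{prop:strongconvexity} (with parameter $m_\rho$ on $\mathcal{C}$, valid since $\mathbf{x}^t\in\mathcal{C}$ by Lemma~\ref{lemma:levelset}) into the same telescoping Lyapunov descent that underlies Lemma~\ref{lemma:levelset} yields, under \eqref{eq:sigmabetarange}, summable bounds of the form $\sum_{t}\|\mathbf{x}^t-\mathbf{x}^\star\|^2\le\frac{V^0}{2\eta m_\rho-\beta\Delta}$ and $\sum_{t}\|\mathbf{x}^{t+1}-\mathbf{x}^t\|_{A_a}^2\le\frac{V^0}{1-\sigma}$, with $V^0=d_0^2/\rho$. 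Substituting the first into $\frac{\Delta}{2}\sum\|\mathbf{x}^{t+1}-\mathbf{x}^\star\|^2$ yields the coefficient $\frac{\Delta}{2(2\eta m_\rho-\beta\Delta)}$, and bounding the displacement and $\tilde{H}$-remainders via $\Lambda_M\preceq 2(1-\eta)\sigma A_a$ (the second relation in \eqref{eq:sigmabetarange}) together with the second sum yields the coefficient $\frac{1-\eta}{1-\sigma}$, both multiplying $d_0^2/\rho$; this completes \eqref{eq:objectiveerrorupperboundnonsmoothstrongconvex}. The delicate point is the circularity that $m_\rho$ depends on $\mathcal{C}$ and hence on $A_a$, so one must keep $2\eta m_\rho-\beta\Delta$ bounded away from $0$; I would resolve this exactly as in the well-posedness argument around \eqref{eq:strongercondition}, choosing $u^k$ so that $A_a$ is sandwiched between fixed multiples of $I_{Nd}$ and $m_\rho$ is replaced by an $A_a$-independent lower bound.
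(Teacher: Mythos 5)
Your proposal follows essentially the same route as the paper's proof: parts (i) and the lower bound are exactly the paper's argument (the monotonicity of $c^k=\|\mathbf{z}^k-\mathbf{z}^\star\|_{\tilde{G}}^2+\rho\|\mathbf{x}^k\|_{\tilde{H}}^2$ established inside Lemma~\ref{lemma:levelset} gives $\|\mathbf{v}^k-\mathbf{v}^\star\|\le d_0$, which is fed into \eqref{eq:consensuserrornonstronglyconvex} and \eqref{eq:general_funcvalge}), and for the upper bound the paper likewise builds a per-iteration inequality with the two-sided bounds $A_\ell\preceq\nabla^2u^t\preceq A_u$ (via strong convexity of $\tilde{J}^t(\mathbf{x})=u^t(\mathbf{x})+h(\mathbf{x})+\frac{\rho}{2}\|\mathbf{x}\|_H^2+(\mathbf{v}^t)^T\tilde{H}^{\frac{1}{2}}\mathbf{x}$ modulo $\|\cdot\|_{A_\ell}^2/2$, rather than your polarization of the optimality condition, but the resulting terms are the same), incurs the non-telescoping error $\frac{\Delta}{2}\|\mathbf{x}^t-\mathbf{x}^\star\|^2$, and controls the remainders by summing \eqref{eq:strongzkbound}, exactly the two summable bounds you quote.

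The one concrete issue is your treatment of the penalty and dual cross terms. You discard the entire term $-\rho\|\mathbf{x}^{t+1}\|_H^2\le 0$ and separately rewrite $-\langle\mathbf{q}^t,\mathbf{x}^{t+1}\rangle$ as $-\frac{1}{2\rho}(\|\mathbf{v}^{t+1}\|^2-\|\mathbf{v}^t\|^2)+\frac{\rho}{2}\|\mathbf{x}^{t+1}\|_{\tilde{H}}^2$, which leaves an uncancelled positive remainder $\frac{\rho}{2}\|\mathbf{x}^{t+1}\|_{\tilde{H}}^2$ per step. Summing it via \eqref{eq:strongzkbound} costs roughly $\frac{d_0^2}{2\rho(1-\eta)}$, an extra $O(1/k)$ contribution that the constants in \eqref{eq:objectiveerrorupperboundnonsmoothstrongconvex} do not accommodate (your budget $\frac{1-\eta}{1-\sigma}$ is already consumed by the displacement term $\sum_t\frac{1}{2}\|\mathbf{x}^{t+1}-\mathbf{x}^t\|_{\Lambda_M-A_\ell}^2$). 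The fix is to keep $-\frac{\rho}{2}\|\mathbf{x}^{t+1}\|_H^2$ and cancel the $\tilde{H}$-remainder using $H\succeq\tilde{H}$ from Assumption~\ref{asm:tildeHSmallerthanH}, which is precisely what \eqref{eq:xk1tildeHxvk} does; with that correction your argument reproduces the stated constants. Finally, the circularity of $m_\rho$ versus $A_a$ that you raise at the end is not part of this theorem's proof: \eqref{eq:restrictedstrongstepsize} is a hypothesis here, and the well-posedness discussion around \eqref{eq:strongercondition} concerns only the existence of admissible $u^k$'s, so no extra work is needed there.
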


\begin{proof}
See Appendix \ref{ssec:proofofnonsmoothstronglyconvex}.
\end{proof}

The convergence rates in Theorem~\ref{thm:sublinearstronglyconvex} are of the same order as those in Theorem~\ref{theo:convergence}. Theorem~\ref{theo:convergence} considers a more general optimization problem, while Theorem~\ref{thm:sublinearstronglyconvex} allows for more general $u^k$'s. In the literature, \cite{Notarnicola17,Yuan18,Wu19,Zeng17} also deal with nonsmooth, strongly convex problems. Even under global strong convexity, \cite{Yuan18,Wu19} provide convergence rates slower than $O(1/k)$, and \cite{Notarnicola17} derives an $O(1/k)$ rate of convergence only to dual optimality. Although \cite{Zeng17} attains linear convergence, it considers a more restricted problem, which assumes $f_i$ to be globally restricted strongly convex and the subgradients of $h_i$ to be uniformly bounded.

\begin{rem}
Compared to the existing algorithms in Section~\ref{sec:connection} that can be viewed as specializations of AMM, AMM is able to achieve convergence rates of the same or even better order under identical or weaker problem conditions. Moreover, although the theoretical results in Section~\ref{sec:convergence} require additional conditions on the parameters of AMM besides those in Section~\ref{sec:DAMM}, these parameter conditions are not restrictive. Indeed, when AMM reduces to the algorithms in Section~\ref{sec:connection}, its parameter conditions in Theorem~\ref{theo:convergence} still generalize those in \cite{ShiW15,Xu18,Hong17,ShiW15a,Aybat18,Makhdoumi17,Lei16} and partially overlap those in \cite{Nedic17} for non-strongly convex problems. The parameter conditions in Theorem~\ref{theo:linearrate} are more general than those in \cite{ShiW15,Mokhtari16a}, different from but intersecting with those in \cite{Nedic17,Mokhtari16}, and admittedly, relatively limited compared to those in \cite{Makhdoumi17,Shi14} for strongly convex problems, yet on the premise that $u^k$ is specialized to the corresponding particular forms specified in Section~\ref{sec:connection}.
\end{rem}

\section{Numerical Examples}\label{sec:numerical}



This section demonstrates the competent practical performance of DAMM via numerical examples.

\subsection{Non-identical Local Nonsmooth Objectives}\label{ssec:numconstrained}
Consider the following constrained $l_1$-regularized problem:
\begin{equation}\label{eq:simuprob}
\operatorname{minimize}_{x\in \cap_{i\in \mathcal{V}} X_i}~ \sum_{i\in \mathcal{V}} \left(\frac{1}{2}\|B_ix-b_i\|^2+\frac{1}{N}\|x\|_1\right),
\end{equation}
where each $B_i\in \mathbb{R}^{m\times d}$, $b_i\in \mathbb{R}^m$, and $X_i=\{x|~\|x-a_i\|\le \|a_i\|+1\}$ with $a_i\in \mathbb{R}^d$. Note that $\cap_{i\in \mathcal{V}}X_i$ contains $\mathbf{0}$ and is nonempty. We set $f_i(x)=\frac{1}{2}\|B_ix-b_i\|^2$ and $h_i(x)=\frac{1}{N}\|x\|_1+\mathcal{I}_{X_i}(x)$ $\forall i\in\mathcal{V}$. We choose $N=20$, $d=5$, and $m=3$. Besides, we randomly generate each $B_i$, $b_i$, and $a_i$. The graph $\mathcal{G}$ is also randomly generated and contains $26$ links.

The simulation involves PG-EXTRA \cite{ShiW15a}, D-FBBS \cite{Xu18}, DPGA \cite{Aybat18}, and the distributed ADMM \cite{Makhdoumi17}, which are guaranteed to solve \eqref{eq:simuprob} with distinct $h_i$'s. From Section~\ref{sec:connection}, they are specializations of AMM, and the first three algorithms can also be specialized from DAMM. In addition to these existing methods, we run a particular DAMM with 
$\psi_i^k(\cdot)=\varpi_i(\cdot):=\frac{1}{2}\|\cdot\|_{B_i^TB_i+\epsilon I_d}^2$, $\epsilon>0$, which depends on the problem data. Note that this is a new algorithm for solving \eqref{eq:simuprob}. All these algorithms have similar computational complexities per iteration, while each iteration of the distributed ADMM requires twice the communication cost of the others. The algorithmic forms of PG-EXTRA, D-FBBS, DPGA, and the distributed ADMM are given in Section~\ref{sec:connection}. For DPGA, we let $c_i=c$ $\forall i\in \mathcal{V}$ for some $c>0$ and $\Gamma_{ij}=\frac{1}{2c}[M_{\mathcal{G}}]_{ij}$ $\forall i\in \mathcal{V}$ $\forall j\in \mathcal{N}_i\cup\{i\}$, where $M_{\mathcal{G}}$ is the Metropolis weight matrix defined below Assumption~\ref{asm:weightmatrix}. We also set the weight matrix $\Gamma=\frac{1}{2}M_{\mathcal{G}}$ in the distributed ADMM. We assign $P=\tilde{P}=\frac{1}{2}M_{\mathcal{G}}$ to the above new DAMM and to PG-EXTRA and D-FBBS when cast in the form of DAMM. The remaining parameters are all fine-tuned in their theoretical ranges to achieve the best possible performance.

\begin{figure}[tb]
	\centering
	\subfigure[Optimality error at $\bar{\mathbf{x}}^k$]{
		\includegraphics[width=0.65\linewidth,height=0.45\linewidth]{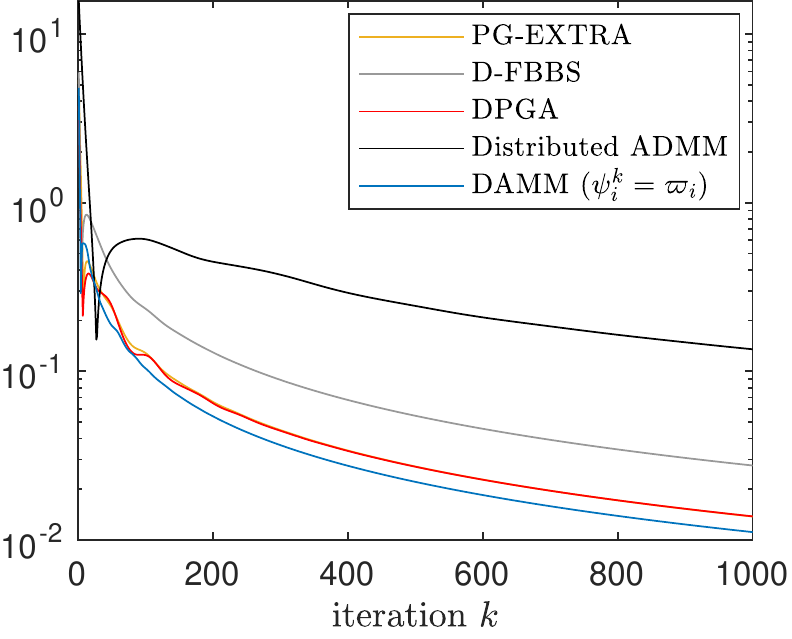}\label{fig:funcvalaverage}}
	\vfill
	\subfigure[Optimality error at $\mathbf{x}^k$]{
		\includegraphics[width=0.65\linewidth,height=0.45\linewidth]{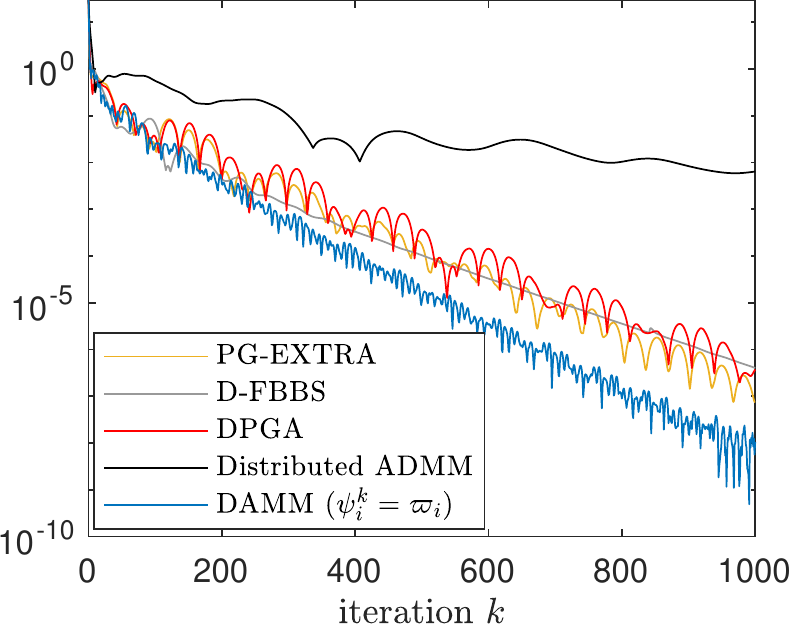}\label{fig:consensusaverage}}
	\caption{Convergence performance of PG-EXTRA, D-FBBS, DPGA, distributed ADMM, and DAMM with $\psi_i^k(\cdot)=\varpi_i(\cdot)$.}
	\label{fig:simu}
\end{figure}

Figure~\ref{fig:simu} plots the optimality error $|f(\cdot)+h(\cdot)-f(\mathbf{x}^\star)-h(\mathbf{x}^\star)|+\|\tilde{H}^{\frac{1}{2}}(\cdot)\|$ (which includes both the objective error and the consensus error) at the running average $\bar{\mathbf{x}}^k$ and the iterate $\mathbf{x}^k$, respectively. For each of the aforementioned algorithms, the running average produces a smoother curve, while the iterate converges faster. The new DAMM with $\psi_i^k(\cdot)=\varpi_i(\cdot)$ outperforms the other four methods, suggesting that AMM not only unifies a diversity of existing methods, but also creates novel distributed algorithms that achieve superior performance in solving various convex composite optimization problems.

\subsection{Comparison with Methods Using Surrogate Functions}\label{ssec:numunconstrained}

This subsection compares the convergence performance of the particular DAMM with $\psi_i^k(\cdot)=\varpi_i(\cdot)$ in Section~\ref{ssec:numconstrained} with that of NEXT \cite{Lorenzo16} and SONATA \cite{Scutari19}, which also utilize surrogate functions. However, NEXT and SONATA are only guaranteed to address problem~\eqref{eq:prob} with identical $h_i$'s. Thus, we change $X_i$ $\forall i\in\mathcal{V}$ in \eqref{eq:simuprob} to the unit ball $\{x|~\|x\|\le 1\}$. The remaining settings comply with Section~\ref{ssec:numconstrained}.

At each iteration, the computational complexities of all the three algorithms are roughly the same, yet the communication costs of NEXT and SONATA double that of the particular DAMM. For NEXT and SONATA, we follow the simulations in \cite{Lorenzo16,Scutari19} to approximate $f_i(x_i)$ by $f_i(x_i)+\frac{\tau}{2}\|x_i-x_i^k\|^2$ for some $\tau>0$ at each iteration $k$, and choose the average matrix as $I_N-M_{\mathcal{G}}$, where $M_{\mathcal{G}}$ is the Metropolis weight matrix. The diminishing step-size of NEXT is set as $c/k$ with $c>0$, and SONATA adopts a fixed step-size. Again, we fine-tune all the algorithm parameters within their theoretical ranges.

Figure~\ref{fig:unconstrainedsimu} displays the optimality error $|f(\mathbf{x}^k)+h(\mathbf{x}^k)-f(\mathbf{x}^\star)-h(\mathbf{x}^\star)|+\|\tilde{H}^{\frac{1}{2}}\mathbf{x}^k\|$ generated by the aforementioned algorithms. Observe that the particular DAMM converges much faster than the other two methods. The main reason for the slow convergence of SONATA in solving this numerical example is that its theoretical step-size is very small.

\begin{figure}[tb]
	\centering
	\includegraphics[width=0.65\linewidth,height=0.45\linewidth]{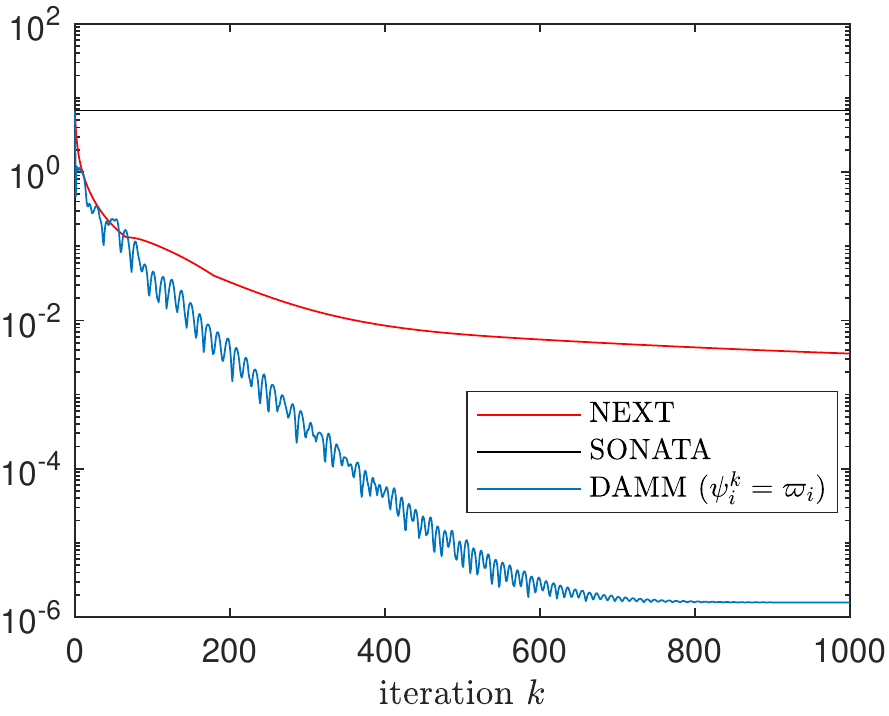}
	\caption{Convergence performance of NEXT, SONATA, and DAMM with $\psi_i^k(\cdot)=\varpi_i(\cdot)$.}
	\label{fig:unconstrainedsimu}
\end{figure}

\section{Conclusion}\label{sec:conclusion}

We have introduced a unifying Approximate Method of Multipliers (AMM) that emulates the Method of Multipliers via a surrogate function. Proper designs of the surrogate function lead to a wealth of distributed algorithms for solving convex composite optimization over undirected graphs. Sublinear and linear convergence rates for AMM are established under various mild conditions. The proposed AMM and its distributed realizations generalize a number of well-noted methods in the literature. Moreover, the theoretical convergence rates of AMM are no worse than and sometimes even better than the convergence results of such existing specializations of AMM in the sense of rate order, problem assumption, etc. The generality of AMM as well as its convergence analysis provides insights into the design and analysis of distributed primal-dual optimization algorithms, and allows us to explore high-performance specializations of AMM when addressing specific convex optimization problems in practice.

\appendix



\subsection{Proof of Lemma~\ref{lemma:vrelationforboundedz}}\label{ssec:proofvrelationforboundedz}

Define $g^{k+1}\!=\!-\nabla u^k(\mathbf{x}^{k+1})\!-\!\rho H\mathbf{x}^{k+1}\!-\!\tilde{H}^\frac{1}{2}\mathbf{v}^k$ and $g^\star\!=\!-\nabla f(\mathbf{x}^\star)\!-\!\tilde{H}^{\frac{1}{2}}\mathbf{v}^\star$. Due to \eqref{eq:AMMprimaloptimalitycondition} and \eqref{eq:zgs_opt}, $g^{k+1}\in\partial h(\mathbf{x}^{k+1})$ and $g^\star\in\partial h(\mathbf{x}^\star)$. It follows from \eqref{eq:meanvaluetheo} that
\begin{align}
\tilde{H}^{\frac{1}{2}}(\mathbf{v}^\star-\mathbf{v}^k)=&\nabla f(\mathbf{x}^k)-\nabla f(\mathbf{x}^\star)+A^k(\mathbf{x}^{k+1}-\mathbf{x}^k)\nonumber\displaybreak[0]\\
&+g^{k+1}-g^\star+\rho H\mathbf{x}^{k+1}.\label{eq:squareroottildeHvstarminusvk}
\end{align}
Also, since $\operatorname{Null}(\tilde{H}^{\frac{1}{2}})=\operatorname{Null}(\tilde{H})=S$, we have $\tilde{H}^{\frac{1}{2}}\mathbf{x}^\star=\mathbf{0}$. This, along with \eqref{eq:AMMdual}, gives $\mathbf{v}^k-\mathbf{v}^{k+1}=-\rho\tilde{H}^{\frac{1}{2}}(\mathbf{x}^{k+1}-\mathbf{x}^\star)$. Thus, $\langle \mathbf{v}^k-\mathbf{v}^{k+1}, \mathbf{v}^k-\mathbf{v}^\star\rangle=-\rho\langle \tilde{H}^{\frac{1}{2}}(\mathbf{x}^{k+1}-\mathbf{x}^\star), \mathbf{v}^k-\mathbf{v}^\star\rangle=\rho\langle \mathbf{x}^{k+1}-\mathbf{x}^\star, \tilde{H}^{\frac{1}{2}}(\mathbf{v}^\star-\mathbf{v}^k)\rangle$. By substituting \eqref{eq:squareroottildeHvstarminusvk} into the above equation and using $H\mathbf{x}^\star=\mathbf{0}$, we obtain $\langle \mathbf{v}^k-\mathbf{v}^{k+1}, \mathbf{v}^k-\mathbf{v}^\star\rangle=\rho\langle \mathbf{x}^{k+1}\!-\!\mathbf{x}^\star, \nabla f(\mathbf{x}^k)\!-\!\nabla f(\mathbf{x}^\star)\rangle\!+\!\rho\langle \mathbf{x}^{k+1}\!-\!\mathbf{x}^\star, g^{k+1}\!-\!g^\star\rangle
+\rho\langle \mathbf{x}^{k+1}-\mathbf{x}^\star, A^k(\mathbf{x}^{k+1}-\mathbf{x}^k)\rangle+\rho^2\|\mathbf{x}^{k+1}\|_H^2$. In this equation, $\langle \mathbf{x}^{k+1}-\mathbf{x}^\star, g^{k+1}-g^\star\rangle\ge 0$, because $g^{k+1}\in \partial h(\mathbf{x}^{k+1})$, $g^\star\in \partial h(\mathbf{x}^\star)$, and $h$ is convex. Moreover, due to $H\succeq \tilde{H}$ and \eqref{eq:AMMdual}, we have $\rho^2\|\mathbf{x}^{k+1}\|_H^2\ge \rho^2\|\mathbf{x}^{k+1}\|_{\tilde{H}}^2=\|\mathbf{v}^{k+1}-\mathbf{v}^k\|^2=\langle \mathbf{v}^k-\mathbf{v}^{k+1}, \mathbf{v}^k-\mathbf{v}^\star\rangle-\langle \mathbf{v}^k-\mathbf{v}^{k+1}, \mathbf{v}^{k+1}-\mathbf{v}^\star\rangle$. It follows that $\frac{1}{\rho}\langle\mathbf{v}^k\!-\!\mathbf{v}^{k+1},\mathbf{v}^{k+1}\!-\!\mathbf{v}^\star\rangle\ge\langle \mathbf{x}^{k+1}\!-\!\mathbf{x}^\star,\nabla f(\mathbf{x}^k)\!-\!\nabla f(\mathbf{x}^\star)\rangle+\langle \mathbf{x}^{k+1}-\mathbf{x}^\star,A^k(\mathbf{x}^{k+1}-\mathbf{x}^k)\rangle$. Hence, to prove \eqref{eq:vrelationforboundedz}, it suffices to show that for any $\eta\in(0,1)$,
\begin{align}
&\langle \mathbf{x}^{k+1}-\mathbf{x}^\star, \nabla f(\mathbf{x}^k)-\nabla f(\mathbf{x}^\star)\rangle\nonumber\displaybreak[0]\\
&\ge\!\eta\langle \mathbf{x}^k\!-\!\mathbf{x}^\star, \nabla f(\mathbf{x}^k)\!-\!\nabla f(\mathbf{x}^\star)\rangle\!-\!\frac{\|\mathbf{x}^{k+1}\!-\!\mathbf{x}^k\|_{\Lambda_M}^2}{4(1-\eta)}.\label{eq:xt1xstargrafxtgrafxstar}
\end{align}
To do so, note from the AM-GM inequality and the Lipschitz continuity of $\nabla f_i$ that for any $i\in \mathcal{V}$ and $c_i>0$, 
$\langle x_i^{k+1}-x_i^k, \nabla f_i(x_i^k)-\nabla f_i(x^\star)\rangle\ge  -c_i\|\nabla f_i(x_i^k)-\nabla f_i(x^\star)\|^2-\frac{1}{4c_i}\|x_i^{k+1}-x_i^k\|^2\ge -c_iM_i\langle x_i^k-x^\star, \nabla f_i(x_i^k)-\nabla f_i(x^\star)\rangle-\frac{1}{4c_i}\|x_i^{k+1}-x_i^k\|^2$. By adding the above inequality over all $i\in \mathcal{V}$ with $c_i=\frac{1-\eta}{M_i}$, we have $\langle \mathbf{x}^{k+1}-\mathbf{x}^k, \nabla f(\mathbf{x}^k)-\nabla f(\mathbf{x}^\star)\rangle+(1-\eta)\langle \mathbf{x}^k-\mathbf{x}^\star, \nabla f(\mathbf{x}^k)-\nabla f(\mathbf{x}^\star)\rangle\ge-\frac{\|\mathbf{x}^{k+1}-\mathbf{x}^k\|_{\Lambda_M}^2}{4(1-\eta)}$. Then, adding $\eta\langle \mathbf{x}^k-\mathbf{x}^\star, \nabla f(\mathbf{x}^k)-\nabla f(\mathbf{x}^\star)\rangle$ to both sides of this inequality leads to \eqref{eq:xt1xstargrafxtgrafxstar}.

\subsection{Proof of Lemma~\ref{lemma:basicfuncval}}\label{ssec:proofofbasicfuncval}

Let $k\ge1$. Due to \eqref{eq:AMMdual}, $\tilde{H}^{\frac{1}{2}}\bar{\mathbf{x}}^k = \frac{1}{k}\sum_{t=1}^k \tilde{H}^{\frac{1}{2}}\mathbf{x}^t=\frac{1}{\rho k}(\mathbf{v}^k-\mathbf{v}^0)$, so that \eqref{eq:consensuserrornonstronglyconvex} holds. Next, we prove \eqref{eq:general_funcvalle}. For simplicity, define $J^t(\mathbf{x})=\langle\nabla f(\mathbf{x}^t), \mathbf{x}\rangle+\|\mathbf{x}-\mathbf{x}^t\|_A^2/2+h(\mathbf{x})+\rho\|\mathbf{x}\|_H^2/2+(\mathbf{v}^t)^T\tilde{H}^{\frac{1}{2}}\mathbf{x}$ $\forall t\ge0$. Since $J^t(\mathbf{x})-\frac{\|\mathbf{x}\|_A^2}{2}$ is convex, $\bigl(J^t(\mathbf{x}^\star)-\frac{\|\mathbf{x}^\star\|_A^2}{2}\bigr)-\bigl(J^t(\mathbf{x}^{t+1})-\frac{\|\mathbf{x}^{t+1}\|_A^2}{2}\bigr)\ge\langle\tilde{g}^{t+1}\!-\!A\mathbf{x}^{t+1}, \mathbf{x}^\star\!-\!\mathbf{x}^{t+1}\rangle$ for all $\tilde{g}^{t+1}\!\in\!\partial J^t(\mathbf{x}^{t+1})$. Due to \eqref{eq:AMMprimal}, $\mathbf{0}\in \partial J^t(\mathbf{x}^{t+1})$. Thus, by letting $\tilde{g}^{t+1}=\mathbf{0}$ in the above inequality, we obtain 
\begin{equation}\label{eq:constantmatrixxminimize}
J^t(\mathbf{x}^{t+1})-J^t(\mathbf{x}^\star)\le-\|\mathbf{x}^{t+1}-\mathbf{x}^\star\|_A^2/2.
\end{equation}
In addition, because of \eqref{eq:AMMdual} and $H\succeq \tilde{H}$,
\begin{align}
&\langle \tilde{H}^{\frac{1}{2}}\mathbf{x}^{t+1}, \mathbf{v}^t\rangle = \frac{1}{\rho}\langle \mathbf{v}^{t+1}-\mathbf{v}^t, \mathbf{v}^t\rangle\nonumber\displaybreak[0]\\ 
=&(\|\mathbf{v}^{t+1}\|^2-\|\mathbf{v}^t\|^2-\|\mathbf{v}^{t+1}-\mathbf{v}^t\|^2)/(2\rho)\nonumber\displaybreak[0]\\
=&(\|\mathbf{v}^{t+1}\|^2-\|\mathbf{v}^t\|^2)/(2\rho)-\rho\|\mathbf{x}^{t+1}\|_{\tilde{H}}^2/2\nonumber\displaybreak[0]\\
\ge&(\|\mathbf{v}^{t+1}\|^2-\|\mathbf{v}^t\|^2)/(2\rho)-\rho\|\mathbf{x}^{t+1}\|_H^2/2.\label{eq:xk1tildeHxvk}
\end{align}
Also, due to the convexity and the $M_i$-smoothness of each $f_i$,
\begin{align}
& f(\mathbf{x}^{t+1})-f(\mathbf{x}^\star)= (f(\mathbf{x}^{t+1})-f(\mathbf{x}^t))+(f(\mathbf{x}^t)-f(\mathbf{x}^\star))\nonumber\displaybreak[0]\\
\le & \langle \nabla f(\mathbf{x}^t), \mathbf{x}^{t+1}\!\!-\!\mathbf{x}^t\rangle\!+\!\|\mathbf{x}^{t+1}\!\!-\!\mathbf{x}^t\|_{\Lambda_M}^2/2\!+\!\langle \nabla f(\mathbf{x}^t), \mathbf{x}^t\!-\!\mathbf{x}^\star\rangle\nonumber\displaybreak[0]\\
=   & \langle \nabla f(\mathbf{x}^t), \mathbf{x}^{t+1}-\mathbf{x}^\star\rangle+\|\mathbf{x}^{t+1}-\mathbf{x}^t\|_{\Lambda_M}^2/2.\label{eq:fxk1minusfstarupperbound}
\end{align}
Through combining \eqref{eq:constantmatrixxminimize}, \eqref{eq:xk1tildeHxvk}, and \eqref{eq:fxk1minusfstarupperbound} and utilizing $H\mathbf{x}^\star=\tilde{H}^{\frac{1}{2}}\mathbf{x}^\star = \mathbf{0}$, we derive 
\begin{align}
&f(\mathbf{x}^{t+1})+h(\mathbf{x}^{t+1})-f(\mathbf{x}^\star)-h(\mathbf{x}^\star)+\frac{\|\mathbf{v}^{t+1}\|^2-\|\mathbf{v}^t\|^2}{2\rho}\nonumber\displaybreak[0]\\
\le&\frac{\|\mathbf{x}^t\!-\!\mathbf{x}^\star\|_A^2}{2}\!-\!\frac{\|\mathbf{x}^{t+1}\!-\!\mathbf{x}^\star\|_A^2}{2}\!+\!\frac{\|\mathbf{x}^{t+1}\!-\!\mathbf{x}^t\|_{\Lambda_M-A}^2}{2}.\label{eq:nonstrongfplushoptimaldistance}
\end{align}
Now adding \eqref{eq:nonstrongfplushoptimaldistance} over $t=0,\ldots,k-1$ yields $\sum_{t=1}^k \left(f(\mathbf{x}^t)+h(\mathbf{x}^t)-f(\mathbf{x}^\star)-h(\mathbf{x}^\star)\right)\le\frac{\|\mathbf{v}^0\|^2}{2\rho}+\frac{\|\mathbf{x}^0-\mathbf{x}^\star\|_A^2}{2} +\sum_{t=0}^{k-1}\frac{\|\mathbf{x}^{t+1}-\mathbf{x}^t\|_{\Lambda_M-A}^2}{2}$. Moreover, the convexity of $f$ and $h$ implies $f(\bar{\mathbf{x}}^k)+h(\bar{\mathbf{x}}^k) \le \frac{1}{k}\sum_{t=1}^k \left(f(\mathbf{x}^t)+h(\mathbf{x}^t)\right)$. Combining the above results in \eqref{eq:general_funcvalle}.

Finally, to prove \eqref{eq:general_funcvalge}, note that $\mathbf{x}^\star$ is an optimal solution of $\min_{\mathbf{x}} f(\mathbf{x})+h(\mathbf{x})+\langle \mathbf{v}^\star, \tilde{H}^{\frac{1}{2}}\mathbf{x}\rangle$. It then follows from $\tilde{H}^{\frac{1}{2}}\mathbf{x}^\star = \mathbf{0}$ that $f(\mathbf{x}^\star)+h(\mathbf{x}^\star) \le f(\bar{\mathbf{x}}^t)+h(\bar{\mathbf{x}}^t)+\langle \mathbf{v}^\star, \tilde{H}^{\frac{1}{2}}\bar{\mathbf{x}}^t\rangle\le f(\bar{\mathbf{x}}^t)+h(\bar{\mathbf{x}}^t)+\|\mathbf{v}^\star\|\cdot \|\tilde{H}^{\frac{1}{2}}\bar{\mathbf{x}}^t\|$. This, together with \eqref{eq:consensuserrornonstronglyconvex}, implies \eqref{eq:general_funcvalge}.

\subsection{Proof of Lemma~\ref{lemma:upperboundconstantweight}}\label{ssec:proofofupperboundconstantweight}

From the definition of $G$, for each $t\ge0$,
\begin{align}
&\|\mathbf{z}^t-\mathbf{z}^\star\|_G^2-\|\mathbf{z}^{t+1}-\mathbf{z}^\star\|_G^2-\|\mathbf{z}^{t+1}-\mathbf{z}^t\|_G^2\nonumber\displaybreak[0]\\
=&2\langle G(\mathbf{z}^t-\mathbf{z}^{t+1}), \mathbf{z}^{t+1}-\mathbf{z}^\star\rangle\nonumber\displaybreak[0]\\
=&2\langle A(\mathbf{x}^t\!\!-\!\mathbf{x}^{t+1}), \mathbf{x}^{t+1}\!\!-\!\mathbf{x}^\star\rangle\!+\!\frac{2}{\rho}\langle \mathbf{v}^t\!\!-\!\mathbf{v}^{t+1}, \mathbf{v}^{t+1}\!\!-\!\mathbf{v}^\star\rangle.\label{eq:zkminuszstarexpansion}
\end{align}
By substituting \eqref{eq:vrelationforboundedz} with $A^t=A$ and $\eta=0$ into \eqref{eq:zkminuszstarexpansion}, 
\begin{align}
&\|\mathbf{z}^t-\mathbf{z}^\star\|_G^2-\|\mathbf{z}^{t+1}-\mathbf{z}^\star\|_G^2\ge\frac{1}{\rho}\|\mathbf{v}^{t+1}-\mathbf{v}^t\|^2\nonumber\displaybreak[0]\\
&+\|\mathbf{x}^{t+1}-\mathbf{x}^t\|_{A-\frac{\Lambda_M}{2}}^2\ge(1-\underline{\sigma})\|\mathbf{x}^{t+1}-\mathbf{x}^t\|_A^2,\label{eq:monotonezdistancegeneralconvex}
\end{align}
where the last step is due to $\underline{\sigma}A\succeq \Lambda_M/2$. Adding \eqref{eq:monotonezdistancegeneralconvex} over $t=0,\ldots,k-1$ and using $\|\mathbf{v}^k\!-\!\mathbf{v}^\star\|\!\le\!\sqrt{\rho}\|\mathbf{z}^k\!-\!\mathbf{z}^\star\|_G$, we have $\frac{1}{\rho}\|\mathbf{v}^k-\mathbf{v}^\star\|^2+(1-\underline{\sigma})\sum_{t=0}^{k-1}\|\mathbf{x}^{t+1}-\mathbf{x}^t\|_A^2 \le \|\mathbf{z}^0-\mathbf{z}^\star\|_G^2$. Therefore, \eqref{eq:vkv0upperboundconstant} can be proved by combining the above inequality with $\|\mathbf{v}^k-\mathbf{v}^0\|\le \|\mathbf{v}^k-\mathbf{v}^\star\|+\|\mathbf{v}^0-\mathbf{v}^\star\|$. Furthermore, the above inequality, along with $A\succ \Lambda_M-A$, implies \eqref{eq:xt1xtupperboundconstant}.

\subsection{Proof of Proposition \ref{prop:strongconvex}}\label{ssec:proofofpropstrongtildef}
	
For convenience, let $\tilde{f}(x)=\sum_{i\in\mathcal{V}} f_i(x)$. Since $\nabla^2 \tilde{f}(x^\star)\succ \mathbf{O}$ and $\nabla^2 \tilde{f}$ is continuous around $x^\star$, there exist $\epsilon_1\in (0,\epsilon]$ and $c>0$ such that $\nabla^2\tilde{f}(x)\succeq cI_d$ $\forall x\in B(x^\star,\epsilon_1)$. Therefore,
	\begin{equation}\label{eq:strongconvexitytildef}
		\langle\nabla\tilde{f}(x)\!-\!\nabla\tilde{f}(x^\star), x-x^\star\rangle\!\ge\! c\|x-x^\star\|^2, \forall x\in B(x^\star,\epsilon_1).
	\end{equation}
	Let $C\subset \mathbb{R}^d$ be any convex and compact set containing $x^\star$. Suppose $x\in C-B(x^\star,\epsilon_1)$. Let $z=x^\star+\epsilon_1\frac{x-x^\star}{\|x-x^\star\|}\in B(x^\star,\epsilon_1)$. Since $\tilde{f}$ is convex, $x-x^\star=\frac{x-z}{1-\epsilon_1/\|x-x^\star\|}$, and $\|x-x^\star\|>\epsilon_1$, we have $\langle \nabla \tilde{f}(x)-\nabla \tilde{f}(z), x-x^\star\rangle\ge 0$. Moreover, using $x-x^\star=\frac{\|x-x^\star\|}{\epsilon_1}(z-x^\star)$, \eqref{eq:strongconvexitytildef}, and $\|z-x^\star\|=\epsilon_1$, we obtain $\langle \nabla \tilde{f}(z)-\nabla \tilde{f}(x^\star), x-x^\star\rangle=\frac{\|x-x^\star\|}{\epsilon_1}\langle \nabla \tilde{f}(z)-\nabla \tilde{f}(x^\star), z-x^\star\rangle\ge\frac{c\|x-x^\star\|}{\epsilon_1}\|z-x^\star\|^2=\frac{c\epsilon_1}{\|x-x^\star\|}\|x-x^\star\|^2$. Adding the above two inequalities leads to $\langle \nabla \tilde{f}(x)-\nabla \tilde{f}(x^\star), x-x^\star\rangle\ge\frac{c\epsilon_1}{\max_{y\in C}\|y-x^\star\|}\|x-x^\star\|^2$. This, along with \eqref{eq:strongconvexitytildef}, implies that $\tilde{f}$ is restricted strongly convex with respect to $x^\star$ on $C$, so that Assumption~\ref{asm:stronglyconvex} holds.

\subsection{Proof of Lemma~\ref{lemma:levelset}}\label{ssec:proofoflevelset}

For simplicity, define $c^k:=\|\mathbf{z}^k-\mathbf{z}^\star\|_{\tilde{G}}^2+\rho\|\mathbf{x}^k\|_{\tilde{H}}^2$. Since $\tilde{H}\mathbf{x}^\star=\mathbf{0}$, $\|\mathbf{x}^k-\mathbf{x}^\star\|_{A_a+\rho\tilde{H}}^2=\|\mathbf{x}^k-\mathbf{x}^\star\|_{A_a}^2+\rho\|\mathbf{x}^k\|_{\tilde{H}}^2\le c^k$. Thus, to show $\mathbf{x}^k\in \mathcal{C}$ $\forall k\ge 0$, it suffices to show that $c^k\le c^0$ $\forall k\ge 0$. We prove this by induction. Clearly, $c^k\le c^0$ holds for $k=0$. Now suppose $c^k\le c^0$ and below we show $c^{k+1}\le c^k\le c^0$. Using \eqref{eq:vrelationforboundedz} and \eqref{eq:zkminuszstarexpansion} with $G$ replaced by $\tilde{G}$,
\begin{align}
&\|\mathbf{z}^k-\mathbf{z}^\star\|_{\tilde{G}}^2-\|\mathbf{z}^{k+1}-\mathbf{z}^\star\|_{\tilde{G}}^2-\|\mathbf{z}^{k+1}-\mathbf{z}^k\|_{\tilde{G}}^2\nonumber\displaybreak[0]\\
&\ge 2\eta\langle \mathbf{x}^k\!-\!\mathbf{x}^\star, \nabla f(\mathbf{x}^k)\!-\!\nabla f(\mathbf{x}^\star)\rangle\nonumber\displaybreak[0]\\
&\!\!\!+2\langle \mathbf{x}^{k+1}\!-\!\mathbf{x}^\star, (A^k\!-\!A_a)(\mathbf{x}^{k+1}\!-\!\mathbf{x}^k)\rangle\!-\!\tfrac{\|\mathbf{x}^{k+1}\!-\mathbf{x}^k\|_{\Lambda_M}^2}{2(1-\eta)}.\label{eq:rstrictedstrongztminuszstar}
\end{align}
On the other hand, since $A_\ell\preceq A^k\preceq A_u$, we have $\|A^k-A_a\|\le \Delta/2$. Due to \eqref{eq:restrictedstrongstepsize}, there exist $\beta>0$ and $\sigma\in(0,1)$ such that \eqref{eq:sigmabetarange} holds. Then, through the AM-GM inequality, $\langle \mathbf{x}^{k+1}-\mathbf{x}^\star, (A^k-A_a)(\mathbf{x}^{k+1}-\mathbf{x}^k)\rangle=\langle \mathbf{x}^k-\mathbf{x}^\star, (A^k-A_a)(\mathbf{x}^{k+1}-\mathbf{x}^k)\rangle+\langle \mathbf{x}^{k+1}-\mathbf{x}^k, (A^k-A_a)(\mathbf{x}^{k+1}-\mathbf{x}^k)\rangle\ge-\frac{1}{2}(\beta\Delta\|\mathbf{x}^k-\mathbf{x}^\star\|^2+\frac{\Delta}{4\beta}\|\mathbf{x}^{k+1}-\mathbf{x}^k\|^2)-\frac{\Delta}{2}\|\mathbf{x}^{k+1}\!-\!\mathbf{x}^k\|^2$, which further implies
\begin{align*}
&\|\mathbf{x}^{k+1}-\mathbf{x}^k\|_{A_a}^2\!+\!2\langle \mathbf{x}^{k+1}-\mathbf{x}^\star, (A^k-A_a)(\mathbf{x}^{k+1}-\mathbf{x}^k)\rangle\displaybreak[0]\\
&-\!\tfrac{\|\mathbf{x}^{k+1}-\mathbf{x}^k\|_{\Lambda_M}^2}{2(1-\eta)}\!\ge\!(1\!-\!\sigma)\|\mathbf{x}^{k+1}\!-\!\mathbf{x}^k\|_{A_a}^2\!-\!\beta\Delta\|\mathbf{x}^k\!-\!\mathbf{x}^\star\|^2. 
\end{align*}
Substituting this into \eqref{eq:rstrictedstrongztminuszstar} and applying \eqref{eq:AMMdual}, we obtain
\begin{align}
&c^k-c^{k+1}\ge (1-\sigma)\|\mathbf{x}^{k+1}-\mathbf{x}^k\|_{A_a}^2-\beta\Delta\|\mathbf{x}^k-\mathbf{x}^\star\|^2\nonumber\displaybreak[0]\\
&+2\eta\langle \mathbf{x}^k-\mathbf{x}^\star, \nabla f(\mathbf{x}^k)-\nabla f(\mathbf{x}^\star)\rangle+\rho\|\mathbf{x}^k\|_{\tilde{H}}^2.\label{eq:monotonekeyeq}
\end{align}
Due to the hypothesis $c^k\le c^0$, we have $\mathbf{x}^k\in \mathcal{C}$. It then follows from Proposition~\ref{prop:strongconvexity} and $\tilde{H}\mathbf{x}^\star=\mathbf{0}$ that $2\langle \mathbf{x}^k-\mathbf{x}^\star, \nabla f(\mathbf{x}^k)-\nabla f(\mathbf{x}^\star)\rangle \ge 2m_\rho\|\mathbf{x}^k-\mathbf{x}^\star\|^2-\rho\|\mathbf{x}^k\|_{\tilde{H}}^2$. This and \eqref{eq:monotonekeyeq} give
\begin{align}
&c^k-c^{k+1}\ge(1-\sigma)\|\mathbf{x}^{k+1}-\mathbf{x}^k\|_{A_a}^2\nonumber\displaybreak[0]\\
&+(2\eta m_\rho-\beta\Delta)\|\mathbf{x}^k-\mathbf{x}^\star\|^2+\rho(1-\eta)\|\mathbf{x}^k\|_{\tilde{H}}^2.\label{eq:strongzkbound}
\end{align}
Note that $\sigma\in(0,1)$, $\beta\Delta< 2\eta m_\rho$ due to \eqref{eq:sigmabetarange}, and $\eta\in (0,1)$. Thus, the right-hand side of \eqref{eq:strongzkbound} is nonnegative, which implies $c^{k+1}\le c^k\le c^0$.

\subsection{Proof of Theorem~\ref{theo:linearrate}}\label{ssec:proofoflinearrate}

Recall from the paragraph above Theorem~\ref{theo:linearrate} that we force $\mathbf{v}^0-\mathbf{v}^\star\in S^\bot$. Also, due to \eqref{eq:AMMdual}, $\mathbf{v}^k-\mathbf{v}^0\in \operatorname{Range}(\tilde{H}^{\frac{1}{2}})=S^\bot$. Hence, $\mathbf{v}^k-\mathbf{v}^\star\in \operatorname{Range}(\tilde{H}^{\frac{1}{2}})$. Moreover, since $h(\mathbf{x})\equiv0$, $g^{k+1}=g^\star=\mathbf{0}$. It then follows from \eqref{eq:squareroottildeHvstarminusvk} that $\mathbf{v}^\star-\mathbf{v}^k= (\tilde{H}^{\frac{1}{2}})^\dag(\nabla f(\mathbf{x}^k)-\nabla f(\mathbf{x}^\star)+(A^k+\rho H)(\mathbf{x}^{k+1}-\mathbf{x}^k)+\rho H\mathbf{x}^k)$. Then, via the AM-GM inequality, for any $\theta_1,\theta_2>0$,
\begin{align}
&\|\mathbf{v}^k-\mathbf{v}^\star\|^2 \le  (1+\theta_1)(1+\theta_2)\|\nabla f(\mathbf{x}^k)-\nabla f(\mathbf{x}^\star)\|_{\tilde{H}^\dag}^2\nonumber\displaybreak[0]\\
&\quad +2(1+\theta_1)(1+1/\theta_2)\|\mathbf{x}^{k+1}-\mathbf{x}^k\|_{(A^k)^T\tilde{H}^\dag A^k+\rho^2 H\tilde{H}^\dag H}^2\nonumber\displaybreak[0]\\
&\quad+\rho^2(1+1/\theta_1)\|H\mathbf{x}^k\|_{\tilde{H}^\dag}^2.\label{eq:tildeHsquarerootvkminusvstar}
\end{align}
From \eqref{eq:tildeHsquarerootvkminusvstar}, $H\tilde{H}^\dag H\preceq \bar{\lambda}H^{\frac{1}{2}}\tilde{H}^\dag H^{\frac{1}{2}}$, $\tilde{H}^\dag\preceq \frac{I_{Nd}}{\lambda_{\tilde{H}}}$, and the $M_i$-smoothness of each $f_i$, we have that for all $\delta\in(0,1)$,
\begin{align*}
&\delta\|\mathbf{z}^k-\mathbf{z}^\star\|_{\tilde{G}}^2\le\delta\|\mathbf{x}^k-\mathbf{x}^\star\|_{\frac{(1+\theta_1)(1+\theta_2)\Lambda_M^2}{\rho\lambda_{\tilde{H}}}+A_a}^2\nonumber\displaybreak[0]\\
&+\!\tfrac{2\delta(1\!+\!\theta_1)(1\!+\!1/\theta_2)}{\rho}\|\mathbf{x}^{k+1}\!-\!\mathbf{x}^k\|_{A_u^2/\lambda_{\tilde{H}}\!+\!\rho^2\bar{\lambda} H^{\frac{1}{2}}\tilde{H}^\dag H^{\frac{1}{2}}}^2\displaybreak[0]\\
&+\delta\rho(1+1/\theta_1)\|H\mathbf{x}^k\|_{\tilde{H}^\dag}^2.
\end{align*}
Note that \eqref{eq:strongzkbound} holds here because \eqref{eq:restrictedstrongstepsize} guarantees \eqref{eq:sigmabetarange} with some $\beta>0$ and $\sigma\in (0,1)$. Then, by combining the above inequality and \eqref{eq:strongzkbound}, we obtain $(1\!-\!\delta)(\|\mathbf{z}^k\!-\!\mathbf{z}^\star\|_{\tilde{G}}^2\!+\!\rho\|\mathbf{x}^k\|_{\tilde{H}}^2)\!-\!(\|\mathbf{z}^{k+1}\!-\!\mathbf{z}^\star\|_{\tilde{G}}^2\!+\!\rho\|\mathbf{x}^{k+1}\|_{\tilde{H}}^2)\ge\|\mathbf{x}^k-\mathbf{x}^\star\|_{B_1(\delta)}^2+\|\mathbf{x}^k\|_{B_2(\delta)}^2+\|\mathbf{x}^{k+1}-\mathbf{x}^k\|_{B_3(\delta)}^2$. Note from \eqref{eq:sigmabetarange} that $2\eta m_\rho-\beta\Delta>0$ and $A_a\succ\mathbf{O}$. Also note that $\theta_1>0$, $\theta_2>0$, $\rho>0$, $\lambda_{\tilde{H}}>0$, $\eta\in(0,1)$, $\tilde{H}\succeq \mathbf{O}$, $\operatorname{Range}(\tilde{H})=\operatorname{Range}(H)=S^\bot$, and $\sigma\in(0,1)$. Therefore, there exists $\delta\in (0, 1)$ such that $B_i(\delta)\succeq \mathbf{O}$ $\forall i\in \{1,2,3\}$, which guarantees \eqref{eq:theononcompositeresult}.

\subsection{Proof of Theorem~\ref{thm:sublinearstronglyconvex}}\label{ssec:proofofnonsmoothstronglyconvex}

In the proof of Lemma~\ref{lemma:levelset}, we have shown that $(c^k)_{k=0}^\infty$ is non-increasing. Thus, $\|\mathbf{z}^k-\mathbf{z}^\star\|_{\tilde{G}}^2\le c^k\le c^0=d_0^2/\rho$. It follows that $\|\mathbf{v}^0-\mathbf{v}^k\|\le \|\mathbf{v}^0-\mathbf{v}^\star\|+\|\mathbf{v}^\star-\mathbf{v}^k\|\le \|\mathbf{v}^0-\mathbf{v}^\star\|+\sqrt{\rho}\|\mathbf{z}^k-\mathbf{z}^\star\|_{\tilde{G}}\le \|\mathbf{v}^0-\mathbf{v}^\star\|+d_0$. By substituting this into \eqref{eq:consensuserrornonstronglyconvex} and \eqref{eq:general_funcvalge}, we obtain \eqref{eq:consensuserrornonsmoothstrongconvex} and \eqref{eq:objectiveerrorlowerboundnonsmoothstrongconvex}. Note that this substitution is legitimate, since \eqref{eq:consensuserrornonstronglyconvex} and \eqref{eq:general_funcvalge} do not rely on \eqref{eq:constantmatrix} to hold. To prove \eqref{eq:objectiveerrorupperboundnonsmoothstrongconvex}, we let $\tilde{J}^t(\mathbf{x})=u^t(\mathbf{x})+h(\mathbf{x})+\frac{\rho}{2}\|\mathbf{x}\|_H^2+(\mathbf{v}^t)^T\tilde{H}^{\frac{1}{2}}\mathbf{x}$ $\forall t\ge0$. Since $\nabla^2 u^t(\mathbf{x})\succeq A_\ell$ $\forall \mathbf{x}\in \mathbb{R}^{Nd}$, $\tilde{J}^t(\mathbf{x})-\|\mathbf{x}\|_{A_\ell}^2/2$ is convex. Similar to the derivation of \eqref{eq:constantmatrixxminimize},
\begin{equation}\label{eq:stronglyconvexJtJstar}
\tilde{J}^t(\mathbf{x}^{t+1})-\tilde{J}^t(\mathbf{x}^\star)\le -\|\mathbf{x}^{t+1}-\mathbf{x}^\star\|_{A_\ell}^2/2.
\end{equation}
Since $A_\ell\preceq \nabla^2 u^t(\mathbf{x})\preceq A_u$ $\forall \mathbf{x}\in \mathbb{R}^{Nd}$, we have $u^t(\mathbf{x}^{t+1})\ge u^t(\mathbf{x}^t)+\langle \nabla u^t(\mathbf{x}^t), \mathbf{x}^{t+1}-\mathbf{x}^t\rangle+\|\mathbf{x}^{t+1}-\mathbf{x}^t\|_{A_\ell}^2/2$ and $u^t(\mathbf{x}^\star) \le u^t(\mathbf{x}^t)+\langle \nabla u^t(\mathbf{x}^t), \mathbf{x}^\star-\mathbf{x}^t\rangle+\|\mathbf{x}^\star-\mathbf{x}^t\|_{A_u}^2/2$. These two inequalities along with $\nabla u^t(\mathbf{x}^t)=\nabla f(\mathbf{x}^t)$ imply
\begin{align}
&u^t(\mathbf{x}^{t+1})-u^t(\mathbf{x}^\star)\ge \langle \nabla f(\mathbf{x}^t), \mathbf{x}^{t+1}-\mathbf{x}^\star\rangle\nonumber\displaybreak[0]\\
&+\|\mathbf{x}^{t+1}-\mathbf{x}^t\|_{A_\ell}^2/2-\|\mathbf{x}^\star-\mathbf{x}^t\|_{A_u}^2/2.\label{eq:ut1minusustar}
\end{align}
Note that \eqref{eq:xk1tildeHxvk} and \eqref{eq:fxk1minusfstarupperbound} hold in no need of \eqref{eq:constantmatrix}. Hence, by integrating \eqref{eq:stronglyconvexJtJstar}, \eqref{eq:ut1minusustar}, \eqref{eq:xk1tildeHxvk}, \eqref{eq:fxk1minusfstarupperbound}, and $H\mathbf{x}^\star\!=\!\tilde{H}^{\frac{1}{2}}\mathbf{x}^\star\!=\!\mathbf{0}$, $f(\mathbf{x}^{t+1})\!+\!h(\mathbf{x}^{t+1})\!-\!f(\mathbf{x}^\star)\!-\!h(\mathbf{x}^\star)\!+\!\frac{\|\mathbf{v}^{t+1}\|^2-\|\mathbf{v}^t\|^2}{2\rho}\le\frac{\|\mathbf{x}^t-\mathbf{x}^\star\|_{A_u}^2}{2}-\frac{\|\mathbf{x}^{t+1}-\mathbf{x}^\star\|_{A_\ell}^2}{2}+\frac{\|\mathbf{x}^{t+1}-\mathbf{x}^t\|_{\Lambda_M-A_\ell}^2}{2}$. Similar to the derivation of \eqref{eq:general_funcvalle} from \eqref{eq:nonstrongfplushoptimaldistance}, it follows that $f(\bar{\mathbf{x}}^k)+h(\bar{\mathbf{x}}^k)-f(\mathbf{x}^\star)-h(\mathbf{x}^\star)\le\frac{1}{k}\bigl(\frac{\|\mathbf{v}^0\|^2}{2\rho}+\frac{\|\mathbf{x}^0-\mathbf{x}^\star\|_{A_u}^2}{2}\bigr)+\frac{1}{k}\sum_{t=0}^{k-1}\frac{\|\mathbf{x}^{t+1}-\mathbf{x}^t\|_{\Lambda_M-A_{\ell}}^2}{2}+\frac{\Delta}{2k}\sum_{t=1}^{k-1}\|\mathbf{x}^t-\mathbf{x}^\star\|^2$. From \eqref{eq:strongzkbound}, $\sum_{t=0}^{k-1}\|\mathbf{x}^{t+1}\!-\!\mathbf{x}^t\|_{A_a}^2\le\frac{c^0-c^k}{1-\sigma}\le\frac{d_0^2}{\rho(1-\sigma)}$ and $\sum_{t=1}^{k-1}\|\mathbf{x}^t-\mathbf{x}^\star\|^2\le\frac{c^1-c^k}{2\eta m_\rho-\beta\Delta}\le\frac{d_0^2}{\rho(2\eta m_\rho-\beta\Delta)}$, where $\beta>0$ and $\sigma\in(0,1)$ satisfying \eqref{eq:sigmabetarange} exist due to \eqref{eq:restrictedstrongstepsize}. Also, from \eqref{eq:restrictedstrongstepsize}, $\Lambda_M-A_\ell\preceq \Lambda_M\preceq 2(1-\eta)A_a$. Combining the above gives \eqref{eq:objectiveerrorupperboundnonsmoothstrongconvex}.

\bibliography{reference}

\bibliographystyle{ieeetran}

\begin{IEEEbiography}
[{\includegraphics[width=1in,height=1.25in,clip,keepaspectratio]{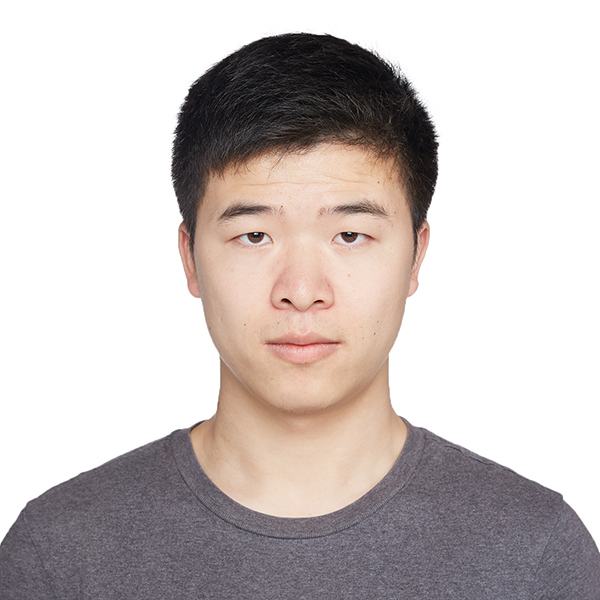}}]{Xuyang Wu}
(SM'17) received the B.S. degree in Information and Computing Science from Northwestern Polytechnical University, Xi'an, China, in 2015, and the Ph.D. degree from the School of Information Science and Technology at ShanghaiTech University, Shanghai, China, in 2020. He is currently a postdoctoral researcher in the Division of Decision and Control Systems at KTH Royal Institute of Technology, Stockholm, Sweden. His research interests include distributed optimization, large-scale optimization, and their applications in IoT and power systems.
\end{IEEEbiography}
	
\begin{IEEEbiography}[{\includegraphics[width=1in,height=1.25in,clip,keepaspectratio]{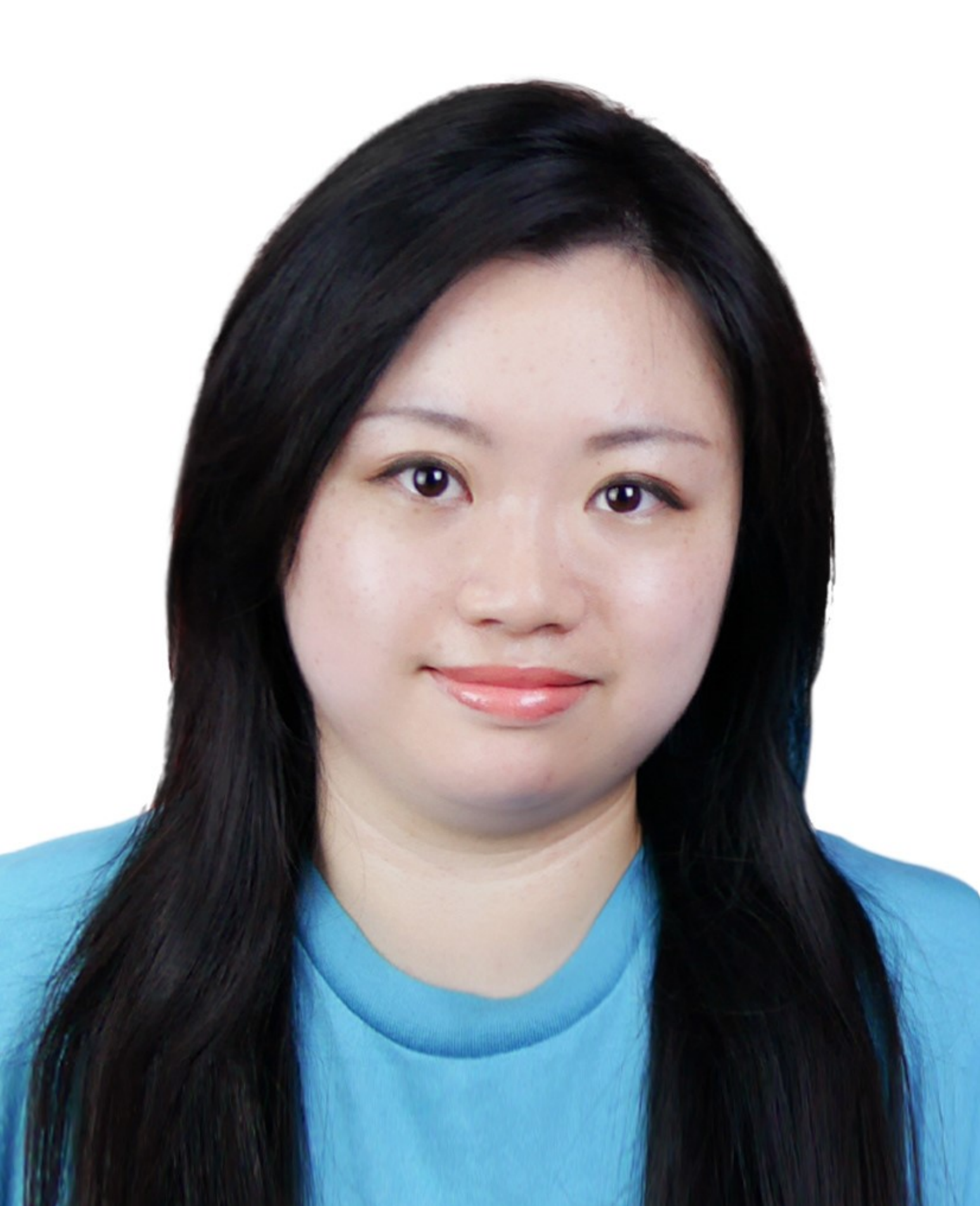}}]{Jie Lu} (SM'08-M'13)
received the B.S. degree in Information Engineering from Shanghai Jiao Tong University, China, in 2007, and the Ph.D. degree in Electrical and Computer Engineering from the University of Oklahoma, USA, in 2011. From 2012 to 2015 she was a postdoctoral researcher with KTH Royal Institute of Technology, Stockholm, Sweden, and with Chalmers University of Technology, Gothenburg, Sweden. Since 2015, she has been an assistant professor in the School of Information Science and Technology at ShanghaiTech University, Shanghai, China. Her research interests include distributed optimization, optimization theory and algorithms, and networked dynamical systems.
\end{IEEEbiography}

\end{document}